\DeclareFontFamily{U}{txsyc}{}
\DeclareFontShape{U}{txsyc}{m}{n}{
   <-> txsyc%
}{}
\DeclareFontShape{U}{txsyc}{bx}{n}{
   <-> txbsyc%
}{}
\DeclareFontShape{U}{txsyc}{l}{n}{<->ssub * txsyc/m/n}{}
\DeclareFontShape{U}{txsyc}{b}{n}{<->ssub * txsyc/bx/n}{}
\DeclareSymbolFont{symbolsC}{U}{txsyc}{m}{n}
\DeclareMathSymbol{\df}{\mathrel}{symbolsC}{"42}
\DeclareMathSymbol{\fd}{\mathrel}{symbolsC}{"43}
\DeclareMathSymbol{\lJoin}{\mathrel}{symbolsC}{"58}
\DeclareMathSymbol{\rJoin}{\mathrel}{symbolsC}{"59}
\newcommand{\cB}{{\cal B}}
\newcommand{\cpn}{{\check{p}_n}}
\newcommand{\dnj}{\delta_{n,j}}
\newcommand{\dnl}{\delta_{n,l}}
\newcommand{\EE}{\mathbb{E}}
\newcommand{\Hnu}{\mathbf{H}_{\nu}^{+}}
\newcommand{\Hs}{\mathbf{H}_{s}(R)}
\newcommand{\LL}{\mathbb{L}}
\newcommand{\NN}{\mathbb{N}}
\newcommand{\PP}{\mathbb{P}}
\newcommand{\hPZ}{\hat{\Phi}_{Z,n}}
\newcommand{\hpz}[1]{\hat{\Phi}_{Z,n}^{(#1)}}
\newcommand{\PZ}{\Phi_Z}
\newcommand{\PU}{\Phi_U}
\newcommand{\PX}{\Phi_X}
\newcommand{\hPXn}[1]{\hat{\Phi}_{X_{n,#1}}^{(2)}}
\newcommand{\hPXnj}{\hat{\Phi}_{X_{n,j}}^{(2)}}
\newcommand{\hPXnl}{\hat{\Phi}_{X_{n,l}}^{(2)}}
\newcommand{\PK}{\Phi_K}
\newcommand{\hpjs}{\hat{p}_{n,j^\star}}
\newcommand{\hpln}{\hat{p}^{(1)}_{n,l}}
\newcommand{\hjp}{\hat{j}_n^p}
\newcommand{\hpnj}{\hat{p}_{n,j}}
\newcommand{\hpjn}{\hat{p}_{n,\hjp}}
\newcommand{\hjf}{\hat{j}_n^f}
\newcommand{\hfjn}{\hat{f}_{n,\hjf}}
\newcommand{\hfnj}{\hat{f}_{n,j}}
\newcommand{\PQ}{\Phi_Q}
\newcommand{\RR}{\mathbb{R}}
\newcommand{\ii}{\mathfrak{i}}
\newcommand{\me}{\medskip}
\newcommand{\un}{\mathds{1}}
\newcommand{\bq}{\begin{eqnarray*}}
\newcommand{\bqn}[1]{\begin{eqnarray}\label{#1}}
\newcommand{\eq}{\end{eqnarray*}}
\newcommand{\eqn}{\end{eqnarray}}
\newcommand{\thistitlepagestyle}{}
\newcommand{\ttsim}{\raise.17ex\hbox{$\scriptstyle\mathtt{\sim}$}}
\newcommand{\VV}{\mathbb{V}}
\newtheorem{pro}{Proposition} 
\newtheorem{lem}[pro]{Lemma}
\newtheorem{theo}[pro]{Theorem}
\renewcommand{\thepro}{\arabic{pro}}
\newenvironment{rem}
{\par\me\refstepcounter{pro}\noindent{\bf Remark \thepro\ }}
{\par\hfill \par\me\noindent}
\newcommand{\ben}{\vspace{0mm}\begin{equation}}
\newcommand{\een}{\vspace{0mm}\end{equation}}
\newcommand{\be}{\vspace{0mm}\begin{equation*}}
\newcommand{\ee}{\vspace{0mm}\end{equation*}}
\newcommand{\bea}{\vspace{0mm}\begin{equation*}\begin{aligned}}
\newcommand{\eea}{\vspace{0mm}\end{aligned}\end{equation*}}
\newcommand{\bean}{\vspace{0mm}\begin{equation}\begin{aligned}}
\newcommand{\eean}{\vspace{0mm}\end{aligned}\end{equation}}
\newcommand{\E}{\mathbb{E}}
\newcommand{\Pro}{\mathbb{P}}
\newcommand{\R}{\mathbb{R}}
\newcommand{\abso}[1]{\bigl| #1\bigr|}
\title{Cytometry inference through adaptive atomic deconvolution}
\author[1]{Manon Costa }
\author[2]{Sébastien Gadat}
\author[3]{Pauline Gonnord}
\author[1]{Laurent Risser}
\affil[1]{\small{
Institut de Mathématiques de Toulouse, UMR 5219\\
Université de Toulouse III, France.
} }
\affil[2]{Toulouse School of Economics, UMR 5604\\
Université de Toulouse 1,  France.}
\affil[3]{Centre de Physiopathologie Toulouse Purpan (CPTP), INSERM UMR1043, CNRS UMR 5282. Université Toulouse III, France.}
\date{}
\begin{document}

\setbox3=\vbox{
\vskip2mm
\hbox{${}^\dagger$ Institut de Mathématiques de Toulouse\\}
\hbox{Université Toulouse 3 Paul Sabatier\\}
\hbox{118, route de Narbonne\\} 
\hbox{31062 Toulouse Cedex 9, France\\}
\vskip2mm
\hbox{$^\ddagger$Toulouse School of Economics\\}
\hbox{Université Toulouse 1 Capitole \\}
\hbox{21, allées de Brienne\\} 
\hbox{31000 Toulouse, France\\}

\vskip2mm
\hbox{${}^{\ddagger\ddagger}$ Centre de Physiopathologie Toulouse Purpan (CPTP), \\}
\hbox{INSERM UMR1043, CNRS UMR 5282, \\}
\hbox{Université Toulouse III, France\\} 
\hbox{CHU Purpan, 
BP 3028, 31024 Toulouse CEDEX 03, France\\}
\vskip2mm
\hbox{\{costa${}^\dagger$,gadat$^\ddagger$,risser${}^\dagger$\}@math.univ-toulouse.fr\\}
\hbox{pauline.gonnord${}^{\ddagger\ddagger}$@inserm.fr\\}
}
\setbox5=\vbox{
\box3
}

\maketitle
\thistitlepagestyle
\abstract{ In this paper we consider a statistical estimation problem known as atomic deconvolution. Introduced in reliability, this model has a direct application when considering biological data produced by flow cytometers. In these experiments, biologists measure the fluorescence emission of treated cells and compare them with their natural emission to study the presence of specific molecules on the cells' surface. They observe a signal which is composed of a noise (the natural fluorescence) plus some additional signal related to the quantity of molecule present on the surface if any. From a statistical point of view, we aim at inferring the percentage of cells expressing the selected molecule and the probability distribution function associated with its fluorescence emission. We propose here an adaptive estimation procedure based on a previous deconvolution procedure introduced by  \cite{vanes1,vanes2}. For both estimating the mixing parameter and the mixing density automatically, we use the Lepskii method based on the optimal choice of a bandwidth using a bias-variance decomposition. We then derive some concentration inequalities for our estimators and obtain the convergence rates, that are shown to be minimax optimal (up to some $\log$ terms) in Sobolev classes. Finally, we apply our algorithm on simulated and real biological data.
}
\bigskip\\

{\small
\textbf{Keywords}: Mixture models, Atomic deconvolution, Adaptive kernel estimators, Inverse problems .\\
\vskip.3cm
\textbf{MSC2010:} Primary: 62G07. Secondary: 	62G20, 62P10
}

\section{Introduction}

\subsection{Motivation}
This paper deals with a statistical estimation problem close to the standard deconvolution problem in density estimation, and known as the problem of \textit{atomic deconvolution}. This problem has been recently introduced in \cite{vanes1} and motivated by a reliability problem estimation. We consider in this work a natural application of this model to the biologocial datasets produced by flow cytometers. To motivate our theoretical and practical study, we have chosen to first introduce a common problem that should be addressed by biologist researchers when using flow cytometry measurements. A flow cytometer is an electronic machine that makes it possible to analyse a large number of cells and produce cell engineering results such as counting, sorting or bio-marker detection. 
This technology is commonly used for measuring the expression levels of proteins on the cells' surface. To this aim, the biologist use fluorescent antibodies which bind with specific proteins on the surface of cells. The flow cytometer then measures on each cells the fluorescent intensity, which is a reflect of the quantity of marker expressed by the cell. More precisely these measurements derive from a standard procedure in flow cytometry :
\begin{itemize}
\item First the biologist performs a \textit{calibration} that corresponds to the preliminary estimation of the baseline population of cells without any treatment: a large number of cells is placed in the cytometer and a fluorescent intensity is measured on each cell. This calibration ends with a baseline estimation of the the baseline photon emission by a population of cells.
\item Then the biologist exposes the cells to a fluorescent antibody, which binds with the marker of interest on the surface of cells. The new fluorescence empirical distribution is built by the cytometer.
\item The expertise of the biologist is at last used to calibrate a qualitative analysis to decide if there is an effect (or not) of the treatment and what is its mean effect.
We should mention that this human expertise may be also assisted by some recent statistical computational analyses and have resulted in an open project called \textit{FlowCAP} that makes it possible to produce standard statistical analysis (data clustering for example).
\end{itemize}
%
%
Figure  \ref{fig:cyto_exemple} is an illustration of the empirical distribution of flurescence measured by a cytometer before and after a treatment on different kind of cells with a logarithmic scale.
\begin{figure}
\hfill
\begin{minipage}{0.45\linewidth}
\centering
\includegraphics[scale=0.5]{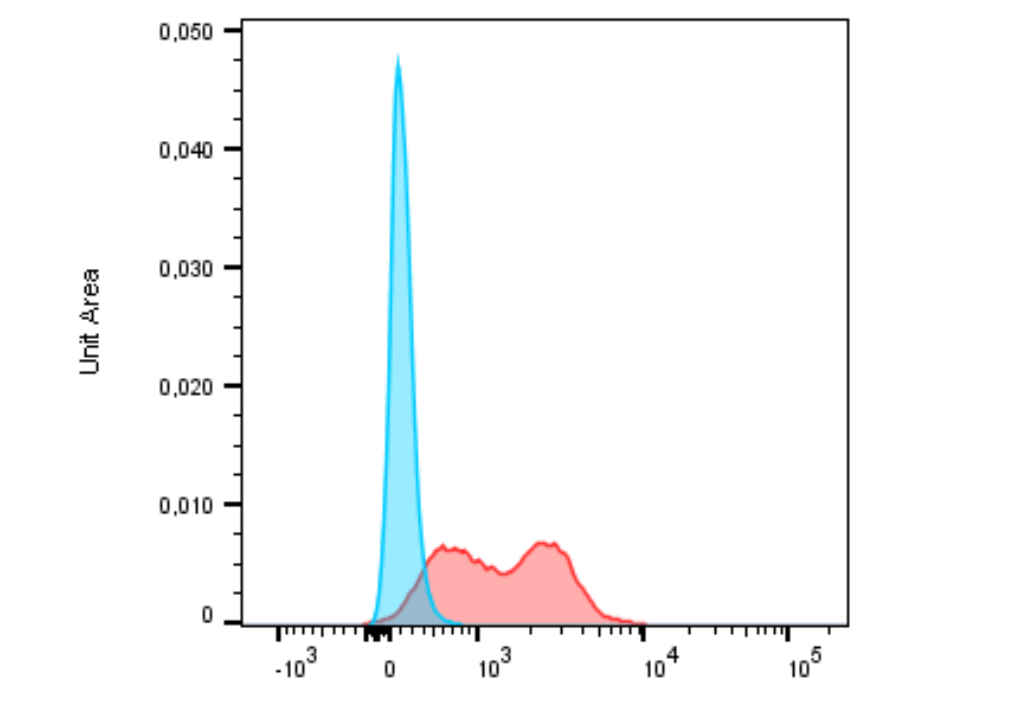}\\
\small{(a)}
\end{minipage}\hfill
\begin{minipage}{0.45\linewidth}
\centering
\includegraphics[scale=0.5]{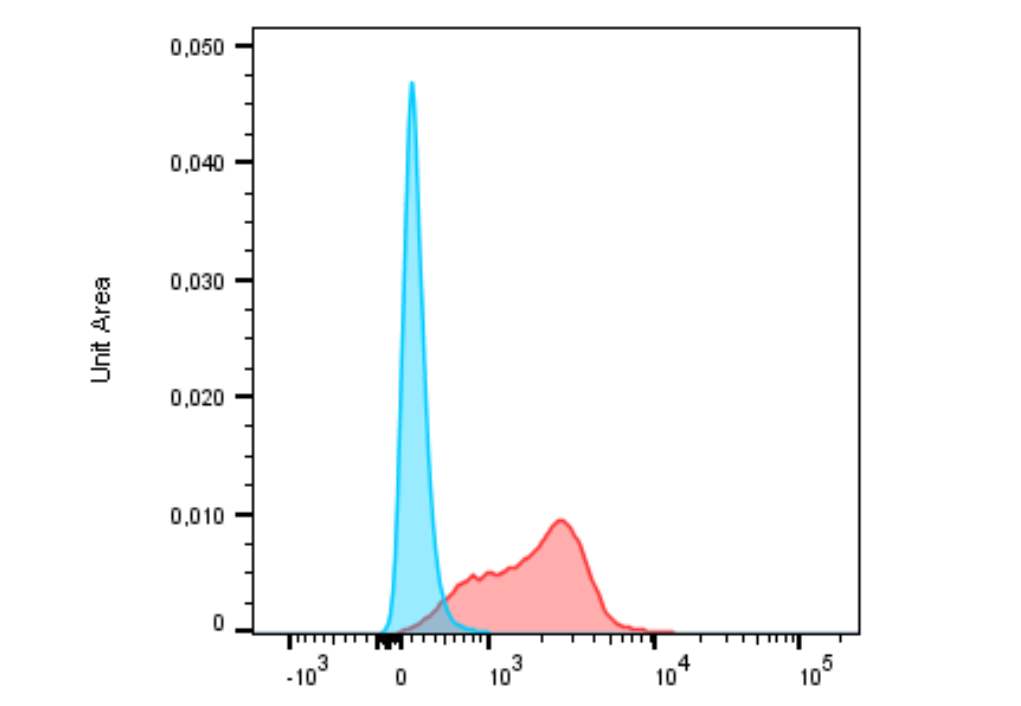}\\
\small{(b)}
\end{minipage}
\hfill

\hfill
\begin{minipage}{0.45\linewidth}
\centering
\includegraphics[scale=0.5]{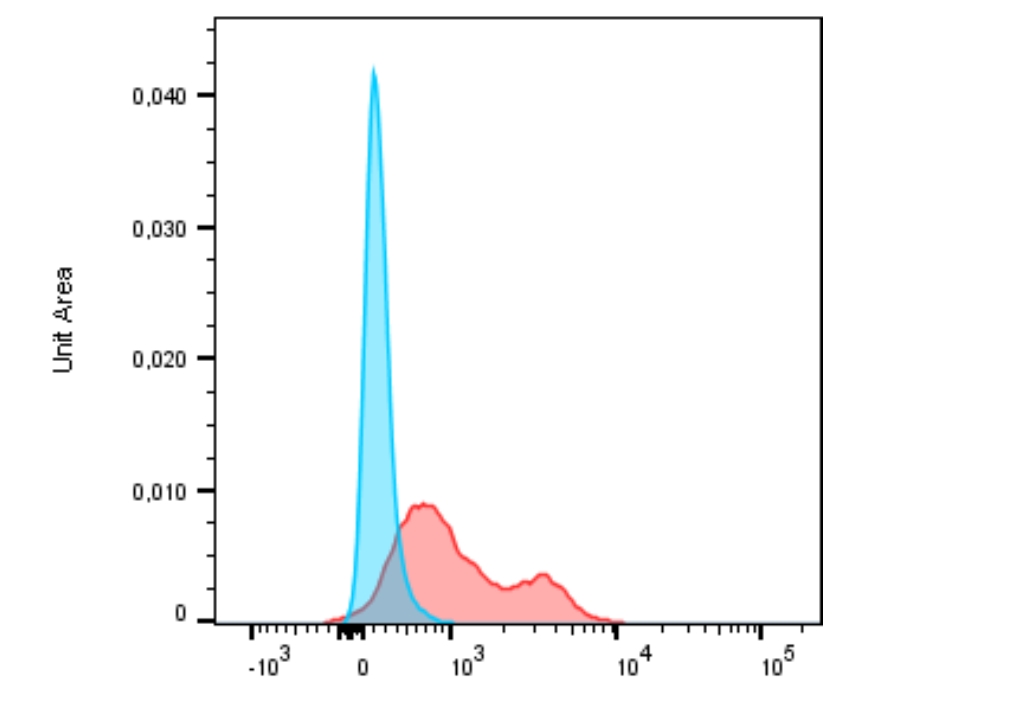}\\
\small{(c)}
\end{minipage}
\hfill
\begin{minipage}{0.45\linewidth}
\centering
\includegraphics[scale=0.5]{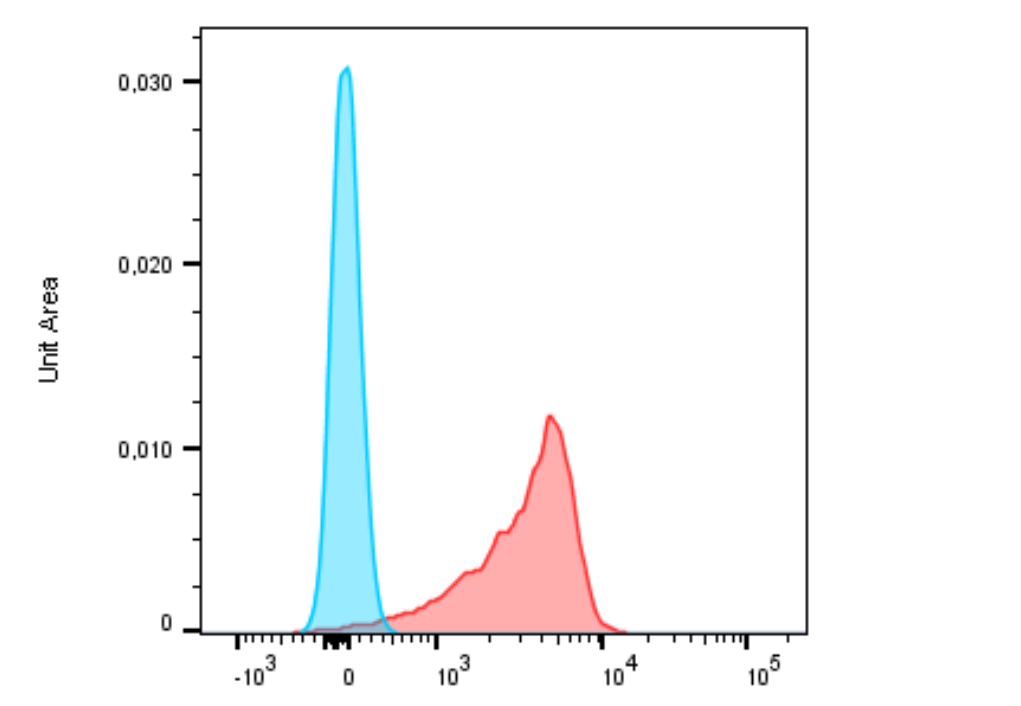}\\
\small{(d)}
\end{minipage}
\hfill
\caption{Effect of binding of an antibody specific for the CD27 protein expressed at the cell surface of lymphocytes extracted from the blood of an healthy individual (four different samples). The blue histograms represent the distribution of the baseline photo emission of untreated cells while the red histograms gives the fluorescent distribution of treated cells. We refer to Section \ref{subsec:realdata} for more details on the samples\label{fig:cyto_exemple}.}
\end{figure}

\vspace{1em}
Our objective here is to enrich the possibilities of automatic estimation on flow cytometer datasets through a statistical procedure that permit to retrieve  both the proportion of positive cells (\textit{i.e.} the cells that express a given marker) and the distribution of the marker in a population of cells. 

\subsection{Model}\label{sec:model}

The above phenomenon can be described as the following atomic deconvolution problem: we observe some  i.i.d. realizations $(Z_i)_{1 \leq i \leq n}$ of a model given by:
\begin{equation}\label{eq:model}
Z=U+A X,
\end{equation}
where $U,A$ and $X$ are jointly independent random variables. In Equation \eqref{eq:model}, the random variable $Z$ is therefore the measurement of the flow cytometer of the fluorescent intensity on a given cell. The random variable $U$ represents the natural fluorescence of the cell: it is its baseline reaction regardless the impact of the drug administrated during the experiment. The random variable $X$ stands for the quantitative effect of the reagent when the cell actively reacts to the treatment. At last, $A$ represents the effectiveness of the treatment on the cell. We can therefore list below our definitions related to \eqref{eq:model}:

\begin{itemize}
\item $U$ represents the baseline ``noise" on the observations, leading to the convolution model. We assume that the distribution of $U$ is absolutely continuous with respect to the Lebesgue measure  on $\RR$ and the density of $U$ is assumed to be \textbf{known} and is denoted by $g$. This assumption for example translates a preliminary step of calibrating the cytometer before any treatment with a non-parametric estimation of the density $g$.
\item $A$ follows a Bernoulli distribution $A \sim \cB(1-p)$ and $p$ is an \textbf{unknown} parameter. We use the convention $\PP[A=0]=p$ (the cell does not react to the administrated reagent) while $\PP[A=1]=1-p$ (the reagent induces an effect on the cell). 
\item $X$ is the effect of the treatment when the cell is reacting to the treatment and we assume that the distribution of $X$ is also absolutely continuous with respect to the Lebesgue  measure on $\RR$, with an \textbf{unknown} density $f$. We also aim to estimate $f$ to quantify the effect of the treatment.
\end{itemize}

For example, in each situation illustrated in Figure \ref{fig:cyto_exemple}, we can see that  the baseline density $g$ is represented by the blue area while the ``mixture" distribution obtained after the statistical contamination of $U$ by $A X$ is represented by the red area.

Even though relatively simple in appearance, we will see below that Equation \eqref{eq:model} deserves both theoretical development and numerical efforts to obtain a practical and  theoretically supported method of estimation for $p$ and $f$. For example, we could think of the use of a moment estimation strategy to obtain informations on $p$. Unfortunately, we can rapidly compute that
$$
\mathbb{E} (Z) = \mathbb{E}( U) + (1-p) \mathbb{E} (X) \qquad \text{and} \qquad \mathbb{V}(Z) =  \mathbb{V}(U)+ p(1-p) \mathbb{E} (X)^2 + (1-p)\mathbb{V} (X), 
$$
leading to a non trivial relationship between the known moments of the distribution of $U$, the Bernoulli parameter $p$, and the moments of the distribution of $X$. This difficulty has been partially solved with the help of a non-parametric deconvolution strategy in \cite{vanes1,vanes2} adapted to the situation of atomic deconvolution. This estimator is inspired from the seminal contributions of \cite{Carroll} and \cite{Fan}.
In particular,  $\cite{vanes1}$ builds an ``ideal" consistent estimator and proves a CLT related to this estimator while \cite{vanes2} studies the associated convergence rates and their minimax properties. 
Unfortunately, both works do not address the important question of adaptivity to the smoothness class $\Hs$ of the unknown density $f$(see below for details). The choice of the ``good" smoothness parameter $s$ may be a real issue from a practical point of view. In the general settings of non parametric regression, this adaptivity property may be attained by different strategies.

\subsection{Adaptation}

Hence, the method proposed in these works require an important improvement to produce a fully adaptive estimation strategy, and therefore to obtain a tractable algorithm for the estimation of $p$ and $f$. Adaptivity of estimators is a very desirable property: in non-parametric statistics, estimators are importantly affected by a wrong choice of the parameters that are ideally smoothness-dependent. This is in particular the case for the bandwidth of a kernel deconvolution (see \cite{Fan}) or a frequency threshold in a Tychonov-type regularization (see \cite{Tsybakov_book}) for example. Different strategies have been designed these last decades to attain \textit{ad-hoc}  adaptivity of estimators. Among them, model selection theory introduced initially in \cite{BM} produced a lot of interesting derivations in non-parametric and high-dimensional statistics, while the cornerstone of a tractable use of such an approach relies on a suitable minimal penalty calibration. Nevertheless, this calibration is sometimes difficult  both from a theoretical and from a practical point of view and other methods can  also be used to produce efficient adaptive estimators. For example, resampling methods with an additional cross-validation procedure (see \textit{e.g.} \cite{cv}) is a popular method but we should notice that deriving theoretical results with this simple method requires a significant amount of work (see the recent contributions of \cite{arlot1} for density estimation and of  \cite{arlot2} on general resampling methods).
In statistical signal processing, adaptivity of estimation may also be achieved with the help of a suitable decomposition basis and thresholding strategy. This is what is done with wavelets on Besov balls on specific situations: introduced for density estimation in \cite{djkp} and used in different type of inverse problems in \cite{BG10,BG2} for example. Nevertheless, wavelets are not so easy to use with mixture problems  and  we propose to follow another possible guideline. We will use the so-called Lepskii method to derive adaptive estimators. This method introduced in \cite{L92} has been successfully applied in many non-parametric estimation problems and we refer to \cite{LG11},\cite{L15} for recent contributions using this method. We also refer to \cite{Chichignoud} for an introduction of this method in different frameworks. The success of this method relies on a good bias-variance trade-off in the estimation procedure, and does not require a too much involved parameters tuning step.

\subsection{Main assumptions}
 Below, we will use  $\ii$ to refer to the complex number such that $\ii^2=-1$ and the notation ``$:=$" will refer to the definition of a mathematical object. We assume that the random variables $U$ and $X$ have a bounded second moment.
The characteristic function of any random variable $W$ will be denoted by $\Phi_{W}$. Therefore, we will frequently use the following notations:

$$
\forall t \in \RR \qquad 
\PZ(t) := \mathbb{E}[e^{\mathfrak{i} t Z}] \qquad\text{and} \qquad \PU(t) := \mathbb{E}[e^{\mathfrak{i} t U}] 
 \qquad \text{and} \qquad \PX(t) := \mathbb{E}[e^{\mathfrak{i} t X}],
$$
where $(Z,U,X)$ are the random variables involved in Equation (\ref{eq:model}). We assume that $U$ (resp. $X$) has a known density $g$ (resp. unknown density $f$) with respect to the Lebesgue measure. We will use for the sake of convenience the notation $\Phi_h$ to refer to the Fourier transform of any density $h$:
$$
\Phi_h(t) = \int_{\RR} e^{\ii t x} h(x) dx.
$$
The notation $\lesssim$ refers to an inequality up to a multiplicative constant independent of $n$.

\vspace{1em}

It is well known that the Fourier transform of $Z$ and its empirical counterpart may be used to obtain reliable estimations of $\PX$ when dealing with a standard convolution inverse problem (see \cite{Fan}) at the price of an assumption on the deconvolution operator translated in $\PU$. Moreover, let us note that our model includes the situation where $p=0$, which turns our atomic deconvolution problem into the standard deconvolution problem. Therefore, it is expected that our estimation problem can be solved at the minimal price of some standard smoothness and invertibility conditions on the Fourier transform of $U$ and some smoothness assumptions on $f$. Indeed, these kind of assumptions are well known in the inverse problem litterature and commonly used to derive convergence rates of estimators (see among many references the work \textit{e.g.} \cite{FanKoo} where this assumption is used in its great generality, or \cite{Cavalier} where this assumption is specified in the Poisson inverse problem situation).
We also refer to \cite{BG10,BCG} for other applications of this kind of assumptions with different ``deconvolution $+$ mixture problems". We are therefore driven to introduce two sets of densities $\Hs$ and $\Hnu$:

\vspace{0.5em}
\noindent
$\bullet$ The set $\Hs$ denotes the set of densities $f$ that belong to the Sobolev space of regularity $s$ (and radius $R$) described with the help of the associated characteristic functions  $\Phi_f$ such that:
$$
\Hs := \left\{ h \in \mathbb{L}^2(\RR) \, : \,  \int_{\RR} |\Phi_h(t)|^2 (1+|t|^{2s}) dt \leq R^2 \right\}
$$
Below, the density of $X$ denoted by $f$ is assumed to belong to $\Hs$ with an \textit{unknown} parameter $s$, meaning that we assume that
 $ \int_{\RR} |\PX(t)|^2 (1+|t|^{2s}) dt \leq R^2 .$

\vspace{0.5em}
\noindent
$\bullet$ The set $\Hnu$ denotes the set of densities such that the Fourier transform satisfies the smoothness and ``invertibility" condition :
$
 \exists \, (d_0,d_1) \, : 0<d_0<d_1$ and $d_0 |t|^{-\nu} \leq   |\Phi(t)| \leq d_1 |t|^{-\nu}$  as $|t| \longrightarrow + \infty.$

$$
\Hnu := \left\{ h \in \mathbb{L}^2(\RR) \, : \,   \exists \, 0<d_0<d_1  \quad d_0 |t|^{-\nu} \leq   |\Phi_h(t)| \leq d_1 |t|^{-\nu}\right\}.
$$
Below, we will assume that the density $g$ of the random variable $U$ belongs to $\Hnu$.\\

\noindent Let us briefly comment on this last assumption.
In \cite{Fan},  $\Hnu$ refers to the set of ordinary smooth densities of order $\nu$, which includes many distributions as gamma, double exponential distributions. Note that it would be possible to address the super-smooth case with an exponential decrease of the Fourier transform to handle Gaussian or Cauchy densities. Nevertheless, we have chosen in this work to restrict our study to the ordinary smooth case for the sake of brievety.
As pointed in Section \ref{sec:model}, we assume that $g$ is known, meaning that $\PU$ is known on $\RR$. Such an assumption is reasonnable regarding the practical example we want to handle where we can repeat the experiments for the calibration of the cytometer many times.\\

The estimators introduced in \cite{vanes1} exploit the knowledge of $\Hnu$ and of $\Hs$ (which is more anoying for practical purpose) to obtain an optimal estimator with the important assumption of the knowledge of $s$ (see Section \ref{sec:nonadap}).
\subsection{Main results}
In this article we introduce an adaptive procedure to estimate the unknown parameters $p$ and $f$. We consider the estimators $\hat{p}_n$ and $\hat{f}_n$ proposed in \cite{vanes1} based on a kernel estimation using the relationship between the Fourier transform of each random variables because Equation \eqref{eq:model} yields:
\begin{equation}\label{eq:Fourier}
\forall t \in \RR \qquad \PZ(t) = \PU(t)\left[p+(1-p) \PX(t)\right].
\end{equation}
We detail in Section \ref{subsec:non_ada_p} and \ref{subsec:non_ada_f} the non-adaptive construction proposed by \cite{vanes1}. We then develop a well-designed strategy (see \textit{e.g.} \cite{L92}) to obtain an adaptive estimator of $p$ and $f$. This method allows for choosing among a grid of regularity, the associated bandwidth parameter that realizes the bias-variance tradeoff.
The precise construction of the adaptive esitmator $\hpjn$ and $\hfjn$ are given in Section \ref{subsec:lepskip} and \ref{subsec:lepskif}. 
The aim of this article is to establish the consistency rate of our adaptive strategy. More precisely, we will prove the following results. The first result concerns the estimation of $p$.
\begin{theo}[Minimax adaptivity of $p$]\label{theo:lepski_p}
Assume that $f$ belongs to $\Hs$ and $g$ to $\Hnu$ with $\nu > 1$, if $p \in (0,1)$, then the estimator $\hpjn$ defined in \eqref{eq:pnj} and \eqref{def:j_chap_p} satisfies:
$$
\E \left( \hpjn-p \right)^2 \lesssim \left(\log n\right) n^{-(2s+1)/(2s+2\nu)}.
$$
\end{theo}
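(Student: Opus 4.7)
The plan is to combine the non-adaptive bias--variance analysis of the family $(\hpnj)_j$ (from \cite{vanes1,vanes2} and Section~\ref{subsec:non_ada_p}) with a Lepskii-type oracle inequality for the data-driven selector $\hjp$. For a dyadic grid of cut-off frequencies $T_j \asymp 2^j$, the non-adaptive analysis yields a bias $B_j^2 \lesssim T_j^{-(2s+1)}$ (from $f\in\Hs$, the identity \eqref{eq:Fourier}, and Cauchy--Schwarz applied to the high-frequency tail of $\PX$) and a variance proxy $V_j/n \lesssim T_j^{2\nu-1}/n$ (from the lower bound $|\PU(t)| \gtrsim |t|^{-\nu}$ granted by $g\in\Hnu$). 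Balancing these produces the oracle cut-off $T_{j^\star} \asymp n^{1/(2s+2\nu)}$ and the target rate $n^{-(2s+1)/(2s+2\nu)}$; the adaptive goal is to realize it, up to a $\log n$ factor, without knowledge of $s$.

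The first step is a Bernstein-type concentration inequality. Writing $\hpnj$ as an average of i.i.d.\ complex summands $\xi_{k,j}$ built from $e^{\mathfrak{i} tZ_k}/\PU(t)$ integrated against a frequency weight of scale $T_j$, and using $\|\PU^{-1}\|_{\infty,[-T_j,T_j]} \lesssim T_j^\nu$ as the sup-bound together with the variance computation above, I expect to establish
\begin{equation*}
\PP\bigl(|\hpnj - \E\hpnj| \geq x\bigr) \;\leq\; 2\exp\!\left(-\frac{c_1 n x^2}{V_j + c_2 T_j^\nu x}\right).
\end{equation*}
The assumption $\nu > 1$ enters precisely here, to guarantee finiteness of the relevant integrals of $|\PU|^{-2}$ on bounded windows and to legitimate the Cauchy--Schwarz step controlling the bias.

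The second step is the Lepskii selection. Set $\tau_j := \kappa \sqrt{V_j \log(n)/n}$ for a sufficiently large constant $\kappa$, define
\begin{equation*}
\hjp := \min\bigl\{\, j \st \forall k \geq j,\; |\hpnj - \hat p_{n,k}| \leq \tau_k \,\bigr\},
\end{equation*}
and introduce the event $\cE_n := \bigcap_k \bigl\{\,|\hat p_{n,k} - \E \hat p_{n,k}| \leq \tau_k/2\,\bigr\}$. A union bound over the $O(\log n)$ grid points, combined with the concentration of the first step, gives $\PP(\cE_n^c) \lesssim n^{-\alpha}$ for $\kappa$ large enough. On $\cE_n$, the standard Lepskii argument (cf.\ \cite{L92,Chichignoud}) yields $|\hat p_{n,\hjp} - p| \lesssim B_{j^\star} + \tau_{j^\star}$ for any oracle index $j^\star$ satisfying $B_{j^\star}^2 \leq \tau_{j^\star}^2/4$. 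Squaring, taking expectations, and absorbing the contribution of $\cE_n^c$ via the trivial bound $(\hat p_{n,\hjp} - p)^2 \leq 1$ (after truncating the estimator to $[0,1]$) produces the announced rate.

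The main obstacle is the Bernstein step: since $1/\PU(t)$ blows up as $|t|\to\infty$, one must carefully control both the $L^\infty$ and $L^2$ norms of the integrand in terms of $T_j$ in order to recover the sharp variance $V_j \asymp T_j^{2\nu-1}$ promised by the non-adaptive analysis, rather than a coarser $T_j^{2\nu}$. Once this sharp concentration inequality is in place, the Lepskii manipulation is essentially mechanical, and the hypothesis $p \in (0,1)$ only enters to guarantee that the family $(\hpnj)_j$ is genuinely informative (avoiding the degenerate boundary regimes of the Bernoulli parameter).
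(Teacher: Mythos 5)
Your proposal follows essentially the same route as the paper: a geometric family of bandwidths indexed by a logarithmic grid, a Bernstein-type concentration bound on the centered kernel sums $\xi_{k,j}$ exploiting the sharp variance $V_j \asymp T_j^{2\nu-1}$ and the sup-bound $T_j^\nu$, a Lepskii pairwise-comparison rule with threshold $\kappa_{n,j} \asymp \sqrt{\log n}\,\sqrt{V_j/n}$, and a union bound over the $O(\log n)$ grid points absorbed via the trivial bound $(\hpjn-p)^2\le 1$. The only cosmetic differences are that the paper parameterizes the grid directly in the smoothness $s_j$ (which yields the same geometric bandwidth family) and works with the sum of individual deviation probabilities $\PP(|\hat p_{n,l}-p|\ge\kappa_{n,l}/2)$ rather than the single good event $\cE_n$, a trivial repackaging.
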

The proof of Theorem \ref{theo:lepski_p} follows  a multiple-testing strategy jointly used with a concentration inequality. The additional log term (regarding the minimax rate $n^{-(2s+1)/(2s+2\nu)}$) involved in our upper bound is the price to pay for using a multiple testing strategy and identify the smoothness parameter.\medskip

\noindent
Then we use a plug-in strategy to estimate the function $f$ from its Fourier transform. 
\begin{theo}[Minimax adaptivity of $f$]\label{theo:lepski_f}
For any $p\in(0,1)$, assume that $Z$ has a finite variance and assume that $f \in \Hs$ and $g \in \Hnu$  with $\nu>1$ and  select $\hjf$ as in Equation \eqref{eq:defln}. Then, the estimator $\hat{f}_{n,\hjf}$ defined in \eqref{def:h_f_h_j} satisfies:
$$
\E \|\hat{f}_{n,\hjf} - f \|_2 \lesssim 
  (\log n)^{a+1/2} \, n^{-s/(2s+2\nu+1)},
$$
where $a>0$ is defined in Equation \eqref{eq:taun}.
\end{theo}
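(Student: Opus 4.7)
The plan is to reformulate the $L^2$ error on $f$ as an $L^2$ error on its Fourier transform through Parseval, perform the classical bias/variance decomposition at a deterministic cutoff indexed by $j$, and then use a Lepskii-type oracle inequality to pay only a logarithmic price for the unknown smoothness $s$. The estimator $\hat f_{n,j}$ is a Fourier-type plug-in of the form
\be
\Phi_{\hat f_{n,j}}(t)=\frac{1}{1-\hpjn}\left(\frac{\hPZ(t)}{\PU(t)}-\hpjn\right)\ind{|t|\le\tau_j},
\ee
so, by Parseval,
\be
2\pi\,\|\hat f_{n,j}-f\|_2^2
\;\lesssim\;\int_{|t|\le\tau_j}\Bigl|\Phi_{\hat f_{n,j}}(t)-\PX(t)\Bigr|^2 dt
+\int_{|t|>\tau_j}|\PX(t)|^2 dt .
\ee

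For the bias term, the Sobolev condition $f\in\Hs$ gives $\int_{|t|>\tau_j}|\PX(t)|^2 dt\lesssim R^2 \tau_j^{-2s}$. For the variance, using $g\in\Hnu$, that is $|\PU(t)|\asymp |t|^{-\nu}$ at infinity, and splitting the plug-in error into
\be
\Phi_{\hat f_{n,j}}-\PX
=\frac{1}{1-p}\cdot\frac{\hPZ-\PZ}{\PU}
+\frac{p-\hpjn}{(1-p)(1-\hpjn)}\bigl(1-\PX\bigr)+\text{lower order},
\ee
one obtains on the event $\{|\hpjn-p|\le (1-p)/2\}$ a stochastic term of order $n^{-1}\int_{|t|\le\tau_j}|\PU(t)|^{-2}dt\lesssim n^{-1}\tau_j^{2\nu+1}$ and a plug-in term controlled by $\|1-\PX\|_2^2\,(\hpjn-p)^2$, which by Theorem~\ref{theo:lepski_p} contributes at most $(\log n)n^{-(2s+1)/(2s+2\nu)}$ and is dominated by the main rate. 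Balancing the deterministic bias $\tau^{-2s}$ with the stochastic variance $\tau^{2\nu+1}/n$ yields the oracle cutoff $\tau_\star \asymp n^{1/(2s+2\nu+1)}$ and the ideal squared rate $n^{-2s/(2s+2\nu+1)}$.

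Since $s$ is unknown, the estimator $\hjf$ is selected from a geometric grid of cutoffs by the Lepskii rule \eqref{eq:defln}: $\hjf$ is the smallest index for which all $\|\hat f_{n,j}-\hat f_{n,l}\|_2$, $l\ge j$, stay below a threshold of the calibrated order $(\log n)^{a}\,n^{-1/2}\,\tau_l^{\nu+1/2}$ defined through \eqref{eq:taun}. The key probabilistic input is a uniform in $t\in[-\tau_j,\tau_j]$ concentration inequality for $\hPZ(t)-\PZ(t)$, obtained from a Bernstein or Talagrand-type bound applied to the empirical processes $n^{-1}\sum_k\cos(tZ_k)/\PU(t)$ and $n^{-1}\sum_k\sin(tZ_k)/\PU(t)$, combined with a mesh argument on $[-\tau_j,\tau_j]$. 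Together with the control of $\hpjn$ provided by Theorem~\ref{theo:lepski_p}, this yields a deviation inequality for $\|\hat f_{n,j}-\E\hat f_{n,j}\|_2$ compatible with the Lepskii threshold. The standard Lepskii oracle inequality then gives
\be
\|\hat f_{n,\hjf}-f\|_2
\;\lesssim\;\min_{j}\Bigl\{\tau_j^{-s}+(\log n)^{a}\,n^{-1/2}\tau_j^{\nu+1/2}\Bigr\}+\text{remainder},
\ee
with high probability, and the minimum at $j$ corresponding to $\tau_j\asymp n^{1/(2s+2\nu+1)}$ produces the announced rate $(\log n)^{a+1/2}n^{-s/(2s+2\nu+1)}$. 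Taking expectations (and using the complementary event where the plug-in control fails, which has polynomially small probability and hence negligible contribution) finishes the argument.

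The main obstacle is the simultaneous control of two sources of randomness: the kernel deconvolution statistic $\hPZ/\PU$, which must be concentrated uniformly in $t$ up to the highest grid cutoff, and the plug-in Bernoulli estimator $\hpjn$, whose own adaptive nature (and the potentially degenerate $(1-\hpjn)^{-1}$ factor) must be handled on a high-probability event derived from Theorem~\ref{theo:lepski_p}. The careful calibration of the Lepskii threshold in \eqref{eq:taun} through the exponent $a$ is exactly what absorbs the $\sqrt{\log n}$ factor coming from this uniform concentration together with the additional $\log n$ factor inherited from the adaptive estimation of $p$.
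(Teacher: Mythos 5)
Your high-level roadmap (Parseval, bias/variance at a deterministic cutoff, a Lepskii oracle inequality, a uniform-in-$t$ concentration for $\hat\Phi_{Z,n}-\Phi_Z$) matches the paper's architecture, but several steps on which the rate genuinely depends are either absent or would not go through as you state them.

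First, the plug-in estimator of $p$ in $\hPXnj$ is \emph{not} the adaptive $\hat p_{n,\hjp}$: for each grid index $j$ the paper uses a non-adaptive, truncated estimator $\hat p^{(1)}_{n,j}$, built on the first half of the sample, while $\hPZ^{(2)}$ is the empirical characteristic function on the second half. This sample splitting is load-bearing: it makes $\hat p^{(1)}_{n,j}$ independent of $\hPZ^{(2)}$, which is what lets the bias $\EE\hPXnj(t)$ factor nicely in the proof of Proposition~\ref{prop:inclusion}. Your proposal leaves the two sources of randomness coupled and then tries to handle the plug-in via ``Theorem~\ref{theo:lepski_p}'', which is not what happens; the relevant ingredients are the \emph{non-adaptive} rate (Theorem~\ref{theo:estimp} / Proposition~\ref{prop:mieux}) and the concentration inequality for fixed $l$ (Proposition~\ref{prop:concentration_p}). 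Second, the singular factor $(1-\hat p)^{-1}$ is not controlled by a high-probability event $\{|\hat p-p|\le(1-p)/2\}$; it is controlled deterministically by the truncation $\hat p^{(1)}_{n,j}\le 1-\tau_n$ with $\tau_n=(\log n)^{-a}$, and this is precisely where the extra $(\log n)^a$ factor in $\rho_{n,l}$ comes from. Your stated threshold $(\log n)^a n^{-1/2}\tau_l^{\nu+1/2}$ is off by a factor $(\log n)^{1/2}$: the penalty must be $\rho_{n,l}=\beta(\log n)^{a+1/2}n^{-s_l/(2s_l+2\nu+1)}$, where the extra $\sqrt{\log n}$ absorbs the size of the grid and the discretization/Hoeffding price in the concentration bound (and note the paper explicitly rejects the Talagrand chaining route, since $U$ is not sub-Gaussian; it uses a Lipschitz estimate plus a mesh plus Hoeffding). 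Third, the quantity $\|1-\PX\|_2$ that you use to bound the plug-in contribution is infinite ($\PX(t)\to 0$, so $|1-\PX|\to 1$); what is actually finite is $\|\PQ(\delta_{n,l}\,\cdot)(1-\PX)\|_2^2\lesssim\delta_{n,l}^{-1}$, and negligibility of that term against $\rho_{n,l}^2$ relies on $h_{n,l}=o(\delta_{n,l})$. As written, your bound for this term is numerically wrong and your conclusion that it is ``dominated by the main rate'' would need to be re-derived with the $\delta_{n,l}^{-1}$ factor and the correct non-adaptive $\hat p^{(1)}_{n,j}$.
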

We emphasize that our estimators produced both for $p$ and $f$ induced by the Lepskii rule are non-asymptotic and fully-adaptive with respect to the smoothness parameter $s$. Therefore, our results produce an adaptive minimax upper bound up to some log term.\medskip

We should at last remark that the previous results require the important knowledge of the convolution operator brought by $\Phi_U$. Even if this assumption is legitimate for our intended applications, this may not be the case in other situations. When the operator described by $\Phi_U$ is unknown and has to be estimated from the data, then the problem falls into the framework of deconvolution with noise in the operator (see \cite{cavalier2}). In such a case, it is highly suspected that the attainable rates may be damaged by the preliminary estimation of $\Phi_U$. Such a theoretical development is beyond the scope of this work and certainly deserves some careful derivations to understand how the noise on the eigenvalues of the ``deconvolution operator" is propagated (see \textit{e.g.} \cite{cavalier3} and \cite{cavalier1}).\medskip

The article is organized as follows: Section \ref{sec:p} details first the non-adaptive and then the adaptive estimation of the contamination parameter $p$ while Section \ref{sec:f} proposes to solve similar estimation problems for $f$. We present numerical simulations in Section \ref{sec:numerics} on both simulated and real data-sets and the interest of our results for biological purposes.

\section{Estimation of the contamination rate $1-p$}
\label{sec:p}
We first consider the problem of estimating the contamination rate $1-p$. We recall the non-adaptive  results obtained by \cite{vanes1}  and expose our adaptive strategy afterwards.
\subsection{Non adaptive estimation of $p$}\label{sec:nonadap}
\label{subsec:non_ada_p}
We describe below the estimators proposed by \cite{vanes1}, that will be used to obtain our adaptive procedure.
A key observation relies on Equation \eqref{eq:Fourier}.
This last equation makes it possible, from $n$ independent observations $(Z_i)_{1 \leq i \leq n}$, to obtain an estimation of $\PX$ and $p$. Given this set of observations, the first  step consists in  the introduction of the empirical estimator of the Fourier transform of $Z$:
$$
\forall t  \in \RR \qquad \hPZ(t) := \frac{1}{n} \sum_{i=1}^n e^{\ii t Z_i}.
$$
A key relationship between $p$ and the several Fourier transforms is given by \eqref{eq:Fourier}.
Since $f$ belongs to $\Hs$, then $\lim_{|t| \longrightarrow + \infty} \PX(t) = 0$. Therefore, we expect to recover $p$ by using the information brought by $\PZ$ and $\PU$ at large frequencies. In particular, Equation \eqref{eq:Fourier} implies the identifiability of the model and provides some insights for an estimation strategy. Second, we may use the knowledge of $\PU$ and in particular the fact that the density of $U$ belongs to $\Hnu$, which entails some lower bounds of $\PU$ for large frequencies. 

We use the construction of  \cite{vanes2}
and introduce a smooth real valued kernel $\PK$ in the Fourier domain that satisfies:
\begin{equation}\label{eq:kernel}
\PK(t) \neq 0 \Longleftrightarrow t \in [-1;1] \qquad \text{and} \qquad \int_{-1}^1 \PK(t) dt = 2,
\end{equation}
and a flatness condition on the neighborhood of $0$ ($\mathcal{V}_k$ below denotes an open neighborhood of $0$):
\begin{equation*}
\forall k \in \RR_+, \quad \exists C_k > 0,\quad \exists \mathcal{V}_k \quad \forall t \in \mathcal{V}_k: \qquad |\PK(t)| \leq C_k |t|^{k} .
\end{equation*}
This last condition is not restrictive and is satisfied  when $\PK$ is chosen for example as:
$$
\PK(t) := \frac{e^{-a |t|^{-m}}}{C_{a,m}} \mathbf{1}_{|t| \leq 1},
$$
for any $a>0$, $m \geq 1$ ($C_{a,m}$ is the normalizing constant associated to Equation \eqref{eq:kernel}). Note that this local condition  may be replaced by a global one because of $\PK$ is bounded on $[-1;1]$. Therefore, we keep the notation $C_k$ and assume that:
\begin{equation}\label{eq:askernel}
\forall k \in \RR_+, \quad \exists C_k > 0 \qquad \forall t \in [-1;1]: \qquad |\PK(t)| \leq C_k |t|^{k}.
\end{equation}
Following the works \cite{Fan,vanes2}, we use the kernel $\PK$ on the Fourier transform $p + (1-p) \PX$ and obtain:
$$
\frac{h}{2} \int_{\RR} \PK(ht)  \frac{  \PZ(t)}{\PU(t)} dt = 
\frac{h}{2} \int_{\RR} \PK(ht)  [p+(1-p) \PX(t)] dt = p + \frac{h (1-p)}{2}\int_{-h^{-1}}^{h^{-1}}\PK(ht) \PX(t) dt.
$$
The last term of the r.h.s. vanishes when $h \longrightarrow 0$. Moreover, it is possible to  obtain a tight upper bound in terms of $h$ of this \textit{bias} term:
\begin{align}
\left| h  \int_{-h^{-1}}^{h^{-1}}\PK(ht) \PX(t) dt \right| &= 
\left| h  \int_{-h^{-1}}^{h^{-1}}\PK(ht)|t|^{-s} \PX(t) |t|^{s} dt \right| \nonumber\\
& \leq  h \sqrt{ \int_{-h^{-1}}^{h^{-1}} |\PK(ht)|^2 |t|^{-2s} dt
\int_{\RR} |\PX(t)|^2 |t|^{2s} dt }  \leq \sqrt{2} C_{s} R h^{s+1/2},\label{eq:biais}
\end{align}
where we applied the Cauchy-Schwarz inequality and then Inequality \eqref{eq:askernel} with $k=s$ and the fact that  $f \in \Hs$.
Therefore, we can write that:
$$
\lim_{h \longrightarrow 0} \frac{h}{2} \int_{\RR} \PK(ht)  \frac{  \PZ(t)}{\PU(t)} dt = p.
$$
We can plug Equation \eqref{eq:Fourier} in the limit above and then define a natural  estimator of $p$ (that will depend on a small bandwith parameter $h_n$):
\begin{equation}\label{eq:defpn}
\hat{p}_n := \frac{h_n}{2} \int_{-h_n^{-1}}^{h_n^{-1}}\PK(h_n t) \frac{\hPZ(t)}{\PU(t)} dt.
\end{equation}
According to \eqref{eq:biais}, we can compute an upper-bound of the bias of $\hat{p}_n$ as:
$$
[\EE \hat{p}_n - p]^2 \leq 2 C_{s}^2 R^2 h_n^{2s+1}.
$$
The variance of $\hat{p}_n$ is handled in a standard way following the arguments of \cite{Fan} with $\nu>1$:
\begin{equation}\label{eq:var}
\VV(\hat{p}_n) \lesssim \frac{h_n^2}{n} \int_{\RR} \left|\int_{\RR}e^{\ii t z} \frac{\PK(h_n t)}{\PU(t)} dt \right|^2 dz \lesssim \frac{h_n}{n} \int_{\RR} \left| \frac{\PK(t)}{\PU(t h_n^{-1})}\right|^2 dt \lesssim \frac{1}{n h_n^{2\nu-1}}.
\end{equation}
Now, a classical optimization of the bias-variance tradeoff yields the optimal (non-adaptive) choice for the bandwidth parameter  $h_n^{\star} := n^{-\frac{1}{2s+2\nu}}.$ It can be shown  that this choice leads to the minimax optimal consistency rate $n^{-(2s+1)/(2s+2\nu)}$. In other words, we have obtained:
\begin{theo}[Theorem 1(i) and 5 in \cite{vanes2}]\label{theo:estimp}
Assume that $f \in \Hs$ and $g \in \Hnu$ with $\nu > 1$, then the choice $h_n = n^{-\frac{1}{2s+2\nu}}$ in \eqref{eq:defpn} leads to an estimator $\hat{p}_n$ that satisfies the consistency rate:
$$
\mathbb{E} [|\hat{p}_n-p|^2] \leq C(s,R) n^{-\frac{(2s+1)}{2s+2\nu}},
$$
where $C(s,R)$ is a positive constant that continuously depend of $s$ and $R$.
Moreover, this estimator is minimax optimal under the additional assumption that $|\phi'(t)|(1 +|t|^\nu)\le d_2$ for all $t\in \R$ and $d_2>0$.
\end{theo}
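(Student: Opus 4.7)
The plan is to establish the theorem in two independent parts: the risk upper bound, which follows by direct substitution into the bias-variance estimates derived just above, and the matching minimax lower bound, which requires a dedicated two-hypothesis construction.

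For the upper bound, I would start from the decomposition
\begin{equation*}
\E\bigl[(\hat p_n - p)^2\bigr] = [\E \hat p_n - p]^2 + \VV(\hat p_n).
\end{equation*}
The squared bias is already controlled by inequality \eqref{eq:biais}, which yields $[\E \hat p_n - p]^2 \le 2 C_s^2 R^2 h_n^{2s+1}$, and the variance by \eqref{eq:var}, which gives $\VV(\hat p_n) \lesssim 1/(n h_n^{2\nu-1})$. Equating the two summands leads to $h_n^{2s+2\nu} \asymp n^{-1}$, i.e.\ $h_n = n^{-1/(2s+2\nu)}$, for which both terms become of order $n^{-(2s+1)/(2s+2\nu)}$. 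The prefactor $C(s,R)$ depends continuously on $(s,R)$, since it only involves $C_s$, $R$, and integrals of $\PK$ and $1/\PU$ that are independent of $n$.

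For the minimax lower bound, I would apply Le Cam's two-point method: construct two hypotheses $(p_0, f_0)$ and $(p_1, f_1) = (p_0 + \varepsilon_n,\, f_0 + \delta_n \psi_n)$ in $(0,1) \times \Hs$ whose $Z$-densities are at chi-square distance $O(1/n)$. Since the observable quantity is $\PZ/\PU = p + (1-p)\PX$, a shift of $p$ by $\varepsilon_n$ must be compensated by a perturbation $\delta_n \Phi_{\psi_n}(t) = -\varepsilon_n(1-\Phi_{f_0}(t))/(1-p_0)$ at low frequencies. Such a compensation cannot be extended to infinity (since $\Phi_{f_i}(t) \to 0$), so a frequency cutoff $T_n$ is enforced, beyond which the $\varepsilon_n$-shift leaks an uncompensated contribution of order $\varepsilon_n \PU(t)$ into $\Phi_{Z,1} - \Phi_{Z,0}$. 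The Sobolev constraint $f_1 \in \Hs$ then reads $\varepsilon_n^2 T_n^{2s+1} \lesssim 1$, while keeping the tail $L^2$ norm below $1/\sqrt{n}$ reads $\varepsilon_n^2 T_n^{1-2\nu} \lesssim 1/n$; balancing the two yields $T_n \asymp n^{1/(2s+2\nu)}$ and $\varepsilon_n \asymp n^{-(2s+1)/(4s+4\nu)}$. Le Cam's inequality then produces
\begin{equation*}
\inf_{\tilde p_n} \sup_{(p,f) \in (0,1) \times \Hs} \E_{(p,f)}\bigl[(\tilde p_n - p)^2\bigr] \gtrsim \varepsilon_n^2 \asymp n^{-(2s+1)/(2s+2\nu)}.
\end{equation*}

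The main obstacle lies entirely in the lower bound: one must check that the compensation of the $p$-shift by an $f$-perturbation is legitimate, i.e.\ $f_1$ remains a bona fide density in $\Hs$, and that the truncation at frequency $T_n$ can be realized with a smooth cutoff whose own contribution to the chi-square remains negligible. The derivative assumption $|\Phi_U'(t)|(1+|t|^\nu) \le d_2$ enters precisely here: it allows one to replace $\PU$ by its value at the center of each dyadic shell via a Taylor expansion when estimating the tail of $\|\Phi_{Z,1} - \Phi_{Z,0}\|_2^2$, while the lower bound $|\PU(t)| \gtrsim |t|^{-\nu}$ ensures that the tail contribution is of the correct order and cannot be sharpened. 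The fact that the critical frequency $T_n \asymp 1/h_n^\star$ coincides with the inverse of the optimal bandwidth from the upper bound is the usual signature of a sharp minimax rate.
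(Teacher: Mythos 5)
This theorem is not proved in the paper; it is imported from van Es and van Zuijlen (Theorem 1(i) and Theorem 5 in \cite{vanes2}), and the paper only sketches the upper-bound bias-variance computation just before the statement. Your upper-bound argument reproduces exactly that sketch: squared bias bounded by \eqref{eq:biais}, variance by \eqref{eq:var}, and the choice $h_n = n^{-1/(2s+2\nu)}$ balances the two to give $n^{-(2s+1)/(2s+2\nu)}$. Nothing to quarrel with there.

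For the minimax lower bound you are going further than the paper, which simply cites \cite{vanes2}. Your Le Cam two-point sketch has the right skeleton and the rate balance is correct: equating the Sobolev constraint $\varepsilon_n^2 T_n^{2s+1} \lesssim 1$ with the tail chi-square constraint $\varepsilon_n^2 T_n^{1-2\nu} \lesssim n^{-1}$ gives $T_n \asymp n^{1/(2s+2\nu)}$ and $\varepsilon_n^2 \asymp n^{-(2s+1)/(2s+2\nu)}$, as required. Two cautions. First, the compensating perturbation you write, $\delta_n \Phi_{\psi_n}(t) = -\varepsilon_n(1-\Phi_{f_0}(t))/(1-p_0)$, should have $1-p_1$ in the denominator to cancel exactly after expanding $(1-p_1)\Phi_{f_1} - (1-p_0)\Phi_{f_0}$ (this does not affect the rate but matters for the algebra). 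Second, and more substantively, you have flagged but not filled the genuinely nontrivial verifications: that $f_1 = f_0 + \delta_n\psi_n$ remains a non-negative density (the mean-zero property is automatic since $\Phi_{\psi_n}(0)=0$, but non-negativity requires $\|\delta_n\psi_n\|_\infty$ small relative to $\inf f_0$ on the relevant set, which constrains the choice of $f_0$ and the smoothness of the cutoff), that the smooth truncation near $T_n$ contributes only lower order to the $L^2$/chi-square discrepancy, and that one can pass from an $L^2$ bound on the density difference to a chi-square bound (which needs the $Z$-density to be bounded below where the discrepancy lives). Your explanation of how the derivative assumption $|\Phi_U'(t)|(1+|t|^\nu)\le d_2$ enters is speculative; in the cited source this condition serves to control the variation of $\Phi_U$ across the transition band so that the uncancelled tail integral behaves as claimed, not for a dyadic Taylor argument per se. In short: the upper bound is complete and matches the paper; the lower bound is a correct rate computation around an acknowledged sketch, supplying detail the paper itself does not attempt.
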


This result highly depends on the knowledge of $s$ and a bad strategy for the choice of $s$ and $h_n$  will significantly arms the estimation procedure. Nevertheless it is our  starting point to produce an adaptive estimator of $p$.

\subsection{Adaptive estimation of $p$}
\label{subsec:lepskip}

We propose to adapt the Lepskii strategy (see \textit{e.g.} \cite{L92}) to obtain an adaptive estimator of $p$.

\paragraph{Grid on $s$}

We define a grid $\mathcal{S}_n$  and will estimate $s$ with an exploration of the possible values in  $\mathcal{S}_n$. We consider an interval $[0,s_0]$ 
 where we assume that $s\in[0,s_0]$ and define a regularly spaced sequence 
$$s_{max}=s_0>s_1>\dots>s_{k_n}\ge 0,$$ such that for every $0\le j\le k_n$, 
\begin{equation}\label{eq:grid_s}
s_j:=s_0-j\frac{\varepsilon}{\log n}.\end{equation}
In the last formula, $\varepsilon>0$ is a fixed parameter. This parameter will be calibrated later on and will permit to obtain good estimation properties.
For each $0\le j\le k_n$, we associate the bandwidth parameter $h_{n,j}$ that corresponds 
to the optimal bandwidth parameter chosen above when the smoothness of $f$ is known and equal to $s_j$. This rule yields:
\ben
\label{def:hnj}
h_{n,j}:=n^{-1/(2s_j+2\nu)}.
\een
The corresponding estimator is then denoted by:
\begin{equation}\label{eq:pnj}
\hat{p}_{n,j} :=0\vee\left( \frac{h_{n,j}}{2} \int_{-h_{n,j}^{-1}}^{h_{n,j}^{-1}} \PK(h_nt) \frac{\hPZ(t)}{\PU(t)} dt \right) \wedge1.
\end{equation}
We choose to constraint the estimator to obtain a value in $[0,1]$, which will not damage the properties of the estimator since $p\in[0,1]$.

The associated minimax risk in the smoothness class $\mathcal{H}_{s_j}(R)$ is of the order $n^{-(s_j+1/2)/(2 s_j+2\nu)}$ (see Theorem \ref{theo:estimp}). Given a positive parameter $\beta$, we introduce a \textit{penalty} term defined by this minimax risk up to a log term:
\begin{equation}
\label{eq:penalty}
\kappa_{n,j} := \beta \sqrt{\log n} n^{-(s_j+1/2)/(2 s_j+2\nu)},
\end{equation}
We should note the important monotonic variations of the quantities defined above:  when $j>l$ then $s_j<s_l$ and the bandwidth and penalty parameters satisfy $h_{n,j}<h_{n,l}$ and $\kappa_{n,j}>\kappa_{n,l}$. We introduce the notation $s^\star$ for the closest element to the regularity $s$ from below in the grid $\mathcal{G}_n$:
\ben
\label{def:sstar}
s^\star := \sup \left\{ s_j \in \mathcal{G}_n \, : \, s_j \leq s\right\}.
\een
For the sake of convenience, we denote by $j^{\star}$ the integer in $\{0,\dots,k_n\}$ such that  $s^\star=s_{j^\star}$.
This integer deterministically depends on $n$ and $\epsilon$, but we omit this dependence to alleviate the notations. The calibration of the grid yields
$$
0 \leq s - s^\star \leq \frac{\varepsilon}{\log n}.
$$

\paragraph{Model selection with the Lepskii rule}
To construct the adaptive estimator, we use the following decision rule driven by the bias variance decomposition of each estimator of $\hat{p}_{n,j}$:
\begin{equation}
\label{def:j_chap_p}
\hjp:=\inf\bigl\{0\le j\le k_n,\quad \forall l>j\, : \, \quad \left| \hat{p}_{n,j}-\hat{p}_{n,l} \right|<\kappa_{n,l}\bigr\}.
\end{equation}
The decision rule defined by \eqref{def:j_chap_p} is costly from a computational point of view. However, alternative coding strategies inspired from \cite{Katkovnik} make it possible to efficiently implement \eqref{def:j_chap_p}. These strategies rely on the construction of confidence intervals for the sequence $(\hat{p}_{n,j})_{j}$. We will see in Section \ref{sec:numerics} that if such a strategy works for the computation of a scalar paramter as $p$; it is unfortunately no more possible for the estimation of a density.

The objective of the next paragraphs is to establish the consistency rate of this adaptive strategy, whose performance is described by Theorem \ref{theo:lepski_p}. We first detail the Lepskii strategy, than give a concentration result and conclude.


\subsubsection{Analysis of the Lepskii rule}

We begin by the statement of two preliminary results. The first one links the average performance of $\hpjn$ with some deviations inequalities.
\begin{pro}\label{prop:lepski}
Assume that $f \in \Hs$ and $g$ is a known density in $\Hnu$.Then the estimator $\hpjn$ defined with \eqref{eq:pnj} and \eqref{def:j_chap_p} satisfies
 $$
\E \abso{\hpjn-p}^2 \le \left(\beta^2 \log(n) +C\right) e^{2\varepsilon/(2s+2\nu)} n^{-(2s+1)/(2s+2\nu)} + \sum_{l  \ge j^\star} \Pro\Bigl( \abso{\hat{p}_{n,l}-p} \ge \frac{\kappa_{n,l}}{2}\Bigr).
$$
\end{pro}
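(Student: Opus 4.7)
The plan is to decompose $\E|\hpjn - p|^2$ according to whether the data-driven index $\hjp$ undershoots or overshoots the deterministic oracle $j^\star$ from \eqref{def:sstar}. Writing
\bea
\E|\hpjn - p|^2 = \E\bigl[|\hpjn - p|^2 \ind{\hjp \le j^\star}\bigr] + \E\bigl[|\hpjn - p|^2 \ind{\hjp > j^\star}\bigr],
\eea
the first summand will yield the deterministic rate through the oracle estimator $\hat p_{n,j^\star}$, whereas the second will produce the residual large-deviation sum.

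\textbf{Good event.} By the very definition of $\hjp$ in \eqref{def:j_chap_p}, on $\{\hjp \le j^\star\}$ one has $|\hpjn - \hat p_{n,j^\star}| \le \kappa_{n,j^\star}$ (trivially if $\hjp = j^\star$; otherwise by applying the passing condition with $l = j^\star > \hjp$). Using $(a+b)^2 \le 2a^2 + 2b^2$ I would then write
\bea
\E\bigl[|\hpjn - p|^2 \ind{\hjp \le j^\star}\bigr] \le 2\kappa_{n,j^\star}^2 + 2\,\E|\hat p_{n,j^\star} - p|^2,
\eea
and inject the non-adaptive bias/variance estimates of Section \ref{subsec:non_ada_p}: combining \eqref{eq:biais} with \eqref{eq:var} at $h = h_{n,j^\star}$ gives
\bea
\E|\hat p_{n,j^\star} - p|^2 \lesssim h_{n,j^\star}^{2s+1} + \frac{1}{n\, h_{n,j^\star}^{2\nu-1}} \lesssim n^{-(2s^\star+1)/(2s^\star+2\nu)},
\eea
which is of the same order as $\kappa_{n,j^\star}^2/\log n$. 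The transfer from the exponent at $s^\star$ to the exponent at $s$ is a short algebraic manipulation:
\bea
\frac{s^\star+1/2}{2s^\star+2\nu} - \frac{s+1/2}{2s+2\nu} = -\frac{(2\nu-1)(s-s^\star)}{(2s+2\nu)(2s^\star+2\nu)},
\eea
and since $0 \le s - s^\star \le \varepsilon/\log n$ and $(2\nu-1)/(2s^\star+2\nu) \le 1$, exponentiation produces the uniform factor $e^{2\varepsilon/(2s+2\nu)}$ in front of $n^{-(2s+1)/(2s+2\nu)}$.

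\textbf{Bad event.} On $\{\hjp > j^\star\}$, the oracle index $j^\star$ itself must fail the Lepskii test in \eqref{def:j_chap_p}, so there exists $l > j^\star$ with $|\hat p_{n,j^\star} - \hat p_{n,l}| \ge \kappa_{n,l}$. A triangle inequality together with the monotonicity $\kappa_{n,l} \ge \kappa_{n,j^\star}$ noted right after \eqref{eq:penalty} forces one of $|\hat p_{n,j^\star} - p|$ and $|\hat p_{n,l} - p|$ to exceed $\kappa_{n,l}/2$, whence
\bea
\{\hjp > j^\star\} \subset \bigcup_{l \ge j^\star} \bigl\{|\hat p_{n,l} - p| \ge \kappa_{n,l}/2\bigr\}.
\eea
Since $\hpjn$ and $p$ both belong to $[0,1]$, I would bound $|\hpjn - p|^2 \le 1$ on this event and conclude by a union bound, which delivers the sum displayed in the statement.

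The main obstacle will be the bookkeeping of the exponent shift from $s^\star$ to $s$: one must verify that the extra multiplicative factor can indeed be written in the uniform form $e^{2\varepsilon/(2s+2\nu)}$, independently of $j^\star$ and of where $s$ sits in the grid. This is precisely why the grid spacing in \eqref{eq:grid_s} is taken proportional to $1/\log n$, any coarser step would blow up the prefactor and destroy the minimax rate.
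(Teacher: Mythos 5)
Your proof is correct and follows essentially the same route as the paper's: the same decomposition on $\{\hjp \le j^\star\}$ versus $\{\hjp > j^\star\}$, the same use of the Lepskii stopping rule to bound $|\hpjn - \hat p_{n,j^\star}|$ by $\kappa_{n,j^\star}$ on the good event with the non-adaptive oracle bound for $\hat p_{n,j^\star}$, and the same triangle-plus-monotonicity argument (together with $|\hpjn - p|\le 1$) to reduce the bad event to the displayed union. The exponent-shift bookkeeping from $s^\star$ to $s$ matches the paper's as well (the paper bounds $2\nu-1$ by $2\nu$ before dividing by $2s^\star+2\nu$, while you keep $2\nu-1$; both give the factor $e^{2\varepsilon/(2s+2\nu)}$ after squaring).
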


\begin{proof}
We can decompose the estimation error as

$$\EE\left(\left|\hpjn-p\right|^2\right)=
\EE \left( \left|\hpjn-p\right|^2 \mathbf{1}_{\hjp \leq j^\star} +
\left|\hat{p}_{n,\hjp}-p\right|^2 \mathbf{1}_{\hat{j}_n > j^\star}
 \right) 
$$

\begin{itemize}
\item We first consider the event $\{\hjp \le j^\star\}$ and apply the inequality $(a+b)^2 \leq 2 a^2+2 b^2$ to obtain:
\begin{equation}\label{eq:triangle}
\left|\hpjn-p \right|^2 \le2 \abso{\hpjn-\hpjs}^2+2\abso{\hpjs-p}^2.
\end{equation}
We study the first term of the r.h.s. of \eqref{eq:triangle}.
On the event $\{\hjp < j^\star\}$, the definition \eqref{def:j_chap_p} of $\hjp$ yields:
$$\abso{\hpjn-\hpjs} <\kappa_{n,j^\star},$$
and obviously when $\hjp = j^\star$ the upper bound above also holds. Hence, we deduce that:
$$
\un_{\hjp \le  j^\star} \abso{\hpjn-\hpjs}^2  < \{\kappa_{n,j^\star}\}^2.
$$
Moreover, writing $s^\star=s+(s^\star-s)$ in the definition of the penalty \eqref{eq:penalty} leads to:
\bea
\kappa_{n,j^\star}&=\beta \sqrt{\log(n)} n^{-(s^\star+1/2)/(2s^\star+2\nu)}\\
&=\beta\sqrt{\log(n)} n^{-(s+1/2)/(2s+2\nu)}\exp\left[\left(\frac{s+1/2}{(2s+2\nu)}-\frac{s^\star+1/2}{(2s^\star+2\nu)}\right)\log n \right]\\
&\le  \beta\sqrt{\log(n)} n^{-(s+1/2)/(2s+2\nu)}\exp\left[\frac{2 \nu \epsilon \{\log n\}^{-1}}{(2s^\star+2\nu)(2s+2\nu)} \log n \right]\\
&\le \beta \exp\Bigl[\frac{\varepsilon}{2s+2\nu }\Bigr] \sqrt{\log(n)} n^{-(s+1/2)/(2s+2\nu)}.\\
\eea
We can handle the second term of the r.h.s.  of \eqref{eq:triangle} easily: the nonadaptive estimator obtained with $j^\star$  satisfies the upper bound obtained in Theorem \ref{theo:estimp}, so that:
\bea
\E(\abso{\hat{p}_{n,j^\star}-p}^2)&\le C n^{-(2s^\star+1)/(2s^\star+2\nu)}\\
&\le C \exp\left(\frac{2\varepsilon}{2s+2\nu}\right)n^{-(2s+1)/(2s+2\nu)},
\eea
where $C$ is a positive constant independent from $s$, $n$, and $\varepsilon$ (the constant $C$ only depends on $R$ and $s_0$).
Then, 
\ben
\label{eq:cas1}
\E(\un_{\hjp \le  j^\star} \abso{\hpjn-p}^2)\le  \left(\beta^2 \log(n) +C\right) \exp\left(\frac{2\varepsilon}{2s+2\nu}\right) n^{-(2s+1)/(2s+2\nu)}.
\een
\item We consider now the complementary event $\{\hjp > j^\star\}$ and first remark that
$$
\hjp >  j^\star \Longleftrightarrow \forall j \leq j^\star\, \exists \, l > j   \, : \, \abso{\hat{p}_{n,j} - \hat{p}_{n,l}} \ge \kappa_{n,l}.
$$
Therefore, we deduce that
\begin{align}
\{\hjp> j^\star\} &=\bigcap_{j \leq j^\star} \bigcup_{l>j}\Bigl\{ \abso{\hat{p}_{n,j} - \hat{p}_{n,l}} \ge \kappa_{n,l} \Bigr\} \nonumber\\
&\subset  \bigcup_{l>j^\star}\Bigl\{ \abso{\hat{p}_{n,j^\star} - \hat{p}_{n,l}} \ge \kappa_{n,l} \Bigr\} \nonumber\\
& \subset \bigcup_{l>j^\star}\Bigl\{ \abso{ \hat{p}_{n,l} - p} \ge \frac{\kappa_{n,l}}{2} \Bigr\} \bigcup_{l>j^\star}\Bigl\{ \abso{\hat{p}_{n,j^\star} - p} \ge \frac{\kappa_{n,l}}{2} \Bigr\}.  \label{eq:inclusion}
\end{align}
where the second line comes from the inclusion $\displaystyle\cup_{j \leq j^\star}\{. \} \subset \displaystyle\cup_{j = j^\star} \{. \}$ and the last line from the triangle inequality.
We have seen that the map $l\mapsto  \kappa_{n,l} $ is increasing then when $l>j^\star$, we have $\kappa_{n,l} \geq \kappa_{n,j^\star}$ so that
$$\bigcup_{l > j^\star} \Bigl\{ \abso{\hat{p}_{n,j^\star}-p} \ge \frac{\kappa_{n,l}}{2}\Bigr\}\subset\Bigl\{ \abso{\hat{p}_{n,j^\star}-p} \ge \frac{\kappa_{n,j^\star}}{2} \Bigr\}.
$$
Using this last inclusion in the second union of the right hand side of \eqref{eq:inclusion}, we finally deduce that
\begin{equation*}
\{\hjp> j^\star\} \subset  \bigcup_{l \geq j^\star}
\Bigl\{ \abso{\hat{p}_{n,l} - p} \ge \frac{\kappa_{n,l}}{2} \Bigr\}.
\end{equation*}
From the obvious upper-bound $\abso{\hpjn-p}^2\le1$,  we can conclude that:
\ben
\label{eq:eq2}
\E\bigl(\abso{\hpjn-p}^2\un_{\hat{j}_n> j^\star}\bigr)
\le \E\bigl(\un_{\hjp> j^\star}\bigr) \le 
\sum_{l \ge j^\star} \Pro\Bigl( \abso{\hat{p}_{n,l}-p} \ge \frac{\kappa_{n,l}}{2}\Bigr).
\een
\end{itemize}
Gathering \eqref{eq:cas1} and \eqref{eq:eq2} leads to the conclusion.
\hfill $\square$
\end{proof}

\subsubsection{Concentration inequality}
\label{subsec:concentration_p}
The adaptivity property of the estimator $\hpjn$ will be deduced from the multiple-testing strategy induced by Proposition \ref{prop:concentration_f}. A baseline property to successively apply this approach will be a derivation of a concentration inequality on $\hat{p}_{n,l} - p$ for each fixed $l$. 
Next proposition states that such a concentration inequality holds.

\begin{pro}\label{prop:concentration_p}
Let $f \in \Hs$, $g$ a known density in $\Hnu$ with $\nu>1/2$, and $l \ge j^\star$, then
$$
\Pro\Bigl( \abso{\hat{p}_{n,l}-p} \ge \frac{\kappa_{n,l}}{2}\Bigr) \le 2 n^{-\beta^2/64}.
$$
\end{pro}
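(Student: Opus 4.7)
The plan is to discard the $[0,1]$-truncation, split the error into bias and stochastic parts, and apply Bernstein's inequality to the stochastic part with $x=\kappa_{n,l}/4$ after checking that at this scale the variance term dominates the sup-norm term.

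Let $\bar p_{n,l}$ denote the quantity inside the $0 \vee (\cdot) \wedge 1$ of \eqref{eq:pnj}. Since $p\in[0,1]$ and the truncation map is $1$-Lipschitz, $|\hat p_{n,l}-p|\le|\bar p_{n,l}-p|$, and I decompose
$$
\bar p_{n,l}-p=\bigl(\bar p_{n,l}-\EE\bar p_{n,l}\bigr) + \bigl(\EE\bar p_{n,l}-p\bigr).
$$
The deterministic bias is controlled by the computation of \eqref{eq:biais}: $|\EE\bar p_{n,l}-p|\le \sqrt 2\, C_s R\, h_{n,l}^{s+1/2}$. The hypothesis $l\ge j^\star$ yields $s_l\le s^\star\le s$ and $h_{n,l}\le 1$, hence $h_{n,l}^{s+1/2}\le h_{n,l}^{s_l+1/2}=n^{-(s_l+1/2)/(2s_l+2\nu)}=\kappa_{n,l}/(\beta\sqrt{\log n})$; thus for $n$ large enough the bias is at most $\kappa_{n,l}/4$, and it is enough to prove
$$
\Pro\bigl(|\bar p_{n,l}-\EE \bar p_{n,l}|\ge \kappa_{n,l}/4\bigr)\le 2n^{-\beta^2/64}.
$$

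Writing $\bar p_{n,l}=n^{-1}\sum_{i=1}^n Y_i$ with the i.i.d.\ variables
$$
Y_i:=\frac{h_{n,l}}{2}\int_{-h_{n,l}^{-1}}^{h_{n,l}^{-1}}\PK(h_{n,l}t)\,\frac{e^{\ii t Z_i}}{\PU(t)}\,dt,
$$
the assumption $g\in\Hnu$ (through $|\PU(t)|\gtrsim (1+|t|)^{-\nu}$) together with the boundedness of $\PK$ yield the a.s.\ bound $|Y_i|\le M_n\lesssim h_{n,l}^{-\nu}$, while the computation of \eqref{eq:var} provides $\sigma_n^2:=\VV(Y_1)\lesssim h_{n,l}^{1-2\nu}$ (finite as soon as $\nu>1/2$). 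Bernstein's inequality applied at $x=\kappa_{n,l}/4$ gives
$$
\Pro\bigl(|\bar p_{n,l}-\EE\bar p_{n,l}|\ge \kappa_{n,l}/4\bigr)\le 2\exp\Bigl(-\frac{n\kappa_{n,l}^2/16}{2\sigma_n^2+M_n\kappa_{n,l}/6}\Bigr).
$$

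The bandwidth $h_{n,l}=n^{-1/(2s_l+2\nu)}$ was chosen precisely so that $n\kappa_{n,l}^2/\sigma_n^2$ is of order $\beta^2\log n$: the exponents of $n$ cancel and only the logarithm survives. On the other hand, $M_n\kappa_{n,l}/\sigma_n^2$ is of order $\sqrt{\log n}\cdot n^{-(s_l+\nu-1/2)/(2s_l+2\nu)}$, which tends to zero thanks to $\nu>1/2$ and $s_l\ge 0$; hence for $n$ large the denominator above is bounded by an absolute multiple of $\sigma_n^2$, and the factors $1/16$ from $x^2$ and $1/2$ from Bernstein compose into the stated exponent $\beta^2/64$, yielding the announced bound. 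The main obstacle, which I expect to be the most delicate point, is the careful tracking of the multiplicative constants in \eqref{eq:var} and in the Bernstein denominator so that they compose cleanly into exactly $\beta^2/64$; once the variance regime $M_n\kappa_{n,l}\lesssim\sigma_n^2$ is secured, the cancellation of the $n$-exponents is structural and independent of $s_l$.
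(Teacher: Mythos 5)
Your proof follows the same route as the paper's: split off the deterministic bias term and bound it by $C_s R h_{n,l}^{s+1/2} < \kappa_{n,l}/4$ via \eqref{eq:biais} and $s_l\le s$, then apply Bernstein to the centered empirical mean with the sup-bound $\lesssim h_{n,l}^{-\nu}$ and variance $\lesssim h_{n,l}^{1-2\nu}$, using $\nu>1/2$ (and $s_l\ge 0$) to show the variance dominates the sup-norm term in Bernstein's denominator and the design of $\kappa_{n,l}$ to turn the exponent into $\beta^2\log n/64$. The only substantive difference is that you explicitly dispose of the $[0,1]$-truncation via the $1$-Lipschitz projection argument, a small gap the paper's proof silently skips; otherwise the two arguments coincide, including the (shared, and equally loose) implicit absorption of the multiplicative constant in the variance bound into the chosen $64$.
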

\begin{proof}The  proof is divided into two steps. 

\noindent
\textit{\underline{Step 1: reduction to a concentration inequality.}}
For any $l\ge j^\star$ and $n \in \NN$, we write
\bea
\hat{p}_{n,l}-p& = \frac{h_{n,l}}{2} \int_{-h_{n,l}^{-1}}^{h_{n,l}} \PK(h_{n,l} t) \frac{\hPZ(t)}{\PU(t)} dt -p \\
& =  \frac{h_{n,l}}{2} \int_{-h_{n,l}^{-1}}^{h_{n,l}} \PK(h_{n,l} t) \frac{\hPZ(t)-\PZ(t)}{\PU(t)} dt  +\frac{h_{n,l}^{-1}}{2} \int_{-h_{n,l}^{-1}}^{h_{n,l}} \PK(h_{n,l} t) \frac{\PZ(t)}{\PU(t)} dt -p \\
& =   \frac{h_{n,l}}{2} \int_{-h_{n,l}^{-1}}^{h_{n,l}} \PK(h_{n,l} t) \frac{\hPZ(t)-\PZ(t)}{\PU(t)} dt  +  \frac{(1-p) h_{n,l}^{-1}}{2} \int_{-h_{n,l}^{-1}}^{h_{n,l}} \PK(h_{n,l} t) \PX(t) dt.  \\
\eea
The triangle inequality and the bias upper bound provided by Equation \eqref{eq:biais} yield:
$$
\abso{\hat{p}_{n,l}-p} \leq \left|\frac{h_{n,l}}{2} \int_{-h_{n,l}^{-1}}^{h_{n,l}} \PK(h_{n,l} t) \frac{\hPZ(t)-\PZ(t)}{\PU(t)} dt \right| + C_{s} R h_{n,l}^{s+1/2}.
$$
The r.h.s. of the upper bound is of the order $h_{n,l}^{s+1/2} = n^{-(s+1/2)/(2s_l+2\nu)}$ while
$\kappa_{n,l} = \beta \sqrt{\log n} n^{-(s_l+1/2)/(2s_l+2\nu)}$. From the definition of the grid on $s$, $l\ge j^\star$ implies that $s_l\le s_{j^\star}$ so that $s_l \le s$. Therefore, we can write:
$$
h_{n,l}^{s+1/2} \{\kappa_{n,l}\}^{-1} \lesssim \{\log n\}^{-1/2} n^{(s_l-s)/(2s_l+2\nu)} \lesssim \frac{1}{\sqrt{\log n}}.
$$
We then deduce that a sufficiently large $n$ exists such that for $l \ge j^\star$, we have $C_s R h_{n,l}^{s+1/2} < \kappa_{n,l}/4$ and
\begin{eqnarray}
\Pro\Bigl( \abso{\hat{p}_{n,l}-p} \ge \frac{\kappa_{n,l}}{2}\Bigr) &\leq &
\Pro\left( \left| \frac{h_{n,l}}{2} \int_{-h_{n,l}^{-1}}^{h_{n,l}^{-1}} \PK(h_{n,l} t) \frac{\hPZ(t)-\PZ(t)}{\PU(t)} dt \right|  \ge \frac{\kappa_{n,l}}{4}\right) \nonumber\\ 
& \leq & \Pro\left( \left|\frac{1}{n} \underbrace{\sum_{k=1}^n \frac{h_{n,l}}{2} \int_{-h_{n,l}^{-1}}^{h_{n,l}^{-1}} \PK(h_{n,l} t) \frac{e^{\ii Z_k t} -\PZ(t)}{\PU(t)} dt}_{:=\xi_{k,l}} \right|  \ge \frac{\kappa_{n,l}}{4}\right).\label{eq:step1}
\end{eqnarray}

\noindent
\textit{\underline{Step 2: application of the Bernstein concentration inequality.}}
To handle the r.h.s. of Inequality \eqref{eq:step1}, we will apply the Bernstein inequality (see Theorem 2.9 of \cite{BLM} and exercice 2.8 therein, a precise statement is given in Theorem \ref{theo:bernstein} of our Appendix \ref{sec:appendix}) to the random variables
$$
\xi_{k,l} = \frac{h_{n,l}}{2} \int_{-h_{n,l}^{-1}}^{h_{n,l}^{-1}} \PK(h_{n,l} t) \frac{e^{\ii Z_k t} -\PZ(t)}{\PU(t)} dt.
$$
We know that the i.i.d. random variables $(\xi_{k,l})_{1 \leq k \leq n}$  are centered and satisfy an almost sure bound:
$$
\forall k \in \{1 \ldots n\} \qquad |\xi_{k,l}| \leq \frac{h_{n,l}}{2} \int_{-h_{n,l}^{-1}}^{h_{n,l}^{-1}} \frac{|\PK(h_{n,l} t)|}{|\PU(t)|} dt.
$$
We now use the assumption that $g \in \Hnu$ to deduce that a $d_0$ exists such that $\PU(t) (1+|t|)^{\nu} \geq d_0$ for all $t \in \RR$. Therefore, a large enough constant $C$ exists such that  for any $k \in \{1 \ldots n\}$:
\bea
 |\xi_{k,l}| & \leq  \frac{h_{n,l}}{2} \int_{-h_{n,l}^{-1}}^{h_{n,l}^{-1}} \frac{|\PK(h_{n,l} t)|(1+|t|)^{\nu}}{|\PU(t)|(1+|t|)^{\nu}} dt\\
 & \leq
  \frac{h_{n,l}}{2 d_0} \int_{-h_{n,l}^{-1}}^{h_{n,l}^{-1}} |\PK(h_{n,l} t)|(1+|t|)^{\nu}dt\\
  & \leq (1+|h_{n,l}^{-1}|)^{\nu}\frac{h_{n,l}}{2 d_0} \int_{-h_{n,l}^{-1}}^{h_{n,l}^{-1}} |\PK(h_{n,l} t)|dt \leq  C h_{n,l}^{- \nu}.
\eea
We can also bound the variance of each $\xi_{k,l}$ and a computation similar to Equation \eqref{eq:var} leads to $\VV(\xi_{k,l}) \leq h_{n,l}^{-2\nu+1}.$ We then apply Theorem \ref{theo:bernstein} with $b=C h_{n,l}^{-\nu}$ and $v= n h_{n,l}^{-2\nu+1}$ and deduce that:
$$
\Pro\left( \left|\frac{1}{n} \sum_{k=1}^n \xi_{k,l}\right| \geq \frac{\kappa_{n,l}}{4} \right) \leq 2 \exp \left( - \frac{n \kappa_{n,l}^2}{32 \left( h_{n,l}^{-2\nu+1}+ C h_{n,l}^{-\nu} \kappa_{n,l}/3)\right)}.
\right)$$
Now, remark that $h_{n,l}^{-2\nu+1} = n^{(2\nu-1)/(2\nu+2s_l)}$ while $h_{n,l}^{-\nu} \kappa_{n,l} =\beta n^{\nu/(2\nu+2s_l)}n^{-(s_l+1/2)/(2\nu+2s_l)} \sqrt{\log n}$. Since we assumed that $\nu>1/2$, the main contribution in the denominator of the exponential bound above is brought by $h_{n,l}^{-2\nu+1}$. We therefore conclude that a large enough $n$ exists such that:
$$
\Pro\left( \left|\frac{1}{n} \sum_{k=1}^n \xi_{k,l}\right| \geq
 \frac{\kappa_{n,l}}{4} \right) \leq 2 \exp \left( - \frac{n \kappa_{n,l}^2}{64  h_{n,l}^{-2\nu+1}} \right) =2 n^{-\beta^2/64},
$$
because the penalty term $\kappa_{n,l}$ has been designed so that 
$\frac{n \kappa_{n,l}^2}{h_n^{-2 \nu+1}} = \beta^2 \log(n)$.
$\hfill  \square$
 \end{proof}

\subsubsection{Proof of Theorem \ref{theo:lepski_p}}
We can achieve the proof of Theorem \ref{theo:lepski_p} and establish the adaptation of our estimator $\hpjn$: Propositions \ref{prop:lepski} and \ref{prop:concentration_p} yield
$$
\E \abso{\hpjn-p}^2 \le\left(\beta^2 \log(n) +C\right) \exp\left(\frac{2\varepsilon}{2s+2\nu}\right) n^{-(2s+1)/(2s+2\nu)} +2  \sum_{l \ge j^\star} n^{-\beta^2/64}.
$$
The size of the grid is bounded from above by $\varepsilon^{-1} s_{max} \log n  $.
Therefore, for $n$ large enough
$$
\E \abso{\hpjn-p}^2 \le(\beta^2 +1)e^{\frac{2\varepsilon}{2s+2\nu}}(\log n)  n^{-(2s+1)/(2s+2\nu)} + \varepsilon^{-1} s_{max} \log n \,  n^{-\beta^2/64}.
$$
It remains to choose the constant $\varepsilon$ and $\beta$. We first pick $\beta$ such that 
\ben
\label{eq:calibr_beta}n^{-\beta^2/64}\le  n^{-(2s+1)/(2s+2\nu)},  
\een
then we choose $\varepsilon$ in order to minimize the constant, regardless the value of $s$,
\ben
\label{eq:calibr_eps}(\beta^2 +1)e^{\frac{2\varepsilon}{2s+2\nu}}+ s_{max} \varepsilon^{-1}. 
\een
These choices yield the desired adaptive property (optimal up to a log term):
$$
\E \abso{\hpjn-p}^2  \lesssim  \left(\log n \right) n^{-(2s+1)/(2s+2\nu)}.
$$

$\hfill  \square$
\begin{rem}
For numerical experiments, the calibration of the parameters $\varepsilon$ and $\beta$ given by \eqref{eq:calibr_beta} and \eqref{eq:calibr_eps} is crucial. Since the value of $s$ is unknown, these inequality have to be true for all $s\in[0,s_{max}]=[0,s_{0}]$. We are driven to an optimal choice of the form
\bea
\beta^\star&=8 \sqrt{ \frac{2s_{max}+1}{2s_{max}+2\nu}} \, \text{and} \, 
\varepsilon^\star=\text{argmin}\left\{((\beta^\star)^2 +1)e^{\frac{\varepsilon}{\nu}}+ s_{max} \varepsilon^{-1} \right\}.
\eea
\end{rem}

\section{Estimation of $f$} 
\label{sec:f}
We keep the same presentation and first briefly describe  the estimator proposed by \cite{vanes2}. We describe our adaptive procedure.
\subsection{Non adaptive approach}
\label{subsec:non_ada_f}
The estimation of $f$ is highly similar to the one of $p$ and relies on the Fourier transform of $U$ and of the empirical data. It exploits the relationship:
$$
\PX(t) = \frac{\PZ(t) - p\PU(t)}{(1-p)\PU(t)},
$$
and therefore uses a plug-in estimator of $p$ and of $\PZ$ with another kernel smoothing strategy. Following \cite{vanes2}, we introduce a second kernel $Q$ whose Fourier transform is denoted by $\PQ$ has its support in $[-1,1]$ and satisfies the following assumptions:
\begin{equation}\label{eq:kernel_f}
\PQ(0)=1, \qquad \forall t \in \RR \qquad |\PQ(t)-1| \leq M |t|^{s}, \qquad \int_{-1}^1|\PQ(t)|^2dt<\infty,
\end{equation}
where $M>0$.
It is worth saying that the choice of the kernel used for this deconvolution is important regarding the numerical results as well as the theoretical ones. For our purpose, we only handle ordinary smooth inverse problem in Sobolev spaces. In that case, it will be enough to handle a very simple kernel $\PQ$, given by the sinc function:
$$
\PQ(t) := \mathbf{1}_{[-1;1]} \qquad \text{so that} \qquad \forall x \in \RR \quad Q(x) = \frac{\sin x}{\pi x} \quad \text{with} \quad Q(0)=\frac{1}{\pi}.
$$
It is immediate to check that this kernel automatically matches the requirement given in Equation \eqref{eq:kernel_f}
 on the smoothness of $\PQ$ around $0$.
We refer to \cite{Delaigle} for a detailed discussion on the influence of the kernel choice from a numerical point of view, and to \cite{CL13} and \cite{LG11} for deeper  insights on functional spaces where the estimation is done (\textit{e.g.} in anisotropic Nikol'skii classes).
The plug-in strategy proposed in \cite{vanes2} reads as follows:
\begin{equation}\label{eq:defestim}
\hat{\Phi}_{X,n} = \frac{\hPZ(t) - \cpn \PU(t)}{(1-\cpn) \PU(t)} \PQ(\delta_n t),
\end{equation}
where $\cpn$ is a preliminary truncated estimation of $p$ that is plugged into Equation \ref{eq:defestim}, with the following rule:
\begin{equation}\label{eq:cpn}
\cpn:= \hat{p}_n \wedge 1-\tau_n.
\end{equation}
The truncation step under $1-\tau_n$ makes it possible to avoid numerical instability when $\hat{p}_n$ is estimated close to $1$. 
It is important to remark that for a theoretical purpose, it is necessary to build a preliminary estimator of $p$ with $\cpn$  independent from the estimated $\PZ$ in \eqref{eq:defestim}. This can be achieved using $\lfloor n/2 \rfloor$ samples to compute $\cpn$ and the remaining ones to build $\hat{\Phi}_{X,n}$ according to our plug-in strategy.
We can now state the minimax consistency result of \cite{vanes2}, which is obtained with a similar bias-variance tradeoff.

\begin{theo}[Theorem 2(i) and 3(i) in \cite{vanes2}]\label{theo:estimf}
Assume $f\in\Hs$ and $g\in\Hnu$ with $\nu > 1$ and set $\tau_n=\log(3n)^{-1}$. Then the choice $h_n = n^{-\frac{1}{2s+2\nu}}$ in \eqref{eq:defpn} and $\delta_n = n^{-1/(2s+2\nu+1)}$ in \eqref{eq:defestim} leads to an estimator $\hat{f}_n$ that satisfies the consistency rate:
$$
\mathbb{E} [\|\hat{f}_n-f\|^2] \lesssim n^{-\frac{(2s)}{2s+2\nu+1}}.
$$
\end{theo}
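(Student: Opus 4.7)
The plan is to move to the Fourier side using Parseval's identity, decompose the error $\hat{\Phi}_{X,n}-\PX$ into a deconvolution variance, a plug-in error coming from $\cpn$, and a kernel bias, and then balance the bandwidth $\delta_n$. Parseval gives $\|\hat{f}_n-f\|_2^2=(2\pi)^{-1}\|\hat{\Phi}_{X,n}-\PX\|_2^2$. Writing $\hat{\Phi}_{X,n}-\PX\,\PQ(\delta_n\cdot)$ over a common denominator, inserting $\hPZ-\PZ$, and using the identity $\PZ-\PU=(1-p)\PU(\PX-1)$ (a direct consequence of \eqref{eq:Fourier}), one obtains the clean decomposition
\begin{equation*}
\hat{\Phi}_{X,n}(t) - \PX(t) = \underbrace{\frac{\PQ(\delta_n t)(\hPZ(t)-\PZ(t))}{(1-\cpn)\PU(t)}}_{=:A_n(t)} + \underbrace{\frac{(\cpn-p)(\PX(t)-1)\PQ(\delta_n t)}{1-\cpn}}_{=:B_n(t)} + \underbrace{(\PQ(\delta_n t)-1)\PX(t)}_{=:C_n(t)}.
\end{equation*}
The triangle inequality reduces the proof to separate upper bounds on $\E\|A_n\|_2^2$, $\E\|B_n\|_2^2$ and $\|C_n\|_2^2$.

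For the kernel bias $C_n$, I split $\R$ according to whether $|t|\leq\delta_n^{-1}$ or not. On the high-frequency zone $\PQ$ vanishes thanks to the support condition in \eqref{eq:kernel_f}, so $f\in\Hs$ yields $\int_{|t|>\delta_n^{-1}}|\PX(t)|^2\, dt\leq R^2\delta_n^{2s}$; on the low-frequency zone the smoothness bound $|\PQ(\delta_n t)-1|\leq M|\delta_n t|^s$ gives the same order after using $\int|\PX|^2|t|^{2s}\, dt\leq R^2$. Hence $\|C_n\|_2^2\lesssim\delta_n^{2s}$. For the deconvolution variance $A_n$, combining $|1-\cpn|\geq\tau_n$, the lower bound $|\PU(t)|\gtrsim(1+|t|)^{-\nu}$ coming from $g\in\Hnu$, the elementary estimate $\E|\hPZ(t)-\PZ(t)|^2\leq n^{-1}$, Fubini's theorem, and the change of variables $u=\delta_n t$ produces
\begin{equation*}
\E\|A_n\|_2^2 \lesssim \frac{1}{n\tau_n^2}\int_{|t|\leq\delta_n^{-1}}|\PQ(\delta_n t)|^2(1+|t|)^{2\nu}\, dt \lesssim \frac{\delta_n^{-(2\nu+1)}}{n\tau_n^2}.
\end{equation*}

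For the plug-in term $B_n$, the sample-splitting described in Section \ref{subsec:non_ada_f} makes $\cpn$ independent of the empirical process building $\hPZ$, so Fubini factorises expectations. Using $|\PX-1|\leq 2$, $|1-\cpn|\geq \tau_n$, and the support of $\PQ$ yields $\E\|B_n\|_2^2\lesssim \E(\cpn-p)^2/(\tau_n^2\delta_n)$. The truncation gives $(\cpn-p)^2\leq (\hat{p}_n-p)^2+\tau_n^2$, and applying Theorem \ref{theo:estimp} with $h_n=n^{-1/(2s+2\nu)}$ yields $\E(\hat{p}_n-p)^2\lesssim n^{-(2s+1)/(2s+2\nu)}$. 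A comparison of exponents shows that $\E\|B_n\|_2^2$ is of strictly smaller order than $\delta_n^{2s}$ as soon as $\delta_n=n^{-1/(2s+2\nu+1)}$, so $B_n$ is absorbed by the dominant terms.

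Balancing the dominant contributions $\delta_n^{2s}$ (bias) against $\delta_n^{-(2\nu+1)}/n$ (variance) produces the optimal $\delta_n=n^{-1/(2s+2\nu+1)}$ and the announced rate $n^{-2s/(2s+2\nu+1)}$. The main obstacle is the nonlinear dependence of $\hat{\Phi}_{X,n}$ on $\cpn$ through the factor $(1-\cpn)^{-1}$: without the truncation at $1-\tau_n$ it would blow up on $\{\hat{p}_n\approx 1\}$, and the choice $\tau_n=(\log(3n))^{-1}$ is what keeps $\tau_n^{-2}$ only logarithmic in both $\E\|A_n\|_2^2$ and $\E\|B_n\|_2^2$ so that the polynomial minimax rate is preserved. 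Sample-splitting is equally essential, since it decouples $\cpn$ from $\hPZ$ and allows the expectation to factor when handling the plug-in step.
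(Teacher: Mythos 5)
Your three-term decomposition of $\hat{\Phi}_{X,n}-\PX$ is algebraically correct, and your bounds for the kernel bias $C_n$ and for $\int|\PQ(\delta_n t)|^2|\PU(t)|^{-2}dt\lesssim\delta_n^{-(2\nu+1)}$ are fine; the route is essentially the standard Fourier-side bias/variance argument. The genuine gap is in how you handle the random factor $(1-\cpn)^{-1}$. Bounding it deterministically by $\tau_n^{-1}=\log(3n)$ inside the \emph{dominant} term $A_n$ gives $\E\|A_n\|_2^2\lesssim\tau_n^{-2}\,n^{-1}\delta_n^{-(2\nu+1)}=(\log 3n)^2\,n^{-2s/(2s+2\nu+1)}$, i.e.\ the announced rate only up to an extra $(\log n)^2$ factor. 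Since $\lesssim$ in the statement means ``up to a constant independent of $n$'', this does not prove Theorem \ref{theo:estimf} as written (and the log-free rate is what is actually used downstream, e.g.\ in Proposition \ref{prop:lepski2} via Proposition \ref{prop:mieux}); your closing remark that the logarithmic loss ``preserves the polynomial rate'' concedes precisely the point that needs to be removed. The missing idea is a localization on the event where $\cpn$ is close to $p$: split on $\{|\hat p_n-p|\le\Psi_n\}$ for some $\Psi_n\to0$. On that event $1-\cpn\ge 1-p-\Psi_n\ge(1-p)/2$ for $n$ large, so $(1-\cpn)^{-2}$ is bounded by a constant and no $\tau_n^{-2}$ is paid in $A_n$ (nor in $B_n$); on the complement you may use your crude $\tau_n^{-2}$ bound, multiplied by a Bernstein-type deviation bound showing $\PP(|\hat p_n-p|>\Psi_n)$ decays faster than any polynomial (this is where the sample splitting, making $\hat p_n$ independent of $\hPZ^{(2)}$, is genuinely needed -- not in your $B_n$ step, which is purely deterministic). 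This is exactly the refinement carried out in the paper's Appendix, see the auxiliary sequence $\Psi_n$ in the proofs of Proposition \ref{prop:mieux} and Lemma \ref{lem:A2}.

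A secondary slip: the inequality $(\cpn-p)^2\le(\hat p_n-p)^2+\tau_n^2$, if kept, injects a term $\tau_n^2/(\tau_n^2\delta_n)=\delta_n^{-1}$ into your bound for $\E\|B_n\|_2^2$, which diverges rather than being ``of strictly smaller order than $\delta_n^{2s}$''. The correct observation is that for $n$ large enough $p\le 1-\tau_n$, so truncating at $1-\tau_n$ can only move $\hat p_n$ towards $p$ and $(\cpn-p)^2\le(\hat p_n-p)^2$; with that (and the localization above) your exponent comparison for $B_n$ is valid.
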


It is shown in \cite{vanes2} that such estimator is minimax optimal  under the additional assumptions that $s\ge 1/2$ and $|\phi'(t)|(1 +|t|^\nu)\le d_2$ for all $t\in \R$ and $d_2>0$.
In particular, we note that the consistency rate corresponds to the standard rate of ordinary smooth deconvolution inverse problem (see \textit{e.g.} \cite{Fan}).
Again, we stress the fact that the proposed estimator highly depends on $s$ through $\delta_n= n^{-1/(2s+2\nu+1)}$, which is unknown in practice and should be estimated. The next paragraph describes our proposed strategy.

\subsection{Adaptive estimation of $f$}
\label{subsec:lepskif}
\paragraph{Grid on $s$}
The adaptation follows the same strategy as the one for $p$, even though significantly harder from a theoretical point of view.
We still use the same grid  $\mathcal{S}_n$:
$s_0>s_1>\dots>s_{k_n}\ge 0,$ such that for every $0\le j\le k_n$, 
$s_j=s_0-j\frac{\varepsilon}{\log n},$
and the notation $h_{n,j}$ for the bandwidth parameter associated to the optimal estimation of $p$ in $\Hs$ when $s=s_j$, \textit{i.e.} we use
$
h_{n,j} := n^{-1/(2s_j+2\nu)}.
$ 
A second bandwidth parameter associated to the estimation of $f$ is defined by $\delta_{n,j}$, $\forall 0\le j\le k_n$:
\begin{equation}
\label{eq:def_deltanl}
\delta_{n,j} := n^{-1/(2s_j+2\nu+1)} .
\end{equation}

With these two bandwidth parameters derived from a regularity $s_j$, we build a preliminary estimator of $p$ with the observations $(Z_i)_{1 \leq i \leq \lfloor n/2 \rfloor}$:

\begin{equation}\label{eq:pnj1}
\hat{p}^{(1)}_{n,j} = 0\vee\left\{  \left\{ \frac{h_{n,j}}{2} \int_{-h_{n,j}^{-1}}^{h_{n,j}^{-1}} \PK(h_{n,j}t) \frac{\hPZ^{(1)}(t)}{\PU(t)} dt \right\}\wedge 1-\tau_n\right\}
\end{equation}
where $\hPZ^{(1)}(t)$ and $\hPZ^{(2)}(t)$ are two unbiased \textit{independent} estimates of $\PZ(t)$:
$$
\forall t \in \RR \qquad \hPZ^{(1)}(t) := \frac{1}{\lfloor n/2\rfloor} \sum_{k=1}^{\lfloor n/2\rfloor} e^{\ii t Z_k} \qquad \text{and} \qquad 
 \hPZ^{(2)}(t) := \frac{1}{n-1-\lfloor n/2\rfloor} \sum_{k=\lfloor n/2\rfloor+1}^{n} e^{\ii t Z_k}.
$$
Theorem \ref{theo:estimf} entails for a  good choice of the truncation parameter introduced in Equation \eqref{eq:cpn}
and we choose
\begin{equation}\label{eq:taun}
\tau_n:=\log(n)^{-a},
\end{equation} for $a>0$.
Then, we use $\hat{p}^{(1)}_{n,j}$ to obtain an estimator of the empirical Fourier transforms of $X$ by:
\begin{equation}\label{eq:hatf}
\hPXnj :=\frac{ \hPZ^{(2)}(t) - \hat{p}^{(1)}_{n,j} \PU(t)}{(1-\hat{p}^{(1)}_{n,j}) \PU(t)} \PQ(\dnj t)
\end{equation}
The application of the Lepskii method relies on a penalty term, which is defined as
\begin{equation}\label{eq:penalty2}
\rho_{n,l} := \beta (\log n)^{a+1/2} n^{-s_l/(2s_l+2\nu+1)}.
\end{equation}
This penalty is the minimax risk of estimation of $f$ in $\Hs$ when $s_l=s$, up to a log term (see Theorem \ref{theo:estimf}). We recall that for the estimation of $p$, the penalty was associated with a supplementary $(\log n)^{1/2}$ term. However, such a penalty is not strong enough to obtain a good concentration inequality for the estimator of $f$ (see Proposition \ref{prop:concentration_f} below).

\paragraph{Lepski's rule for the estimation of $f$}
We use the penalization introduced in \eqref{eq:penalty2} to define:
\begin{equation*}
\hjf := \inf \{ 0 \leq j \leq k_n , \quad \forall l > j\, :\quad \|\hat{f}_{n,j}-\hat{f}_{n,l}\|_{_2} < \rho_{n,l} \}.
\end{equation*}
Thanks to the Parseval identity, this can be also formulated in terms of the Fourier transforms:
\begin{equation}\label{eq:defln}
\hjf := \inf \{ 0 \leq j\leq k_n ,\quad\forall l > j\, :\quad \|\hPXnj -\hPXnl  \|_{_2} < \rho_{n,l} \}.
\end{equation}
The estimator derived from the penalization above is then written as $\hPXn{\hjf}$ where we used the selection of  $\hjf$ in \eqref{eq:defln} with the definition of $\hPXn{\hjf}$ given in \eqref{eq:hatf}. It leads to the estimation of $f$ itself using the Fourier reconstruction formula:
\begin{equation}
\label{def:h_f_h_j}
\hfjn(x) := \frac{1}{2\pi} \int_{\RR} e^{- \ii t x} \hPXn{\hjf}(t) dt.
\end{equation}

Again, we start with the statement of a proposition that links the average performance of $\hPXn{\hjf}$ with a family of  deviation inequalities.
\begin{pro}\label{prop:lepski2}
Let $f \in \Hs$ and $g$ a known density in $\Hnu$, then  the estimator $\hfjn$ satisfies:
$$
\E \|\hfjn - f\|_{_2} \lesssim (\log n)^{a+1/2} n^{-s/(2s+2\nu+1)} + \sqrt{ \sum_{l \ge j^{\star}} \Pro\Bigl(\|\hat{f}_{n,l} - f\|_{_2}   \ge \frac{\rho_{n,l}}{2}} \Bigr).
$$
\end{pro}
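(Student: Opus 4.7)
My plan is to mirror the argument of Proposition \ref{prop:lepski}, splitting the expected $\mathbb{L}^2$-risk according to whether the Lepski-selected index $\hjf$ under- or over-shoots the oracle index $j^\star$ associated with $s^\star$ in \eqref{def:sstar}:
\begin{equation*}
\E \|\hfjn - f\|_2 = \E \|\hfjn - f\|_2 \un_{\hjf \le j^\star} + \E \|\hfjn - f\|_2 \un_{\hjf > j^\star}.
\end{equation*}
The first term should match the deterministic rate $(\log n)^{a+1/2} n^{-s/(2s+2\nu+1)}$, and the second should produce the sum of deviation probabilities.

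On the event $\{\hjf \le j^\star\}$, the $\mathbb{L}^2$ triangle inequality gives $\|\hfjn - f\|_2 \le \|\hfjn - \hat{f}_{n,j^\star}\|_2 + \|\hat{f}_{n,j^\star} - f\|_2$. The first summand is bounded by $\rho_{n,j^\star}$ directly from the definition \eqref{eq:defln} of $\hjf$ (the case $\hjf = j^\star$ giving $0$, and $\hjf < j^\star$ giving the Lepski inequality evaluated at $l = j^\star$). Writing $s^\star = s - (s - s^\star)$ with $0 \le s - s^\star \le \varepsilon/\log n$, the same calibration as the one performed on $\kappa_{n,j^\star}$ in the proof of Proposition \ref{prop:lepski} yields $\rho_{n,j^\star} \lesssim (\log n)^{a+1/2} n^{-s/(2s+2\nu+1)}$. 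For the second summand, Theorem \ref{theo:estimf} applied with the truncation $\tau_n = (\log n)^{-a}$ in place of $\log(3n)^{-1}$ (which only propagates an extra $(\log n)^{2a}$ variance factor) gives $\E \|\hat{f}_{n,j^\star} - f\|_2^2 \lesssim (\log n)^{2a} n^{-2s/(2s+2\nu+1)}$, and Jensen's inequality delivers the corresponding $\mathbb{L}^1$ bound.

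On the complementary event $\{\hjf > j^\star\}$, I would reproduce the chain of inclusions from \eqref{eq:inclusion}: the monotonicity $l \mapsto \rho_{n,l}$ (immediate since $s \mapsto s/(2s+2\nu+1)$ is increasing while $s_l$ decreases with $l$), combined with the triangle inequality in $\mathbb{L}^2$ applied to $\|\hat{f}_{n,j^\star} - \hat{f}_{n,l}\|_2$, yields
\begin{equation*}
\{\hjf > j^\star\} \subset \bigcup_{l \ge j^\star} \bigl\{\|\hat{f}_{n,l} - f\|_2 \ge \rho_{n,l}/2\bigr\}.
\end{equation*}
The Cauchy-Schwarz inequality then bounds
\begin{equation*}
\E \|\hfjn - f\|_2 \un_{\hjf > j^\star} \le \bigl(\E \|\hfjn - f\|_2^2\bigr)^{1/2} \Bigl(\sum_{l \ge j^\star} \PP\bigl(\|\hat{f}_{n,l} - f\|_2 \ge \rho_{n,l}/2\bigr)\Bigr)^{1/2}.
\end{equation*}

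The main obstacle, and the key difference with the scalar case of Proposition \ref{prop:lepski} where the trivial bound $|\hat{p}_{n,l}-p|\le 1$ was free, is to control the prefactor $(\E \|\hfjn - f\|_2^2)^{1/2}$. A direct Parseval-based estimate, using the support constraint $[-\delta_{n,j}^{-1}, \delta_{n,j}^{-1}]$ of $\PQ(\delta_{n,j}\pot)$, the lower bound $|\PU(t)|\gtrsim(1+|t|)^{-\nu}$ coming from $g \in \Hnu$, and the truncation $\hat{p}^{(1)}_{n,j}\le 1 - \tau_n$, only delivers $\E \|\hat{f}_{n,j}\|_2^2 \lesssim (\log n)^{2a}\,\delta_{n,k_n}^{-2\nu-1} \lesssim (\log n)^{2a}\,n$. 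This polynomial growth is harmless, however, because the subsequent concentration inequality (an $\mathbb{L}^2$-analog of Proposition \ref{prop:concentration_p} applied to $\hPXnl - \PX$) will provide each tail probability with a decay $n^{-C\beta^2}$, whose exponent can be chosen large enough via the calibration of $\beta$ to absorb the prefactor into the $\lesssim$ of the stated inequality.
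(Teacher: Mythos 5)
Your decomposition, the Lepski monotonicity and calibration arguments, and the chain of inclusions leading to the bound via Cauchy--Schwarz on $\{\hjf>j^\star\}$ all match the paper. The one genuine gap is in your treatment of the prefactor $\bigl(\E\|\hfjn - f\|_2^2\bigr)^{1/2}$. Your Parseval bound gives $\E\|\hfjn - f\|_2^2 \lesssim (\log n)^{2a}\,n$, a \emph{polynomial} prefactor of order $(\log n)^{a}\sqrt{n}$ in front of the square-rooted sum of deviation probabilities. Since the paper explicitly defines $\lesssim$ as ``up to a multiplicative constant independent of $n$,'' this does not prove the Proposition as stated, which has no polynomial factor in front of $\sqrt{\sum_{l\ge j^\star} \Pro(\cdots)}$. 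You obtain a strictly weaker inequality and then argue it is harmless for the downstream Theorem~\ref{theo:lepski_f}; that may be true, but it is a claim about the final result, not a proof of the intermediate Proposition.

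What is missing is the refined nonasymptotic risk bound that the paper proves separately (Proposition~\ref{prop:mieux} in the appendix), namely $\E\|\hat{f}_{n,j} - f\|_2^2 \le \Phi(s,R)\, n^{-2\min(s_j,s)/(2s_j+2\nu+1)}$ for \emph{every} $j$ in the grid, with $\Phi$ continuous in $s$. With this bound the paper writes $\E\|\hfjn - f\|_2^2 \le \sum_{j=0}^{k_n}\E\|\hat{f}_{n,j}-f\|_2^2$ and then shows, by splitting the grid at $s_j = 1$ and using a geometric-series estimate, that the resulting sum is $O(1)$ rather than polynomially large. That grid-summability argument is what collapses the prefactor to a constant and lets the stated inequality go through with the bare square root. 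Without a per-$j$ risk bound that actually decays in $n$ (uniformly in $j$, up to summability over the grid), the Cauchy--Schwarz step does not close. You should therefore either prove and invoke an analog of Proposition~\ref{prop:mieux}, or state and prove a weaker version of the Proposition carrying your polynomial prefactor and then carry that prefactor honestly into the proof of Theorem~\ref{theo:lepski_f}.
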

\begin{proof}
The proof is close to the one of Proposition \ref{prop:lepski}. We denote by $s^\star=s_{j^\star}$ the closest element from $s$ from below in the grid $\mathcal{S}_n$ (see Equation \eqref{def:sstar}). Following the same guidelines, we obtain that:

$$
\EE \|\hfjn - f\|_{_2} =
\EE  \left( \|\hfjn - f\|_{_2} \mathbf{1}_{\hjf \leq j^{\star}} +
\|\hfjn - f\|_{_2} \mathbf{1}_{\hjf > j^{\star}} \right) .
$$
\underline{On the event $\{\hjf \leq l^\star\}$:} 
we first apply the triangle inequality
$$\E(\|\hfjn - f\|_{_2} \mathbf{1}_{\hjf \leq j^{\star}}) 
\le\E \left[\left( \|\hfjn-\hat{f}_{j^\star}\|_{_2}+\|\hat{f}_{j^\star}-f\|_{_2}\right)  \mathbf{1}_{\hjf \leq j^{\star}}\right]$$
On ${\hjf \leq j^{\star}}$, the first term is upper bounded by
$$\begin{aligned}
\rho_{n,j^\star}
&=\beta (\log n)^{a+1/2} n^{-s_{j^\star}/(2s_{j^\star}+2\nu+1)}\\
&=\beta (\log n)^{a+1/2} n^{-s/(2s+2\nu+1))} \exp\Bigl[\bigl( \frac{s}{2s+2\nu+1}-\frac{s_{j^\star}}{2s_{j^\star}+2\nu+1}\bigr)\log n \Bigr]\\
&\le \beta (\log n)^{a+1/2} n^{-s/(2s+2\nu+1))} \exp\Bigl( \frac{\varepsilon}{2s+2\nu+1}\Bigr).
\end{aligned}$$
For the second term, we apply the results of the non adaptive estimator in Theorem \ref{theo:estimf}:
$$\E\left(\|\hat{f}_{j^\star}-f\|_{_2}  \mathbf{1}_{\hjf \leq j^{\star}}\right)\le Cn^{-s_{j^\star}/(2s_{j^\star}+2\nu+1)},
$$ and deduce that:
\bean\label{eq:cas12}
\E(\|\hfjn - f\|_{_2} \mathbf{1}_{\hjf \leq j^{\star}}) 
&\le \left(\beta (\log n)^{a+1/2} +C\right) n^{-s/(2s+2\nu+1))} \exp\Bigl( \frac{\varepsilon}{2s+2\nu+1}\Bigr).
\eean
where $C$ is a positive constant independent from $\beta$, $\varepsilon$ and $n$.
\medskip

\noindent
\underline{On the event $\{\hjf > l^\star\}$:} 
On this event, the triangle inequality yields $\|\hat{f}_{n,l} - \hat{f}_{n,j}\|_{_2} \leq \|\hat{f}_{n,l} - f\|_{_2}+\|\hat{f}_{n,j} - f\|_{_2}$,  so that 
$$
\left\{\hjf > j^{\star}\right\} \subset \bigcup_{l \geq j^{\star}}\Bigl\{   \|\hat{f}_{n,l} - f\|_{_2}   \ge \frac{\rho_{n,l}}{2} \Bigr\}.
$$
Therefore, the Cauchy-Schwarz inequality and the inclusion above yields
\begin{eqnarray*}
\E(  \|\hat{f}_{n,\hjf} - f\|_{_2} \mathbf{1}_{\hjf > j^\star} )& \leq & 
\sqrt{\E(  \|\hat{f}_{n,\hjf} - f\|_{_2}^2)} \sqrt{ \E( \mathbf{1}_{\hjf > j^\star} )} \\
& \leq & \sqrt{\sum_{j=0}^{k_n} \E \left(  \|\hat{f}_{n,j} - f\|_{_2}^2\right)}\sqrt{ \E \left( \mathbf{1}_{\bigcup_{l \geq j^\star}\Bigl\{   \|\hat{f}_{n,l} - f\|_{_2}   \ge \frac{\rho_{n,l}}{2} \Bigr\}} \right)} 
\end{eqnarray*}
We now use a refinement of Theorem \ref{theo:estimf} given by Proposition \ref{prop:mieux} in the appendix Section \ref{sec:appendix} to bound $\E( \|\hat{f}_{n,j} - f\|_{_2}^2)$:
\begin{eqnarray*}
\E(  \|\hat{f}_{n,\hjf} - f\|_{_2} \mathbf{1}_{\hjf > j^\star} )& \leq &
\sqrt{\sum_{j=0}^{k_n} \Phi(s,R) n^{-2\min(s_j,s)/(2s_j+2\nu+1)} } \sqrt{ \E \left( \mathbf{1}_{\bigcup_{l \geq j^\star}\Bigl\{   \|\hat{f}_{n,l} - f\|_{_2}   \ge \frac{\rho_{n,l}}{2} \Bigr\}} \right)} \\
\end{eqnarray*}
 Since $k_n \leq s_0 \epsilon^{-1} \log(n)$ and $\Phi$ is continuous in $s$, then $C=\{\sup_{s\in[0,s_0]}\Phi(s,R)\}^{1/2}
 \sqrt{s_0\varepsilon^{-1}} $ is a finite constant. 
Setting now $\alpha= s_0 \epsilon^{-1}$, we shall now study the next sum:
$$
\sum_{j=0}^{k_n} n^{-s_j/(s_j+\nu+1/2)} = \sum_{j=0}^{\alpha \log(n)} n^{-\frac{j \epsilon / \log(n)}{j \epsilon/\log(n)+\nu+1/2}}.
$$
This sum may be divided into two parts: one with $s_j \leq 1$ and the other part with smoothness $s_j \geq 1$. 
If $j \epsilon / \log(n) \geq 1$, then we use the rough bound $s_j/(s_j+\nu+1/2) \geq 1/(\nu+3/2)$ and obtain that:
$$
\sum_{j=\epsilon^{-1} \log(n)}^{k_n} n^{-s_j/(s_j+\nu+1/2)} \leq \alpha \log(n) n^{-1/(\nu+3/2)},
$$
which is a bounded sequence.
Now, if $j \epsilon / \log(n) \leq 1$, we have:
$$
\sum_{j=0}^{\epsilon^{-1}\log(n)} n^{-\frac{j \epsilon / \log(n)}{j \epsilon/\log(n)+\nu+1/2}} \leq 
\sum_{j=0}^{\epsilon^{-1}\log(n)} e^{-\frac{j \epsilon  }{j \epsilon/\log(n)+\nu+1/2}} \leq 
\sum_{j=0}^{\epsilon^{-1}\log(n)} e^{-\frac{ \epsilon  }{\nu+3/2} j} \leq \frac{1}{1-e^{-\frac{\epsilon}{3/2+\nu}}}.
$$
Therefore, a constant $C(\epsilon,s_0,R)$ exists such that:
 
\begin{equation}\label{eq:cas22}
\E(  \|\hat{f}_{n,\hjf} - f\|_{_2} \mathbf{1}_{\hjf > j^\star} ) \leq C(\epsilon,s_0,R)  \sqrt{\sum_{l \geq j^\star} \PP \left( \|\hat{f}_{n,l} - f\|_{_2}   \ge \frac{\rho_{n,l}}{2} \right) }.
\end{equation}
The conclusion follows from \eqref{eq:cas12} and \eqref{eq:cas22}.
\hfill $\square$
\end{proof}

\subsubsection{Bias upper bound}

Once again, the main difficulty of the proof is to obtain a deviation inequality on each event:
$$\Omega_{n,l} := \left\{ \|\hat{f}_{n,l} - f\|_{_2}   \ge \frac{\rho_{n,l}}{2}\right\} = \left\{ \|
\hPXn{l} - \PX\|_{_{2}}^2  \ge \frac{\rho_{n,l}^2}{4}\right\}.
$$
where the last equality comes from the Parseval identity.
We begin by the following statement:
\begin{pro}\label{prop:inclusion}
Assume $l \geq j^\star$, for $n$ large enough one has
$$
\Omega_{n,l} \subset \left\{   \|\hPXn{l} - \EE \hPXn{l}\|_{_{2}}^2  \ge \frac{\rho_{n,l}^2}{8}\right\}.
$$
\end{pro}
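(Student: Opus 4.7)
\textbf{Proof plan for Proposition \ref{prop:inclusion}.} The plan is to reduce the event of a large estimation error to an event on the centered fluctuation by absorbing the bias of $\hPXn{l}$ into $\rho_{n,l}$, which is allowed because the penalty carries an extra logarithmic factor compared with the minimax rate. Concretely, the bound $\sqrt{A}\le \sqrt{B}+\sqrt{C}$ applied to the $L^2$ decomposition
$$
\|\hPXn{l}-\PX\|_{_2}\;\le\;\|\hPXn{l}-\EE\hPXn{l}\|_{_2}\;+\;\|\EE\hPXn{l}-\PX\|_{_2}
$$
implies that, on $\Omega_{n,l}=\{\|\hPXn{l}-\PX\|_{_2}\ge \rho_{n,l}/2\}$, one has $\|\hPXn{l}-\EE\hPXn{l}\|_{_2}\ge \rho_{n,l}/2 - \|\EE\hPXn{l}-\PX\|_{_2}$. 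Hence it suffices to show that, for $n$ large and uniformly over $l\ge j^\star$,
$$
\|\EE\hPXn{l}-\PX\|_{_2}\;\le\;\bigl(1-1/\sqrt{2}\bigr)\,\rho_{n,l}/2,
$$
for this yields $\|\hPXn{l}-\EE\hPXn{l}\|_{_2}\ge \rho_{n,l}/(2\sqrt{2})$, i.e. the target inclusion.

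Next I would split the bias into a pure kernel contribution and a plug-in contribution via
$$
\|\EE\hPXn{l}-\PX\|_{_2}\;\le\;\|\EE\hPXn{l}-\PX\,\PQ(\delta_{n,l}\,\cdot)\|_{_2}\;+\;\|\PX\,\PQ(\delta_{n,l}\,\cdot)-\PX\|_{_2}.
$$
The second, deterministic, term is bounded using \eqref{eq:kernel_f} and $f\in\Hs$: $\int|\PX(t)|^2|\PQ(\delta_{n,l}t)-1|^2dt \le M^2\delta_{n,l}^{2s}\int|t|^{2s}|\PX(t)|^2dt\le M^2 R^2 \delta_{n,l}^{2s}$. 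For $l\ge j^\star$, $s_l\le s$ gives $\delta_{n,l}^s\le n^{-s_l/(2s_l+2\nu+1)}$, so this part is of order $\rho_{n,l}/[\beta(\log n)^{a+1/2}]$, hence $o(\rho_{n,l})$.

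For the first (plug-in) term, I would use the independence of $\hpln$ and $\hPZ^{(2)}$ and condition on $\hpln$. A direct substitution of $\PZ=\PU[p+(1-p)\PX]$ gives
$$
\EE\bigl[\hPXn{l}\bigm|\hpln\bigr]-\PX(t)\PQ(\delta_{n,l}t)\;=\;\frac{p-\hpln}{1-\hpln}\bigl(1-\PX(t)\bigr)\PQ(\delta_{n,l}t),
$$
so that, after taking expectations and using $|1-\PX|\le 2$ together with the fact that $\PQ(\delta_{n,l}\cdot)$ is supported on $[-\delta_{n,l}^{-1},\delta_{n,l}^{-1}]$,
$$
\|\EE\hPXn{l}-\PX\,\PQ(\delta_{n,l}\,\cdot)\|_{_2}\;\le\;2\,\|\PQ\|_{_2}\,\delta_{n,l}^{-1/2}\,\EE\!\left|\frac{p-\hpln}{1-\hpln}\right|.
$$
The truncation \eqref{eq:cpn} gives $1-\hpln\ge \tau_n=(\log n)^{-a}$, while for $l\ge j^\star$ the bias-variance analysis of Section \ref{sec:p} (inherited from Theorem \ref{theo:estimp} applied with the true smoothness $s\ge s_l$) yields $\EE|p-\hpln|\le \sqrt{\EE(p-\hpln)^2}\lesssim n^{-(s_l+1/2)/(2s_l+2\nu)}$. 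Combining these three ingredients gives
$$
\|\EE\hPXn{l}-\PX\,\PQ(\delta_{n,l}\,\cdot)\|_{_2}\;\lesssim\;(\log n)^{a}\,n^{1/(2(2s_l+2\nu+1))-(s_l+1/2)/(2s_l+2\nu)},
$$
and a direct comparison of exponents shows that this quantity divided by $\rho_{n,l}=\beta(\log n)^{a+1/2}n^{-s_l/(2s_l+2\nu+1)}$ behaves like $(\log n)^{-1/2}n^{-\eta_l}$ with $\eta_l=(2s_l+1)/[2(2s_l+2\nu)(2s_l+2\nu+1)]>0$, which tends to $0$ uniformly over the finite grid as $n\to\infty$.

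The main obstacle is the third step: one must keep track of the competing effects of the plug-in denominator (which only forces $1-\hpln\ge \tau_n$) and of the kernel reconstruction (which inflates the $L^2$ norm by $\delta_{n,l}^{-1/2}$), while still getting a bound that is small \emph{compared with $\rho_{n,l}$ uniformly in $l\ge j^\star$}. This is precisely where the definition \eqref{eq:taun} of $\tau_n$ with an arbitrary $a>0$ and the extra $(\log n)^{1/2}$ in the penalty \eqref{eq:penalty2} enter; both are there to ensure that the plug-in contribution to the bias is negligible with respect to $\rho_{n,l}$ for every $l\ge j^\star$ as soon as $n$ is large enough, which closes the argument.
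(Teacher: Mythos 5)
Your proposal is correct and follows essentially the same route as the paper: the same triangle-inequality reduction, the same explicit computation of $\EE[\hPXn{l}\mid\hpln]$ by conditioning on $\hpln$ and using the independence of $\hPZ^{(2)}$, and the same split of the bias into a plug-in piece proportional to $\frac{p-\hpln}{1-\hpln}(1-\PX)\PQ(\delta_{n,l}\cdot)$ and a pure kernel piece $(\PQ(\delta_{n,l}\cdot)-1)\PX$, the latter bounded via $f\in\Hs$ and \eqref{eq:kernel_f} exactly as in the paper.

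The one place where you genuinely diverge is the treatment of the plug-in term. The paper invokes the auxiliary Lemma~\ref{lem:A2}, whose proof splits on a concentration event to show that $\EE\bigl[(p-\hpln)^2/(1-\hpln)^2\bigr]\lesssim h_{n,l}^{2s_l+1}$ with \emph{no} $\tau_n^{-2}$ inflation, so that $T_{1,1}\lesssim h_{n,l}^{2s_l}\delta_{n,l}^{-1}\lesssim h_{n,l}^{2s_l}=o(\rho_{n,l}^2)$. You instead use the crude deterministic bound $1-\hpln\ge\tau_n=(\log n)^{-a}$, which costs a factor $(\log n)^a$, and then observe that this is exactly compensated by the $(\log n)^{a+1/2}$ built into $\rho_{n,l}$, leaving a residual $(\log n)^{-1/2}n^{-\eta_l}\to 0$ with $\eta_l$ bounded below on the grid. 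This is more elementary (it replaces a concentration argument by a one-line truncation bound) and makes explicit why the penalty carries an $a$-dependent $\log$ factor, but it relies on that $\log$ slack in a way the paper's route does not. Both arguments are valid; the paper's gives a slightly sharper bias bound, yours is self-contained within the proof of the proposition. One small plus of your version: you state the correct target $\|\EE\hPXn{l}-\PX\|_2\le(1-1/\sqrt{2})\rho_{n,l}/2$ up front, which is what the triangle inequality actually requires, whereas the paper's claim that $T_1\le\rho_{n,l}^2/8$ suffices has the constant slightly off (harmless since $T_1$ was shown to be $o(\rho_{n,l}^2)$, but worth noting).
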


\begin{proof} The main ingredient of the proof uses the bias-variance decomposition of  the squared $\LL^2$ norm. Hence, we define
\begin{equation}\label{eq:biasf}
T_1 := \| \E\hPXn{l} - \PX\|_{_{2}}^2  \qquad \text{and} \qquad T_2 := \mathbb{V}(\hPXn{l}) = \E \| \E(\hPXn{l})  -\hPXn{l} \|_{_{2}}^2.
\end{equation}
A key remark comes from the fact that $\hPZ^{(2)}(t)$ is built with a sample of $n-\lfloor n/2 \rfloor $ observations, that are independent of the one used to estimate $p$ with $\hat{p}_{n,l}^{(1)}$. We can write that:
\begin{eqnarray*}
\EE \hPXn{l}(t) 
& = &   \EE \left[\frac{p \PU(t) + (1-p) \PU(t) \PX(t) -\hat{p}_{n,l}^{(1)} \PU(t)}{(1-\hat{p}_{n,l}^{(1)}) \PU(t)}\PQ(\dnl t)  \right] \\
& = &   \EE \left[\frac{ p-\hat{p}_{n,l}^{(1)}}{1-\hat{p}_{n,l}^{(1)}}\right] \PQ(\dnl t) +  \left[ \EE \left[\frac{1-p}{1-\hat{p}_{n,l}^{(1)}}\right]   \PQ(\dnl t) -  1 \right] \PX(t) + \PX(t)  \\
& = & \EE \left[\frac{ p-\hat{p}_{n,l}^{(1)}}{1-\hat{p}_{n,l}^{(1)}}\right] \PQ(\dnl t) (1-\PX(t))+ [\PQ(\dnl t)-1] \PX(t) +  \PX(t). \\
\end{eqnarray*}
Using $(a+b)^2 \leq 2 a^2 + 2 b^2$, the bias term $T_1$ defined in \eqref{eq:biasf} is upper bounded by
$$
T_1 \leq  \underbrace{2 \left\{\EE \left[\frac{ p-\hat{p}_{n,l}^{(1)}}{1-\hat{p}_{n,l}^{(1)}}\right]\right\}^2 \int_{\RR}  \left| \PQ(\dnl t) (1-\PX(t)) \right|^2 dt}_{:=T_{1,1}} + \underbrace{
2 \int_{\RR}  \left| [\PQ(\dnl t)-1] \PX(t)   \right|^2 dt}_{:=T_{1,2}}, 
$$
We may apply the Cauchy-Schwarz inequality and Lemma \ref{lem:A2} of the appendix, Section \ref{sec:appendix} and obtain:
$$
 \left\{\EE \left[\frac{ p-\hat{p}_{n,l}^{(1)}}{1-\hat{p}_{n,l}^{(1)}}\right]\right\}^2 \leq \EE \left[\frac{ p-\hat{p}_{n,l}^{(1)}}{1-\hat{p}_{n,l}^{(1)}}\right]^2 \lesssim h_{n,l}^{2 s_l+1}.
$$
The function $\lvert\PX\lvert$ is bounded by $1$ so that
$$
T_{1,1} \lesssim h_{n,l}^{2s_l+1} \int_{\RR} \left| \PQ(\dnl t) (1-\PX(t)) \right|^2 dt \lesssim h_{n,l}^{2s_l+1} \int_{\RR} \left| \PQ(\dnl t)  \right|^2 dt \lesssim h_{n,l}^{2s_l+1} \dnl^{-1} \|\PQ\|_{_2}^2.
$$
Moreover, for a given smoothness parameter $s_l$, the estimation of $p$ is ``easier" (faster) than the estimation of $f$, which is translated by $h_{n,l} = o \left( \dnl \right)$. We conclude that:
$$T_{1,1} \lesssim h_{n,l}^{2s_l} \sim n^{-2 s_l/(2s_l+2\nu)} = o(\rho^2_{n,l}).$$
The upper bound of the second term $T_{1,2}$ uses the smoothness assumption on $f \in \Hs$ and the construction of the kernel $Q$: applying Equation \eqref{eq:kernel_f} with $\alpha=s$, we obtain:
$$
T_{1,2} \leq 2 \int_{\RR} | \dnl t|^{2 s} |\PX(t)|^2 dt \lesssim \dnl^{2s} =\mathcal{O} \left( n^{-2s/(2s_l+2\nu+1)} \right).
$$
Since $l \geq j^\star$, we deduce that $s_l \leq s$ and the additional log term in the definition of $\rho_{n,l}$ in \eqref{eq:penalty2} permits to conclude that
$$
T_{1,2} \leq  \dnl^{2s_l} =o(\rho_{n,l}^2).
$$
Therefore, $T_1$ is smaller than $\rho_{n,l}^2/8$ for $n$ large enough.
\\
We can conclude the proof using the triangle inequality
$$
 \| \hPXn{l} - \EE \hPXn{l} \|_{_{2}} \ge 
 \| \hPXn{l} - \PX\|_{_{2}}    - \| \EE \hPXn{l} - \PX\|_{_{2}}.
$$
Hence
$$
\| \hPXn{l} - \PX\|_{_{2}}  \geq \frac{\rho_{n,l}}{2 }  \Longrightarrow  \| \hPXn{l} - \EE \hPXn{l} \|_{_{2}} \geq  \frac{\rho_{n,l}}{2 }  - \sqrt{T_1} \geq \frac{\rho_{n,l}}{2 \sqrt{2}}.
$$
\hfill $\square$
\end{proof}

\subsubsection{Concentration inequality}

From Proposition \ref{prop:inclusion}, we can see that the Lepskii rule will perform well if we succeed in bounding the probability 
$$
\PP \left(   \|\hPXn{l} - \EE \hPXn{l}\|_{_{2}}^2  \ge \frac{\rho_{n,l}^2}{8} \right).
$$
For a given $t \in \RR$, we can rewrite this expression as
{\scriptsize \begin{eqnarray*}
\hPXn{l}(t) - \EE \hPXn{l} (t) & =&
 \PQ(\dnl t) \left[ \frac{\hPZ^{(2)}(t)-\hpln \PU(t)}{(1-\hpln)\PU(t)} \right] - \PQ(\dnl t) \EE \left[ \frac{\hPZ^{(2)}(t)-\hpln \PU(t)}{(1-\hpln)\PU(t)} \right] \\
& =&  \frac{\PQ(\dnl t) }{\PU(t)}  \left[ \frac{\hPZ^{(2)}(t)-\hpln \PU(t)}{1-\hpln} \right]  - \frac{\PQ(\dnl t) }{\PU(t)}  \PZ(t) \EE \left[\frac{1}{1-\hpln}\right] +\PQ(\dnl t)  \EE \left[\frac{\hpln}{1-\hpln}\right] \\
& = & \frac{\PQ(\dnl t) }{\PU(t)} \left[ \frac{\hPZ^{(2)}(t)}{1- \hpln} - \PZ(t) \EE \left[\frac{1}{1-\hpln}\right]\right] - \PQ(\dnl t) 
\left[ \frac{\hpln}{1-\hpln} - \EE \left[\frac{\hpln}{1-\hpln}\right]\right]\\
&  = & T_{2,1}+T_{2,2}-T_{2,3},\end{eqnarray*}}
where 
$$
T_{2,1}:= \frac{\PQ(\dnl t)}{\PU(t)} \frac{\hPZ^{(2)}(t) - \PZ(t)}{1- \hpln}  \quad T_{2,2} :=  \frac{\PQ(\dnl t) \PZ(t)}{\PU(t)}
\left[ \frac{1}{1- \hpln}-  \EE \left[\frac{1}{1-\hpln}\right]\right],
$$
and 
$$
T_{2,3}:= \PQ(\dnl t) 
\left[ \frac{\hpln}{1-\hpln} - \EE \left[\frac{\hpln}{1-\hpln}\right]\right].
$$
The terms $T_{2,2}$ and $T_{2,3}$ can be upper bounded easily while $T_{2,1}$ deserves a specific attention.

\paragraph{Study of $T_{2,2}$:}
We first remark that  $\EE \left[ \frac{1}{1-\hpln}\right]$ is close to $\frac{1}{1-p}$ using the upper bound:
\begin{equation}\label{eq:maj_triv}
\left| \frac{1}{1-\hpln} - \frac{1}{1-p}\right| \leq \frac{|p-\hpln|}{1-p}(\log n)^{a},
\end{equation}
where the inequality derives from $\hpln\leq 1-\tau_n$ and \eqref{eq:taun}. Therefore, applying Theorem \ref{theo:estimp} with the smoothness parameter $s_l$ we deduce that
\begin{equation}\label{eq:esperance}
\left| \EE \left[\frac{1}{1-\hpln}\right]-\frac{1}{1-p} \right| \lesssim (\log n)^{a} n^{-(s_l+1/2)/(2s_l+2\nu)},
\end{equation}
Consider a constant $c>0$, the term $T_{2,2}$ is then handled as follows:
\begin{eqnarray*}
\PP \left( \|T_{2,2}\|_2 \geq c\rho_{n,l} \right) &\leq& \PP \left( \left| \frac{1}{1-\hpln} - \EE \left[ \frac{1}{1-\hpln}\right]\right| \geq  \frac{ c \rho_{n,l}}{\| \PQ(\dnl .) \PZ \PU^{-1}\|_2} \right) \\
& \leq &  \PP \left( \left| \frac{1}{1-\hpln} - \EE \left[ \frac{1}{1-\hpln}\right]\right| \geq  \frac{ c \rho_{n,l}}{\| \PQ(\dnl .) (p+(1-p)\PX )\|_2} \right) \\
& \leq & \PP \left( \left| \frac{1}{1-\hpln} - \EE \left[ \frac{1}{1-\hpln}\right]\right| \geq   \frac{c}{2\|\PQ\|_2} \rho_{n,l} \dnl^{1/2}\right),
\end{eqnarray*}
where we use the upper bound of the $L_2$ norm: $\| \PQ(\dnl .)  (p+(1-p)\PX ) \|_2^2 \leq 4 \int_{\RR} |\PQ(\dnl t)|^2 dt \leq 4\dnl^{-1} \|\PQ\|_2^2$.
We now use Equation \eqref{eq:esperance} and obtain:
{\scriptsize
\bea
\PP \left(\left| \frac{1}{1-\hpln} - \EE \left[ \frac{1}{1-\hpln}\right]\right| \right.&\left. \geq   \frac{c}{2\|\PQ\|_2} \rho_{n,l} \dnl^{1/2}\right) \\
&\leq
\PP \left(\left| \frac{1}{1-\hpln} - \frac{1}{1-p} \right| + \left| \frac{1}{1-p} -\EE \left[ \frac{1}{1-\hpln}\right]\right| \geq   \frac{c}{2\|\PQ\|_2} \rho_{n,l} \dnl^{1/2}\right)
\\
& \leq  \PP \left(\left| \frac{1}{1-\hpln} - \frac{1}{1-p} \right| \geq   \frac{c}{2\|\PQ\|_2} \rho_{n,l} \dnl^{1/2} - C (\log n)^{a} n^{-(s_l+1/2)/(2s_l+2\nu)} \right)\\
& \leq \PP \left(\left| \frac{1}{1-\hpln} - \frac{1}{1-p} \right| \geq   \frac{c}{4\|\PQ\|_2} \rho_{n,l} \dnl^{1/2} \right),
\eea
}
for $n$ large enough because the approximation term involved in the upper bound of Equation \eqref{eq:esperance} is negligible comparing to $\rho_{n,l} \dnl^{1/2}$.  Using again \eqref{eq:maj_triv},
we obtain that for a sufficiently small constant $\eta$ (independent on $n$ and $l$):
$$
\PP \left( \|T_{2,2}\|_2 \geq c\rho_{n,l} \right)  \leq \PP \left( |p-\hpln| \geq \eta \rho_{n,l} \dnl^{1/2}\right).
$$
In order to apply the Bernstein inequality (see Theorem \ref{theo:bernstein} in the appendix Section \ref{sec:appendix}) , let us remark first that the penalty $\kappa_{n,l}$ defined by \eqref{eq:penalty} satisfies 
$$\kappa_{n,l}=o\left(\rho_{n,l} \dnl^{1/2} \right).$$ 
Since for $n$ large enough we have $p\le 1-\tau_n$ , the truncated estimator satisfies
$$\|\hpln-p\|\le \| \hat{p}_{n/2,l}-p\|$$ where $\hpln$ is given by \eqref{eq:pnj1}. Thus we can conclude that
\ben\label{eq:T22}
\PP \left( \|T_{2,2}\|_2 \geq c\rho_{n,l} \right) \lesssim n^{-\beta^2/64}.
\een

\paragraph{Study of $T_{2,3}$:}
This study is of the same nature because  $\EE \left[ \frac{\hpln}{1-\hpln}\right]$ is close to $\frac{p}{1-p}$: Inequality \eqref{eq:esperance} reads:
\begin{equation*}
\left| \EE \left[\frac{\hpln}{1-\hpln}\right]-\frac{p}{1-p} \right| 
= \left| \EE \left[ \frac{p-\hpln}{(1-p)(1-\hpln)} \right]\right| 
 \lesssim (\log n)^{a} n^{-(s_l+1/2)/(2s_l+2\nu)},
\end{equation*}
Hence
\begin{eqnarray*}
\PP \left( \|T_{2,3}\|_2 \geq c\rho_{n,l} \right) &\leq& \PP \left( \left| \frac{\hpln}{1-\hpln} - \EE \left[ \frac{p}{1-\hpln}\right]\right| \geq  \frac{ c \rho_{n,l}}{2\| \PQ(\dnl .)\|_2} \right) \\
& \leq & \PP \left( \left| \frac{\hpln}{1-\hpln} - \EE \left[ \frac{p}{1-\hpln}\right]\right| \geq   \frac{c}{2\|\PQ\|_2} \rho_{n,l} \dnl^{1/2}\right),
\end{eqnarray*}
We  use again Equation \eqref{eq:esperance} and obtain that:
\bea
\PP \left(\left| \frac{\hpln}{1-\hpln} - \EE \left[ \frac{\hpln}{1-\hpln}\right]\right| \geq \right.&\left.   \frac{c}{2\|\PQ\|_2} \rho_{n,l} \dnl^{1/2}\right)\\
 &\leq  
\PP \left(\left| \frac{\hpln}{1-\hpln} - \frac{p}{1-p} \right| + \left| \frac{p}{1-p} -\EE \left[ \frac{\hpln}{1-\hpln}\right]\right| \geq   \frac{c}{2\|\PQ\|_2} \rho_{n,l} \dnl^{1/2}\right)
\\
& \leq  \PP \left(\left| \frac{\hpln}{1-\hpln} - \frac{p}{1-p} \right| \geq   \frac{c}{4 \|\PQ\|_2} \rho_{n,l} \dnl^{1/2} \right),
\eea
and similar arguments used for $T_{2,2}$ yields:
\ben\label{eq:T23}
\PP \left( \|T_{2,3}\|_2 \geq c\rho_{n,l} \right) \lesssim n^{-\beta^2/64}.
\een

\paragraph{Study of $T_{2,1}$:}
We recall that the support of $\PQ$ is $[-1;1]$, which implies that
$$
 \|T_{2,1} \|_{_2}^2 = \int_{-\dnl^{-1}}^{\dnl^{-1}} \frac{|\PQ(\dnl t)|^2}{|\PU(t)|^2} 
  \frac{\left| \hPZ^{(2)}(t) - \PZ(t)\right|^2}{(1- \hpln)^2}  dt.
$$
We now define $M_{n,l}$  as the supremum of the empirical process $ \hPZ^{(2)}(t) - \PZ(t)$ when $t$ varies between $-\dnl^{-1}$ and $\dnl^{-1}$:
$$
M_{n,l} := \sup_{|t| \leq \dnl^{-1}} \left| \hPZ^{(2)}(t) - \PZ(t)\right|,
$$
and write
$$ \|T_{2,1} \|_{_2} \leq M_{n,l} \times \frac{1}{(1-\hpln)} \times \left\| \frac{\PQ(\dnl t)}{\PU(t)}\right\|_{_2}.$$
We then deduce that
\begin{eqnarray*}
\PP \left( \|T_{2,1} \|_{_2} \geq \rho_{n,l}\right) &\le &\PP \left(  \left\| \frac{\PQ(\dnl t)}{\PU(t)}  \right\|_2  \frac{ M_{n,l}}{1- \hpln}\geq \rho_{n,l} \right) \\
& \leq  & \PP \left( \left\| \PQ(\dnl .) \PU(.)^{-1}\right\|_{_2} M_{n,l}  \geq \tau_n  \rho_{n,l} \right).\\
 \end{eqnarray*}
To upper bound the $\LL^2$ norm of $\PQ(\dnl .) \PU^{-1}$, we follow a standard argument of \cite{Fan} using that $g\in\Hnu$ and deduce that
 $$
 \left\| \PQ(\dnl .) \PU(.)^{-1}\right\|_{_2}^2 =\int_{\RR} \frac{|\PQ(\dnl t)|^2}{|\PU(t)|^2} dt \leq \frac{1}{\dnl} \int_{-1}^1 \frac{|\PQ(\xi)|^2}{|\PU(\xi\dnl^{-1})|^2} d\xi \leq \frac{ \|\PQ\|_{_2}^2}{d_2^{-2}} \dnl^{-(2 \nu+1)}.
 $$
This inequality yields
\begin{equation}\label{eq:t_21}
 \PP \left( \|T_{2,1} \|_{_2} \geq \rho_{n,l}\right) \leq \PP \left( M_{n,l}\geq \frac{d_2}{\|\PQ\|_{_2}} \tau_n  \rho_{n,l} \dnl^{1/2+\nu} \right).
 \end{equation}
 It remains to obtain a deviation inequality on $M_{n,l}$, which is the supremum norm of $\hPZ^{(2)}-\PZ$ on the interval $[-\dnl^{-1},\dnl^{-1}]$. For this purpose, we could try to use the roadmap of applying the Talagrand inequality associated to a chaining strategy (see \cite{BLM} for example) but these general theorems will not lead to a satisfactory deviation bound because of our assumption on $U$, which is not a sub-Gaussian random variable.
 
  Nevertheless, it is possible to exploit the feature of the process $(\hPZ^{(2)}(t) - \PZ(t))_{-\dnl^{-1} \leq t \leq \dnl}$, which is an average of  centered bounded random processes. We handle
 a discretization of the interval $[-\dnl^{-1},\dnl]$ and define $t \longmapsto W_n(t)$ by:
 $$
 \forall t \in \RR: \qquad 
 W_n(t) = \hPZ^{(2)}(t) - \PZ(t).
 $$ We introduce a  parameter $\alpha>0$ that will be chosen small enough below. The control of  $M_{n,l}$ relies on the simple remark that:
 $$
 W_n(t+u)-W_n(t) = \frac{1}{n-n'+1} \sum_{j=n'}^n \left[ e^{\ii t Z_j} - e^{\ii (t+u) Z_j}\right] - \mathbb{E} \left[ e^{\ii t Z}  - e^{\ii (t+u) Z } \right],
 $$ where we set $n'=[n/2]+1$.
 The elementary remark $|e^{\ii a } - e^{\ii b}| \leq |a-b|$ for all $(a,b) \in \RR^2$ yields
\ben
\label{eq:maj_ecart}
| W_n(t+u)-W_n(t)| \leq |u| \left( \frac{1}{n-n'+1} \sum_{j=n'}^n |Z_j| +  \mathbb{E}  |Z|\right).
\een
 In particular, if we define the event
 $$
 \Omega_n := \left\{   \frac{1}{n-n'+1} \sum_{j=n'}^n |Z_j| < 2 \mathbb{E} |Z| \right\}.
 $$
The Tchebychev inequality  applied to the $\LL^2$-random variables $(Z_j)_{n'\le j\le n}$ shows that:
 $$
\PP(\Omega_n^{c}) \leq \frac{\mathbb{V}ar(|Z|)}{n \mathbb{E}[|Z|]^2} \leq \frac{c}{n},
 $$
 with $c=\mathbb{V}ar(|Z|) \left\{ \mathbb{E}[|Z|]\right\}^{-2}$. Obviously, $\mathbb{E}[|Z|] > 0$ because $Z=U+AX$ and $Z$ cannot be a.s. $0$ because $U$ is assumed to satisfy $\Hnu$.
 The important point on $\Omega_n$ is that Inequality \eqref{eq:maj_ecart} does not depend on $t$, so that $  \forall s >0$:
 \bean
 \PP\Bigl(\sup_{ |t|\le \dnl^{-1} \, ,|u| \leq \alpha } |W_n(t+u)-W_n(t)| > s \Bigr)& \le  \PP\Bigl(\alpha \Bigl( \frac{1}{n-n'} \sum_{j=n'}^n |Z_j| +  \mathbb{E}  |Z|\Bigr) > s \cap \Omega_n \Bigr) 
+ \PP\left( \Omega_n^{c} \right) \\
 & \leq  \mathbf{1}_{3 \alpha \mathbb{E}[|Z|]  > s} + \frac{c}{n}
\label{eq:discretisation}
 \eean
 We now define the threshold  $s_{n,l}$ and the window size $\alpha_{n,l}$ by:
\ben\label{def:seuil}
s_{n,l} := \frac{1}{2} \frac{d_2}{\|\PQ\|_{_2}} \tau_n  \rho_{n,l} \dnl^{1/2+\nu}  \qquad \text{and} \qquad 
\alpha_{n,l} :=  \frac{s_{n,l}}{3 \mathbb{E}[|Z|]}.
\een
 We are naturally driven to handle a discrete  grid $\mathcal{T}_{n,l}$ regularly spaced from $-\dnl^{-1}$ to $\dnl^{-1}$ whose step-size is $\alpha_{n,l}$. We have
 $$
 |\mathcal{T}_{n,l}| \leq \frac{2 \dnl^{-1}}{\alpha_{n,l}}= \frac{6 \mathbb{E}[|Z|] \dnl^{-1}}{s_{n,l}} \qquad \text{and} \qquad \forall t \in [-\dnl^{-1},\dnl^{-1}], \quad \exists \tau_t \in \mathcal{T}_{n,l} \, : \qquad |t-\tau_t| \leq \alpha_{n,l}.
 $$
These settings permit to deduce from \eqref{eq:discretisation}  that:
 \begin{equation}\label{eq:c/n2}
 \PP \left( \sup_{|t|\le \dnl^{-1} \, , |u| \leq \alpha_{n,l}} |W_n(t+u)-W_n(t)| > s_{n,l} \right) \leq \frac{c}{n}.
 \end{equation}
Now, for each element of the grid $\tau \in \mathcal{T}_{n,l}$, $W_n(\tau)$ is a mean of $n$ random variables that whose modulus are bounded by $1$. The Hoeffding inequality implies that $\forall \tau \in \mathcal{T}_{n,l}$:
\begin{equation}
\label{eq:appli_hoeffding}
\begin{aligned}
 \PP \bigl( |W_n(\tau)|>s_{n,l} \bigr) 
&\le \PP\left( \Bigl| \frac{1}{n-n'}\sum_{j=n'}^n \cos(\tau Z_j) -\EE(\cos(\tau Z)) \Bigr| <\frac{s_{n,l}}{2}\right)\\
&\quad\quad+\PP\left(\Bigl| \frac{1}{n-n'}\sum_{j=n'}^n \sin(\tau Z_j) -\EE(\sin(\tau Z)) \Bigr|  <\frac{s_{n,l}}{2}\right)\\
&\leq 4 \exp\left( - \frac{(n-n') s_{n,l}^2}{8} \right).\end{aligned}
\end{equation}
We now produce an upper bound of $\PP \left( \|T_{2,1}\|_{_2} \geq \rho_{n,l} \right)$. Equation \eqref{eq:t_21} yields:
\begin{eqnarray*}
\PP \left( \|T_{2,1}\|_{_2} \geq \rho_{n,l} \right) 
&=& \PP \left( \sup_{|t|\le \dnl^{-1}} |W_n(t)| \geq 2 s_{n,l} \right) \\
& \leq & \PP \left( \sup_{ |t|\le \dnl^{-1} , |u| \leq h_{n,l}} |W_n(t+u)-W_n(t)| \geq s_{n,l} \quad \text{and} \sup_{|t|\le \dnl^{-1} } |W_n(t)| \geq 2 s_{n,l} \right) \\
& & + \PP \left( \sup_{|t|\le \dnl^{-1} , |u| \leq h_{n,l}} |W_n(t+u)-W_n(t)| \leq s_{n,l} \quad \text{and} \sup_{|t|\le \dnl^{-1} } |W_n(t)| \geq 2 s_{n,l} \right) \\ 
& \leq & \PP \left( \sup_{|t|\le \dnl^{-1} , |u| \leq h_{n,l}} |W_n(t+u)-W_n(t)| \geq s_{n,l} \right) + \PP \left( \sup_{\tau \in \mathcal{T}_{n,l}} |W_n(\tau)|  \geq s_{n,l} \right) \\
 & \leq &\frac{c}{n} + \sum_{\tau \in \mathcal{T}_{n,l}} \PP \left( |W_n(\tau)| \geq s_{n,l} \right) \\
 & \leq &  \frac{c}{n} + \frac{2 \dnl^{-1}}{\alpha_{n,l}} \times 4 \exp\left( - \frac{(n-n') s_{n,l}^2}{8} \right).
\end{eqnarray*}
where we successively applied the triangle inequality, a union bound over $\mathcal{T}_{n,l}$, and then inequalities \eqref{eq:c/n2} and \eqref{eq:appli_hoeffding}. An immediate computation from \eqref{def:seuil} \eqref{eq:taun},  \eqref{eq:penalty2} and \eqref{eq:def_deltanl} shows that:
\bea
n s_{n,l}^2 &= n \frac{d_2^2}{4 \|\PQ\|_{_2}^2}\tau_n^2 \rho_{n,l}^2 \dnl^{1+2 \nu} \\
&=n \frac{d_2^2}{4 \|\PQ\|_{_2}^2} \times (\log n)^{-2a} \times  \beta^2 (\log n)^{2a+1} n^{- 2 s_l/(2 s_l+2 \nu +1)} \times n^{-(1+2 \nu)/(2 s_l+2\nu+1)} \\
&= \frac{\beta^2 d_2^2}{4 \|\PQ\|_{_2}^2}\log(n),
\eea
and
\bea
 \dnl^{-1}\alpha_{n,l}^{-1}
&=\frac{6\E(|Z|)\|\PQ\|_2}{d_2}\beta \log(n)^{-1/2} n^{\frac{3/2+\nu+s_l}{2\nu+2s_l+1}}.
\eea
We therefore deduce that 
\ben\label{eq:T21}
\PP \left( \|T_{2,1}\|_{_2} \geq \rho_{n,l} \right)  \le  \frac{c}{n} +\beta \log(n)^{-1/2} \frac{48\E(|Z|)\|\PQ\|_2}{d_2}n^{-\frac{\beta^2 d_2^2}{24 \|\PQ\|_{_2}^2}+\frac{3/2+\nu+s_l}{2\nu+2s_l+1}}.
\een

\paragraph{Conclusion }

To conclude with a concentration estimate of $\hPXn{l}$ it remains to combine \eqref{eq:T22}, \eqref{eq:T23} and \eqref{eq:T21} to obtain the following result.
\begin{pro}
\label{prop:concentration_f}
Let $f \in \Hs$, $g$ a known density in $\Hnu$ with $\nu>1$, and $l \ge j^\star$, then
\bea
\label{eq:concentration_f}
\PP \left(   \|\hPXn{l} - \EE \hPXn{l}\|_{_{2}}^2  \ge \frac{\rho_{n,l}^2}{8} \right)
&\lesssim n^{-\frac{\beta^2}{64}} +n^{-1}+ \log(n)^{-1/2} n^{-\frac{\beta^2 d_2^2}{24 \|\PQ\|_{_2}^2}+\frac{3/2+\nu+s_l}{2\nu+2s_l+1}}.
\eea
\end{pro}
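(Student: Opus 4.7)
The plan is essentially to combine the three deviation estimates already established for $T_{2,1}$, $T_{2,2}$, and $T_{2,3}$ via a triangle inequality in $L^2$ together with a union bound. Specifically, recall the decomposition
\begin{equation*}
\hPXn{l}(t) - \EE\hPXn{l}(t) = T_{2,1}(t) + T_{2,2}(t) - T_{2,3}(t)
\end{equation*}
derived just before the statement, so that the triangle inequality for the $L^2$ norm yields
\begin{equation*}
\|\hPXn{l} - \EE\hPXn{l}\|_{_2} \leq \|T_{2,1}\|_{_2} + \|T_{2,2}\|_{_2} + \|T_{2,3}\|_{_2}.
\end{equation*}

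The event $\{ \|\hPXn{l} - \EE\hPXn{l}\|_{_2}^2 \ge \rho_{n,l}^2/8 \}$ is thus contained in the union $\bigcup_{j=1,2,3} \{\|T_{2,j}\|_{_2} \ge \rho_{n,l}/(6\sqrt{2})\}$. Picking the constants $c$ appearing in \eqref{eq:T22}, \eqref{eq:T23} and \eqref{eq:T21} equal to $1/(6\sqrt{2})$ (which only affects hidden multiplicative constants in the $\lesssim$ notation), the subadditivity of the probability measure then produces the sum of three terms: two contributions of order $n^{-\beta^2/64}$ coming from the Bernstein-type estimates on $T_{2,2}$ and $T_{2,3}$, a contribution of order $n^{-1}$ corresponding to the bad event $\Omega_n^{c}$ that was used in the chaining-Hoeffding study of $T_{2,1}$, and finally the principal Hoeffding contribution $\log(n)^{-1/2} n^{-\beta^2 d_2^2/(24\|\PQ\|_{_2}^2) + (3/2+\nu+s_l)/(2\nu+2s_l+1)}$ from the discretized supremum of the empirical characteristic process. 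Gathering these three terms produces exactly the right-hand side announced in \eqref{eq:concentration_f}.

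The combination step is in itself routine; the real work was done upstream in the separate handling of the three pieces, where each needed a different tool (Bernstein for the plug-in terms in $T_{2,2}$ and $T_{2,3}$, discretization plus a pathwise Lipschitz control of $t \mapsto W_n(t)$ on the good event $\Omega_n$ plus Hoeffding on the grid for $T_{2,1}$). The main point to verify when stitching them together is that the thresholds $s_{n,l}$, $\alpha_{n,l}$ and the penalty $\rho_{n,l}$ are calibrated consistently so that the dominant exponential rate in the $T_{2,1}$ bound reads $n^{-\beta^2 d_2^2/(24\|\PQ\|_{_2}^2)}$ multiplied by the polynomial cardinality $|\mathcal{T}_{n,l}|$ of the discretization; once this ratio is written out explicitly, the conclusion of the proposition is immediate from the union bound.
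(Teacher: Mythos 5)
Your proposal is correct and matches the paper's own (essentially one-line) treatment: the paper simply states, just before the proposition, that one combines the bounds \eqref{eq:T22}, \eqref{eq:T23}, and \eqref{eq:T21} for the three terms $T_{2,1}$, $T_{2,2}$, $T_{2,3}$, and your triangle-inequality-plus-union-bound argument is exactly that combination, spelled out. One small imprecision: rescaling the thresholds from $\rho_{n,l}$ (or $c\rho_{n,l}$) down to $\rho_{n,l}/(6\sqrt2)$ does not merely change hidden multiplicative constants in front of the bound, it rescales the exponents (e.g.\ $\beta^2/64$ becomes $\beta^2 c'^2/64$) because the thresholds enter the Bernstein and Hoeffding bounds quadratically; this is harmless here since $\beta$ is chosen large afterwards, and the paper itself is equally loose about which fixed constant $c$ is used in each of the three sub-estimates, but the stated numerical exponents in \eqref{eq:concentration_f} should really be read as holding up to such a constant rescaling.
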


\subsubsection{Proof of Theorem \ref{theo:lepski_f}}

Let us recall that Proposition \ref{prop:lepski2} and \ref{prop:inclusion} yield:
\bea
\E \|\hfjn - f\|_2
& \lesssim (\log n)^{a+1/2} n^{-s/(2s+2\nu+1)} + \sqrt{ \sum_{m  \ge j^{\star}} \Pro\Bigl(\|\hat{f}_{n,m} - f\|_{_2}   \ge \frac{\rho_{n,m}}{2} \Bigr)}\\
 &\lesssim (\log n)^{a+1/2}n^{-s/(2s+2\nu+1)} + \sqrt{ \sum_{m  \ge j^{\star}} \Pro\Bigl(\|\hPXn{l} - \EE \hPXn{l}\|_{_{2}}^2  \ge \frac{\rho_{n,l}^2}{8}\Bigr) }
\eea
From the definition of the grid $\mathcal{S}_n$  (see the beginning of Section \ref{subsec:lepskif}), it contains a number of order $s_0\varepsilon^{-1} \log(n)$ points and for all $l$, $n^{-\frac{3/2+s_l+\nu}{2s_l+2\nu+1}}\le n^{-1/2}$. Then, we conclude from Proposition \ref{prop:concentration_f} that:
\bea
\E \|\hfjn - f\|_2
 &\lesssim \beta (\log n)^{a+1/2} n^{-s/(2s+2\nu+1)} + \sqrt{s_0 \varepsilon^{-1} \log n\left(n^{-\frac{\beta^2}{64}} +n^{-1}+(\log n)^{-1/2} n^{-\frac{\beta^2 d_2^2}{24 \|\PQ\|_{_2}^2}+\frac{3/2+\nu+s_l}{2\nu+2s_l+1}}\right) }
\eea
Therefore, for $\beta$ large enough:
\bea
\E \|\hfjn - f\|_2
 &\lesssim (\log n)^{a+1/2} n^{-s/(2s+2\nu+1)} ,
\eea
which ends the proof of the almost optimal (up to some log terms) adaptivity of our Lepski's rule. \hfill $\square$

\section{Numerical experiments}
\label{sec:numerics}
\subsection{Simulated data}
Numerical simulation where implemented in Python. For the simulated random variables $U,A,X$ such that $Z=U+AX$, we have considered both Gamma random variables and we recall below their property. 
A  Gamma distribution function with parameters $k$ and $\theta$ (denoted $Gamma(k,\theta)$) admits a density $f(x)$ ($x\in\R$) and characteristic function $\phi(t)$ ($t\in \R$) given by:
$$f(x)=\frac{x^{k-1}e^{-x/\theta}}{\Gamma(k)\theta^k},\quad \Phi(t)=(1-\theta i t)^{-k}.$$
We considered two settings :
\paragraph*{First data set}
$$U\sim Gamma(2,1),\quad A\sim \mathcal{B}er(0.4),\quad X\sim Gamma(\gamma,1),$$
for $\gamma\in\{4,6,8\}$ (see Figure \ref{fig:DensityData1}).
Therefore the smoothness parameter $s$ satisfies $s=\gamma$, this parameter varies while $\nu$ is kept fixed $\nu=2$.

\begin{figure}[h!]\centering
\includegraphics[scale=0.35]{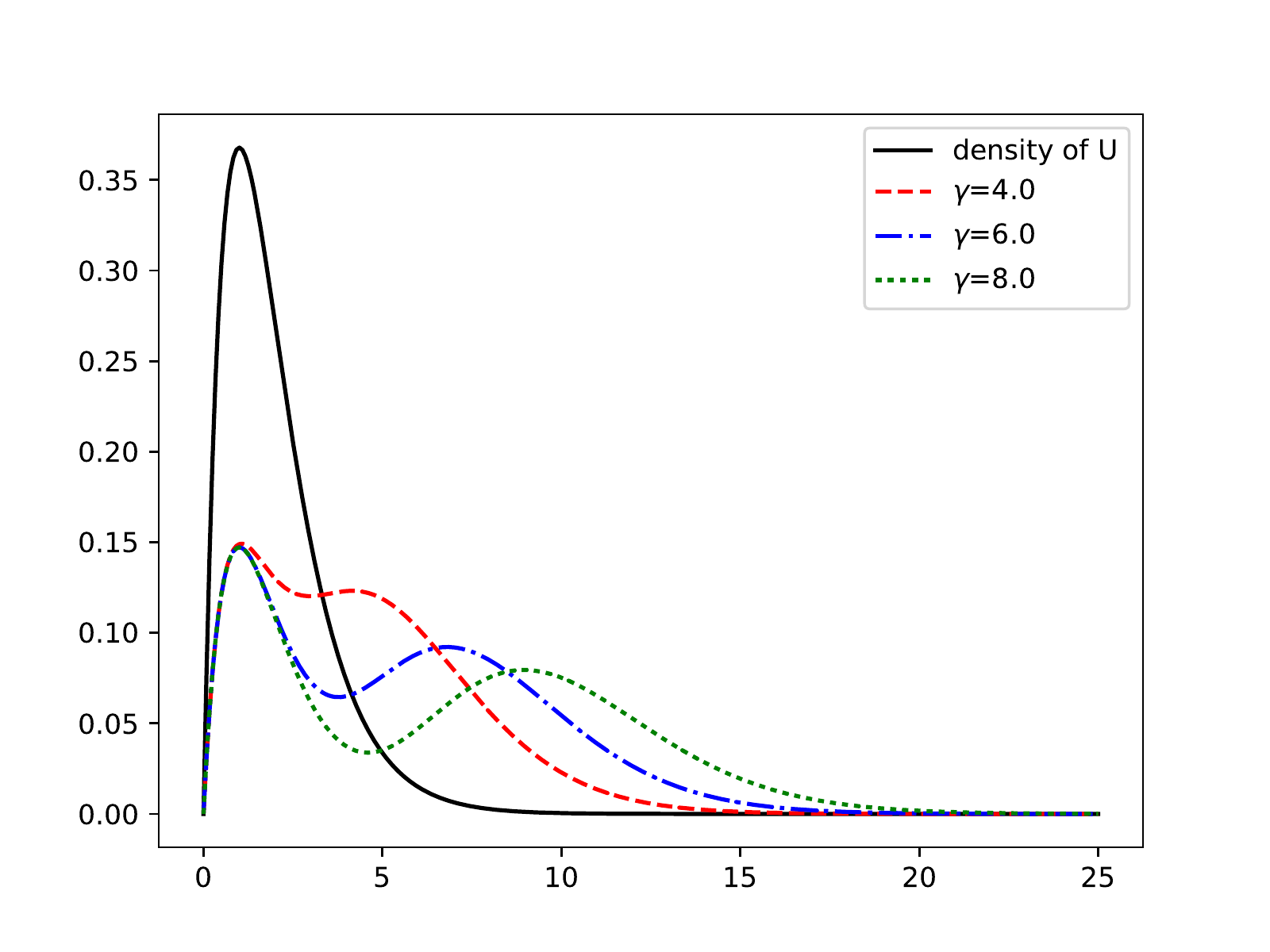}
\caption{Density of the baseline perturbation $U$ (in plain black) and of the observed signal $Z$ for different values of $\gamma$ (dotted curves).}
\label{fig:DensityData1}
\end{figure}
\paragraph*{Second data set}
$$U\sim Gamma(\gamma,1),\quad A\sim \mathcal{B}er(0.4),\quad X\sim Gamma(2,1),$$
for $\gamma\in\{2,4,6,8\}$
Therefore the smoothness parameter $\nu$ satisfies $\nu=\gamma$, this parameter varies while $s$ is kept fixed  $s=2$.

\paragraph{Results - Estimation of $p$} We first focus on the estimation of $p$. For each data sets we performed Monte Carlo replications (with 100 repetitions) in order to approach $\E \Bigl(\|\hat{p}_{n,\hjp} - p \|_2^2\Bigr) $  for different values of the sample size $n\in\{100,200,500,700,1000,2000\}$. Let us remark that the Lepskii procedure is costly in terms of numerical simulations since the criterion for selection requires to compute all the estimators on a grid. An alternative procedure has thus been proposed by \cite{Katkovnik} based on the construction of confidence intervals. However, in the setting of the estimation of a density, this construction is no longer possible. Therefore, our simulations are based on the exact Lepskii decision rule given in \eqref{def:j_chap_p} for $p$ and \eqref{eq:defln} for $f$. Figure \ref{fig:estim_p} gives in a $\log-\log$ scale the estimators of $\E \Bigl(\|\hat{p}_{n,\hjp} - p \|_2^2\Bigr) $ obtained by the Monte Carlo replications as a function of the numbers of observations $n$. The left and right sides of Figure \ref{fig:estim_p} correspond respectively to the first and second data sets.

\begin{figure}[h!]
\centering
\includegraphics[width=0.35\linewidth]{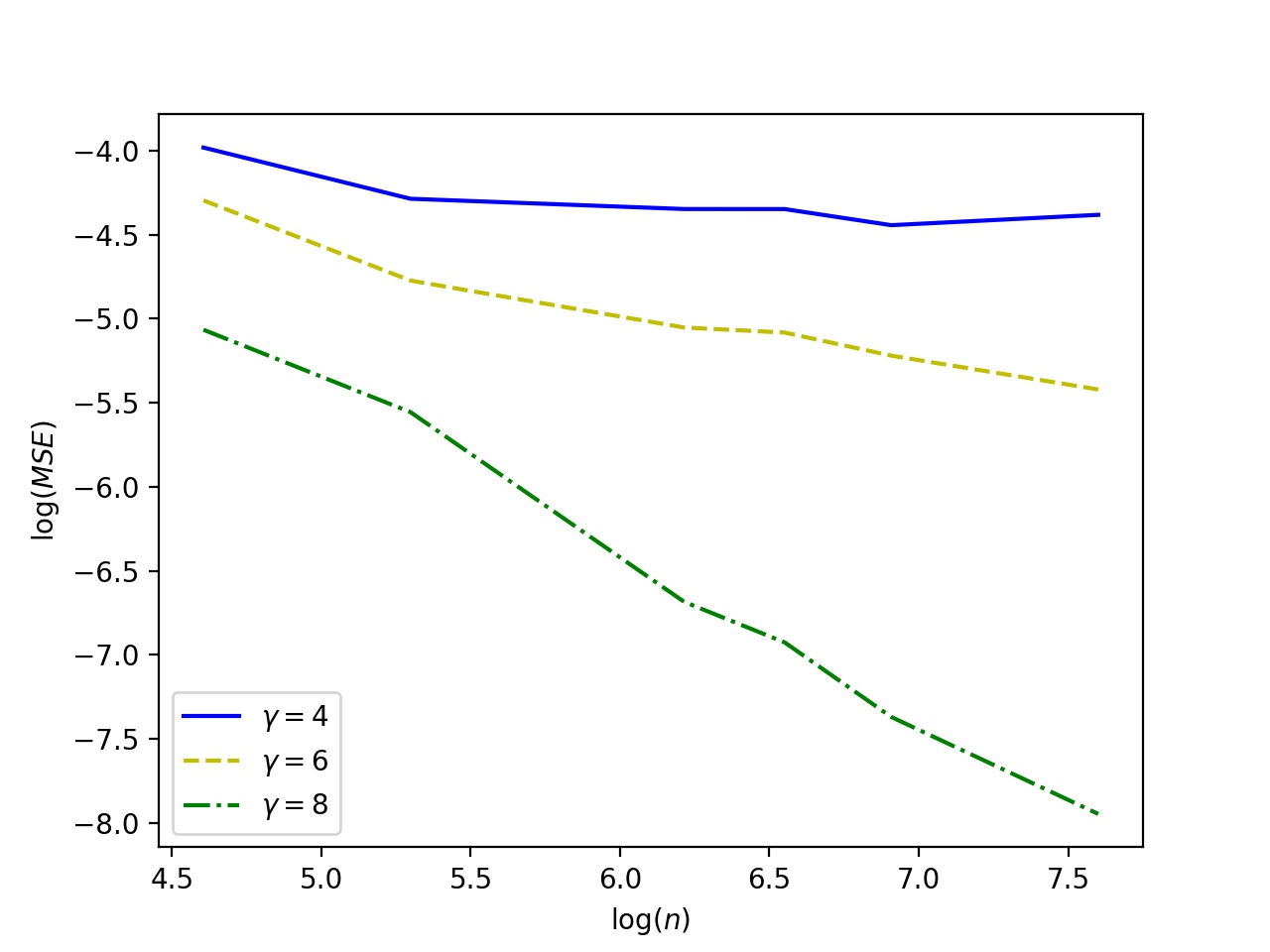}
\includegraphics[width=0.35\linewidth]{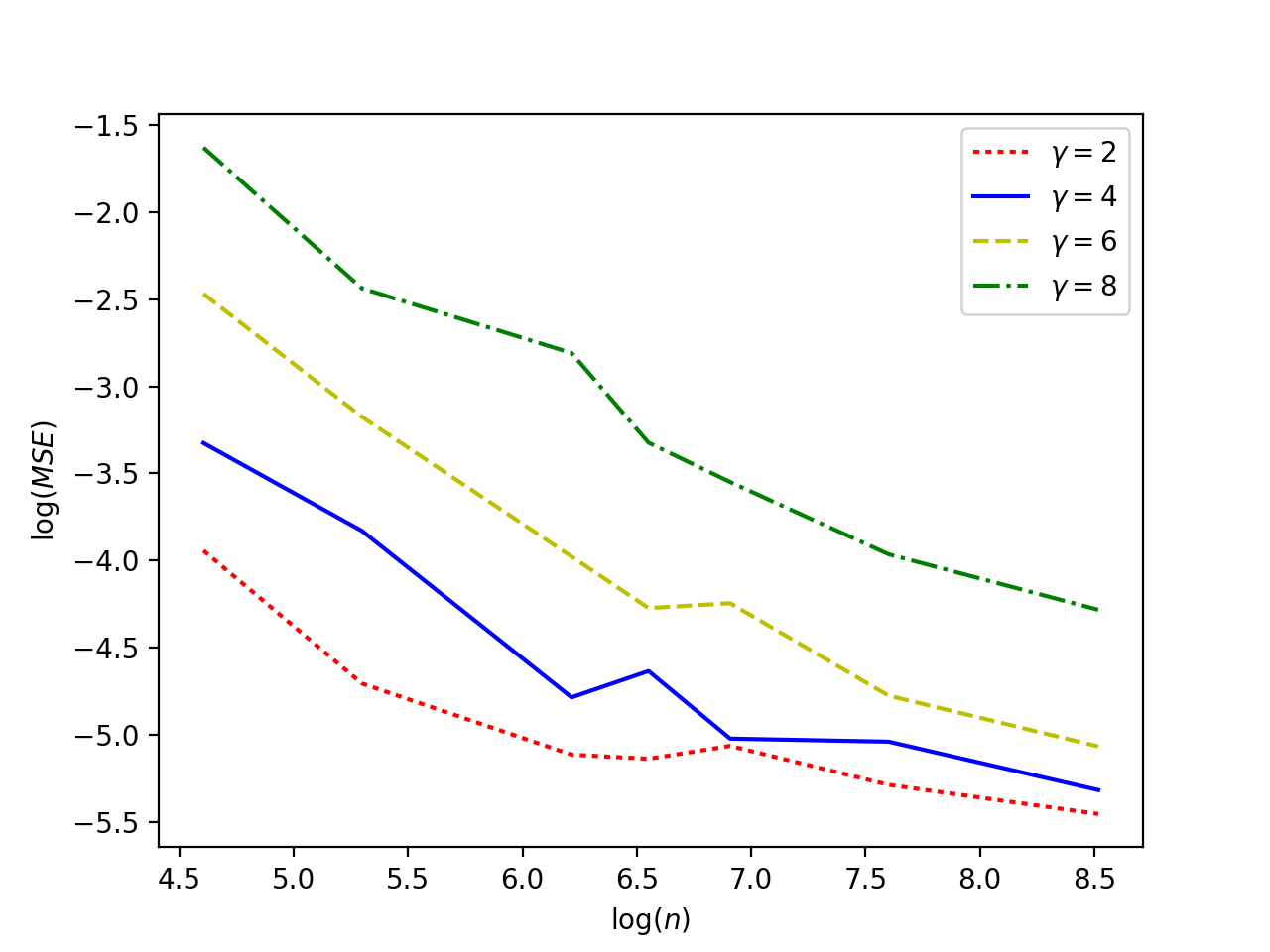}
\caption{Log-Log plot of $n \longmapsto \E \Bigl(\|\hat{p}_{n,\hjp} - p \|_2^2\Bigr) $ obtained by Monte Carlo replications. Left: first data set for several values of $s=\gamma$. Right: second data set for several values of $\nu=\gamma$. For these simulations, we choose $\varepsilon=0.5$ and $\beta=9.0$.}
\label{fig:estim_p}
\end{figure}
In both cases, logarithm of the mean squared error decreases linearly with $log(n)$, which is consistent with Theorem \ref{theo:lepski_p} and \ref{theo:estimp}.
We observe in Figure \ref{fig:estim_p} 
that the estimation of $p$ is easier for large values of $s$ (large $\gamma$) on the left graph, while the estimation gets more difficult as $\nu$ increases (large values of $\gamma$) on the right graph. This phenomenon is completely consistent with the rates derived in Theorem \ref{theo:lepski_p}.

\paragraph{Results - Estimation of $f$ and $\E\Bigl( \|\hat{f}_{n,\hjf} - f \|_2 \Bigr)$} For the non parametric estimation of $f$, we compared the estimation obtained with the Lepskii procedure proposed in this article with a similar one assuming that the mixing parameter $p$ is known. 
In Figure \ref{fig:phi_X_n}, we draw the expected Fourier transform $\phi_X$ and estimators obtained with the Lepskii procedure for different sample sizes $n\in\{100,1000,10000\}$. The left side corresponds to the complete Lepskii procedure \eqref{eq:defln} while in the right side the parameter $p$ is given. In this case, the simulated data correspond to the first data set with $\gamma=4$. On these graphs, we can first see that when the sample size increases, the bandwidth selected in the algorithm is improved since the support of $\hat{\Phi}_{X_{n}}$ widen.

\begin{figure}[h!]
\centering
\includegraphics[width=0.35\linewidth]{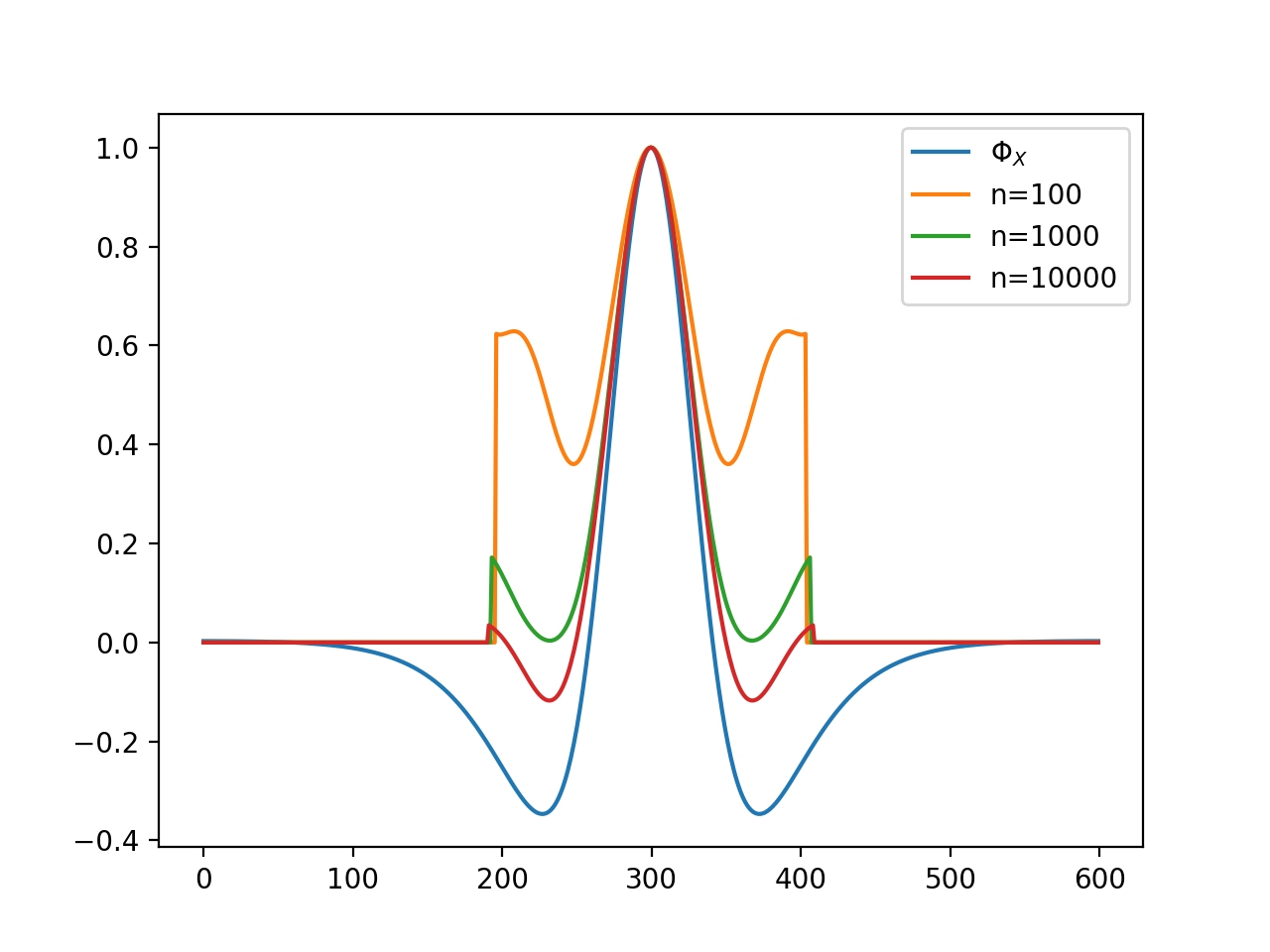}
\includegraphics[width=0.35\linewidth]{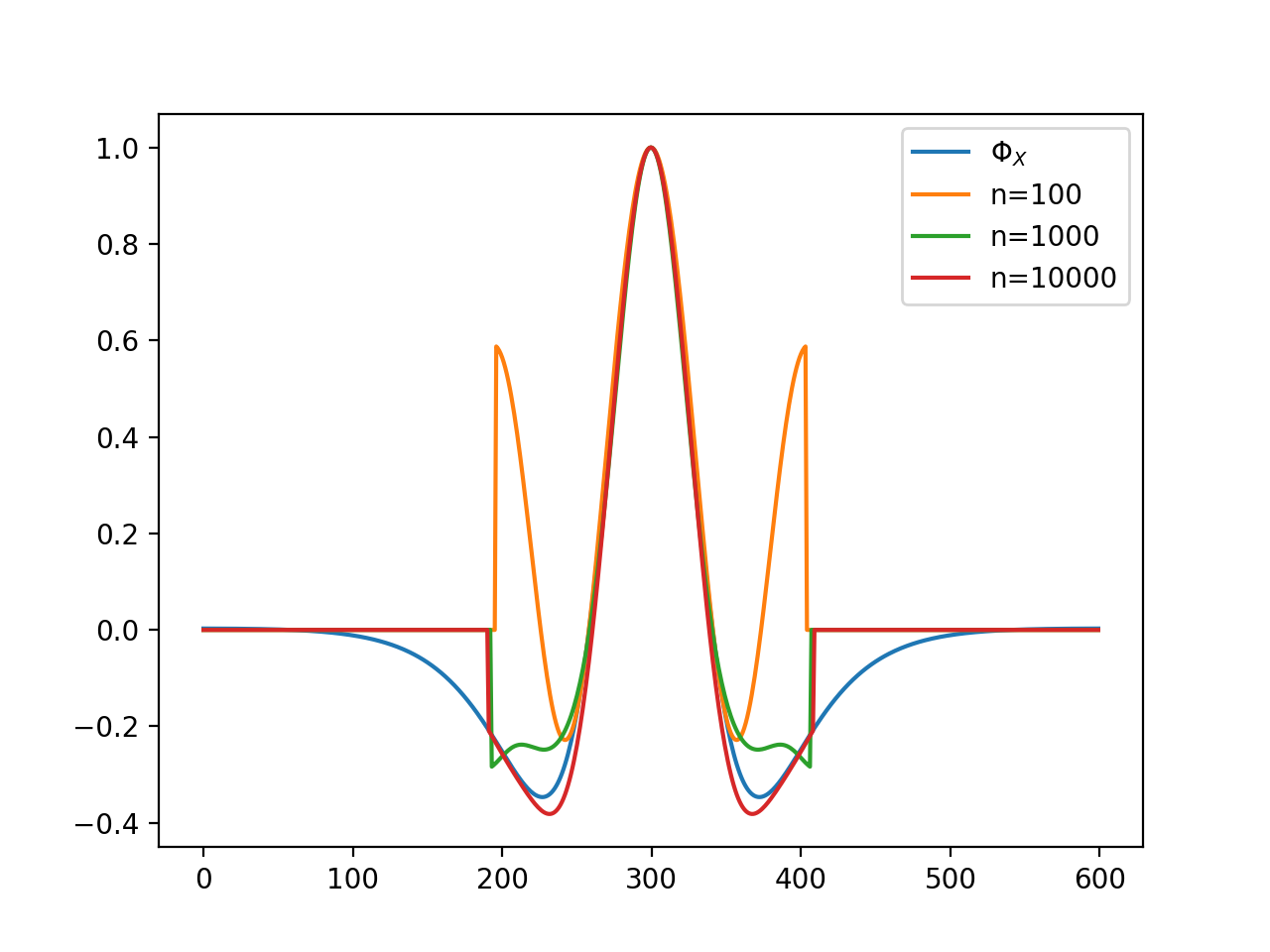}
\caption{True characteristic function $\Phi_X$ as well as its estimations counterpart for different sample sizes $n\in\{100,1000,10000 \}$. Left: Lepskii procedure described in Section \ref{subsec:lepskif}. Right: the true value of the mixing parameter $p$ is assumed to be known. We used datas according to the first data set with $\gamma=4$ and we set $\varepsilon=0.5$ and $\beta=9$.}
\label{fig:phi_X_n}
\end{figure}

We shall observe first that the algorithm leads to better results when the mixing parameter is known, which is not surprising since the estimation of $p$ provides an additional noise in our statistical estimator.
As for the estimation of $p$, we computed with Monte Carlo replications the mean squared estimation error for different sample sizes (see Figure \ref{fig:MSE_f}). Once again we compared our algorithm (left side) with the case where $p$ is known (right side). We find that the estimators are more accurate when $p$ is known and that the regularity $s=\gamma$ of the function $f$ improves the convergence speed.

\begin{figure}[!h]
\centering
\includegraphics[width=0.35\linewidth]{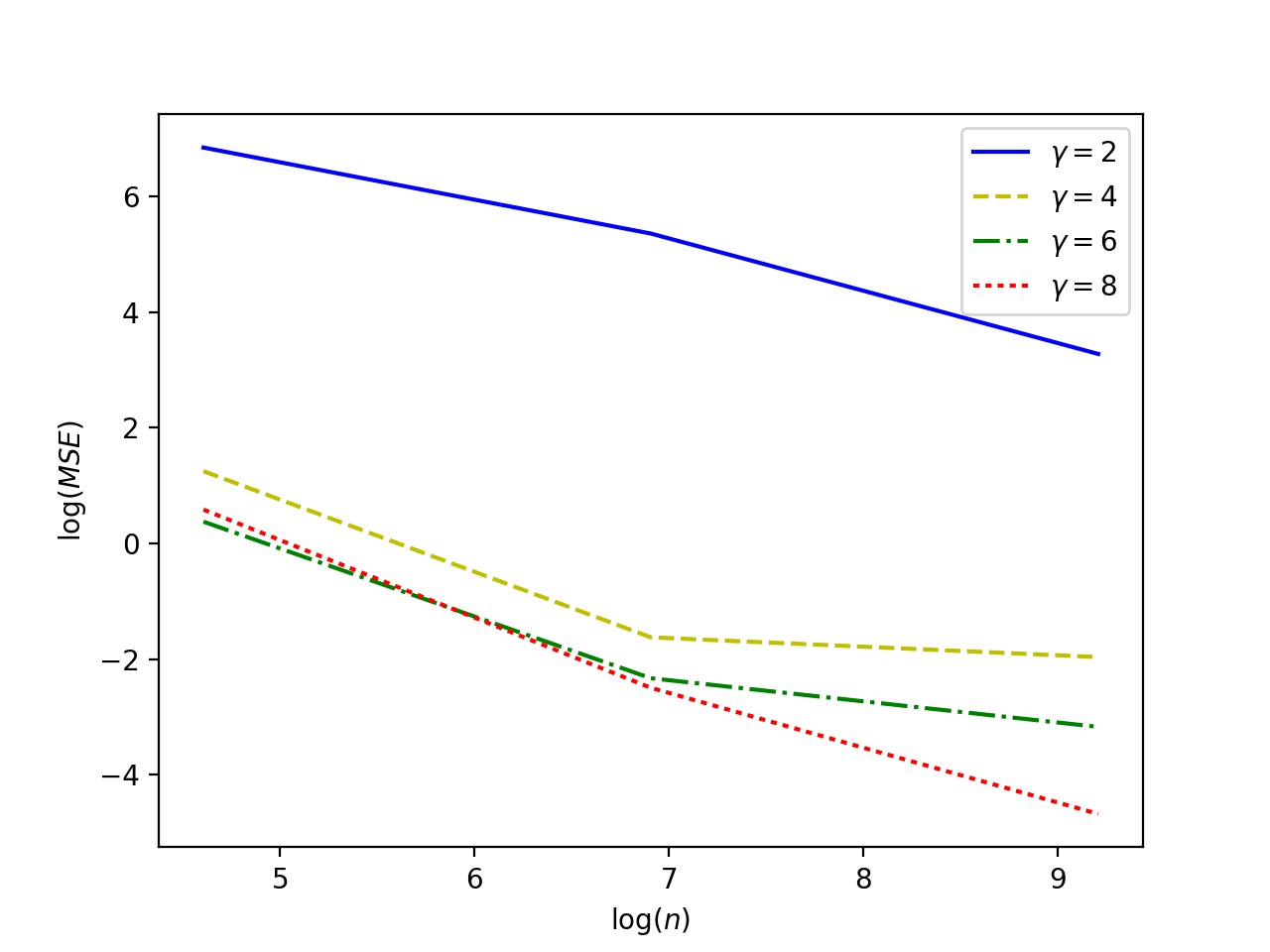}
\includegraphics[width=0.35\linewidth]{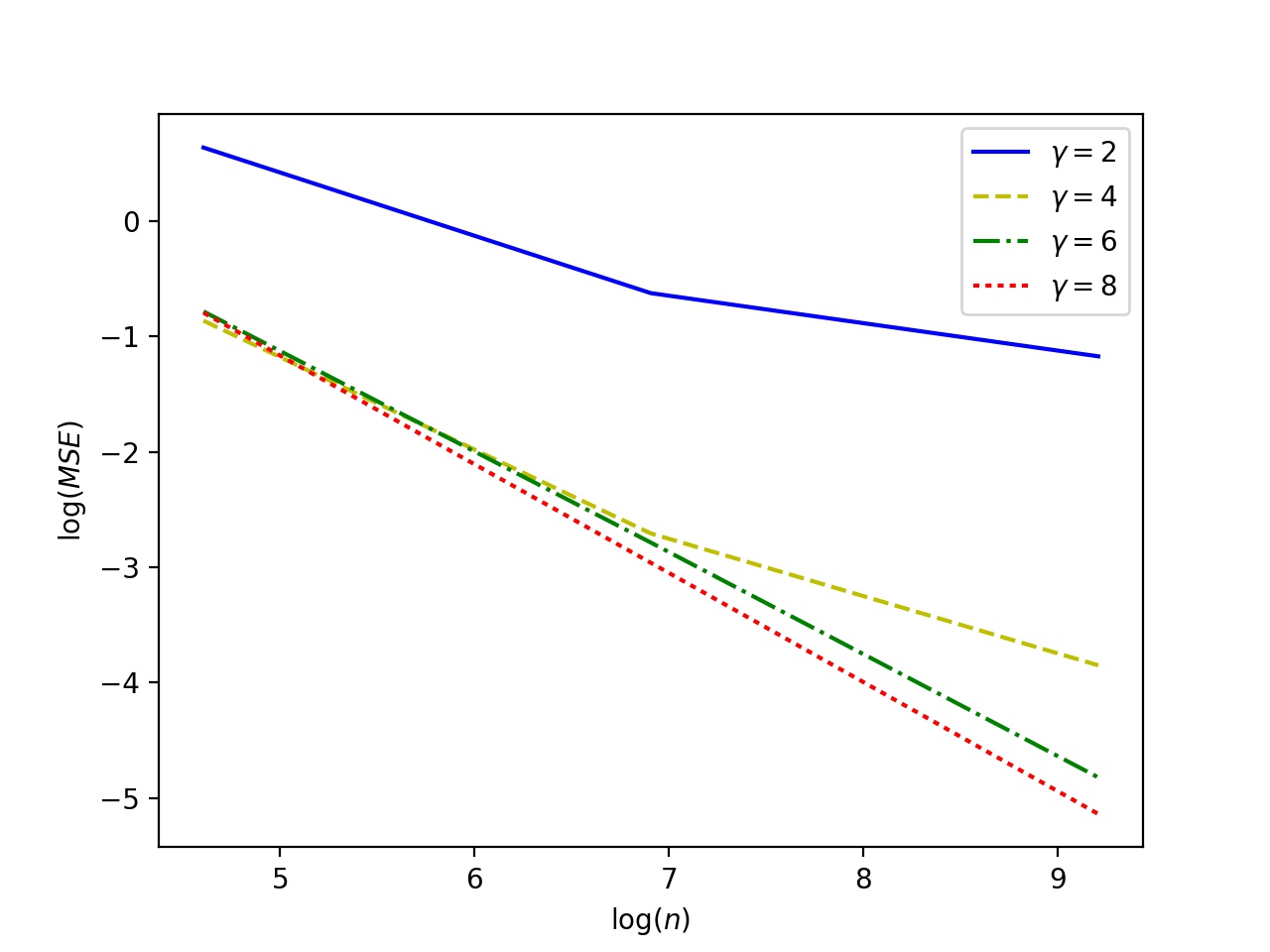}
\caption{On these graphs, we draw in a $\log-\log$ scale the estimators of $\E \|\hat{f}_{n,\hjf} - f \|_2$ obtained by Monte Carlo simulations (for $n\in\{100,1000,10000 \}$). The left panel corresponds to the case where $p$ is unknown while on the right panel, the value of $p$ was given. For these simulations, we used the first data set with different values of $\gamma$ and chose $\beta=9.0$ and $\varepsilon=0.5$.}
\label{fig:MSE_f}
\end{figure}

\subsection{Real data}
\label{subsec:realdata}

\paragraph{Description of the dataset}
The dataset used in this study is coming from fluorescence distribution measured using a flow cytometer instrument (BD Biosciences, LSRII FORTESSA) on cells obtained from human blood.
Lymphocytes are  cells of the immune system useful for cancer cell destruction in immunotherapy (among other), they were extracted from the fresh blood of an healthy individual by the biologist and this cell suspension was then used for the experiment. Technically, the cell suspension was split into two parts. One half was left untouched and the baseline photo emission of untreated cells was recorded by the cytometer. The second half was mixed with a fluorescently labelled antibody (reagent) that specifically binds to the CD27 protein. Then, the photo emission by treated cells was again recorded by the cytometer.
 The amount of reagent binding is reflected by the fluorescence emitted by the cell that is coming only from the antibody treatment (see Figure \ref{fig:cyto_exemple}).
The biological experiment's goal was to assess the expression of the molecule CD27 by human lymphocytes. This protein is expressed at the surface of lymphocytes and reflect their activation status. It is therefore used to estimate the functionality of a lymphocyte population. Methods that would help to decipher the mixture of functionalities among a lymphocyte population is thus of major biological interest. 

In this setting, we use the estimation procedure developed in our paper to infer the percentage of lymphocytes expressing CD27 on their surface and the conditional probability for a cell to express CD27 conditionally to its fluorescence intensity.
\paragraph{Preliminary estimation of the distribution of the noise $U$}
From the recorded fluorescence intensity of untreated cells, we infer the density $g$ of $U$ with a preliminary kernel density estimation using a Gaussian kernel through the \textit{scipy.stats.gaussian$\_$kde} Python software. We then use the recorded fluorescence intensity of treated cells as data for the analysis (see Figure \ref{fig:test3}(a)). 
In this case, the size of the recorded data is $n=12645$.

Of course, our work only deals with the situation where the density $g$ is known, which is unfortunately not possible in our biological situation. Therefore, we admit as a reasonable approximation the estimation of $g$ provided by a preliminary kernel density estimation. We have not treated in our theoretical study the consequences of such a preliminary non-parametric estimation, and we leave this subject open as a future subject of investigation. Such a work would then fall into the field of statistical inverse problems with noise in the operator (see \textit{e.g} \cite{cavalier3} and the references therein).

\paragraph{Estimation of $p$}
For the estimation of the proportion of cells expressing CD27 on their cells' surface, which corresponds to $1-p$, we use the Lepskii procedure proposed in Section \ref{subsec:lepskip}. To verify the convergence of our algorithm, we use a subsampling strategy of our data set and repeat it on permutated versions of the data. 

We observe on Figure \ref{fig:test3}(b) the good behaviour of our algorithm: it produces a sharp estimation even using the half of the data and  it converges to the value $\hat{p}_{n,\hat{j}_n^p}=0.31$. We aim at comparing this value with the measurement classically used in cytometric analysis. Usually biologist use a quantity called \textit{percentage of positive cells} with reflects the percentage of marked cells whose fluorescent intensity is higher that a given threshold. This threshold is computed as the 95th percentile of the density $g$ (see Figure \ref{fig:test3}(a)). On this data, the percentage of positive cells is $0.85$, which is much larger than the quantity we estimate $1-\hat{p}_{n,\hat{j}_n^p}=0.69$.

\begin{figure}[h!]
\hfill
\begin{minipage}{0.3\linewidth}\centering
\includegraphics[scale=0.32]{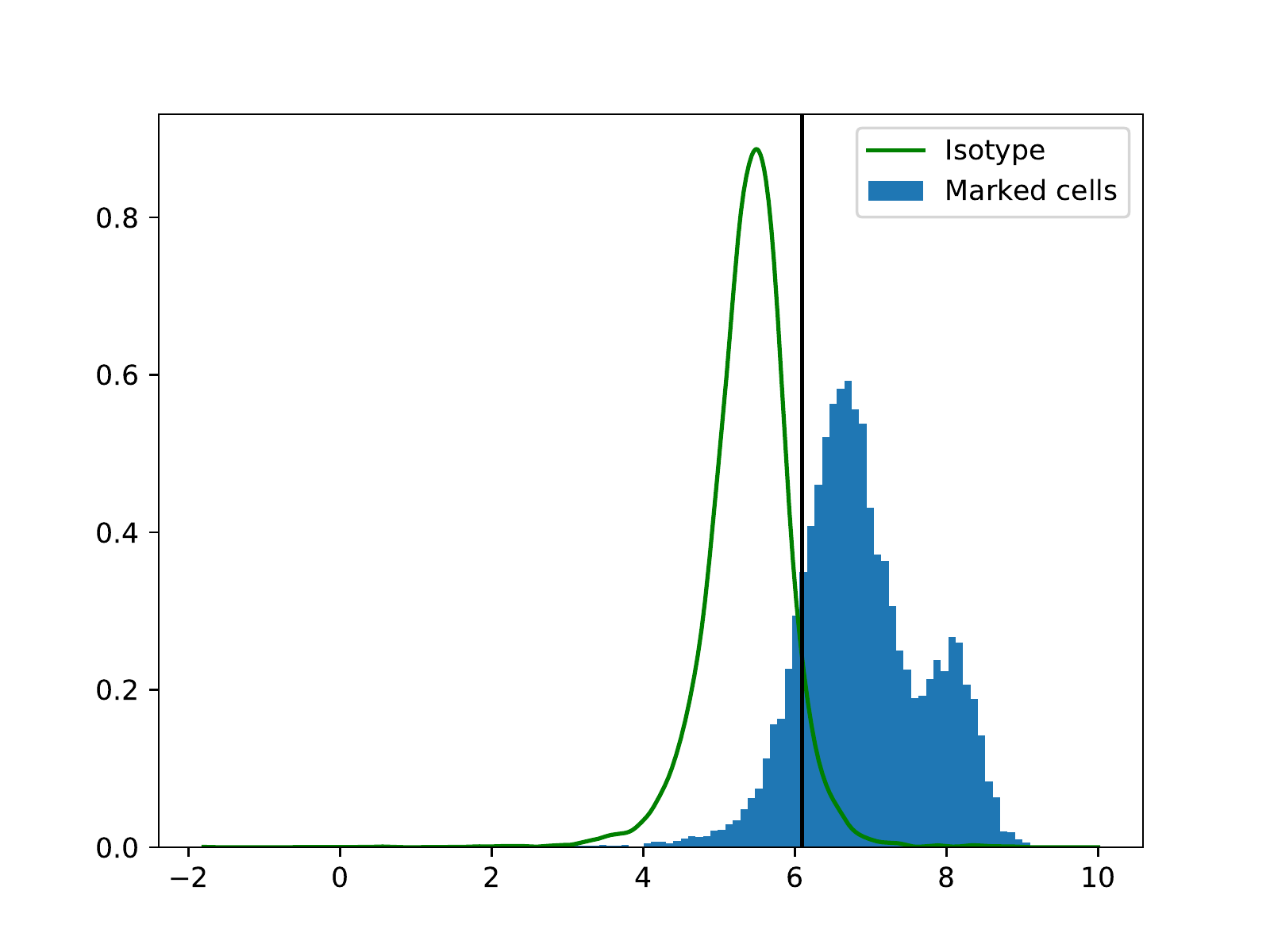}\\
(a)
\end{minipage}
\hfill
\begin{minipage}{0.3\linewidth}\centering
\includegraphics[scale=0.32]{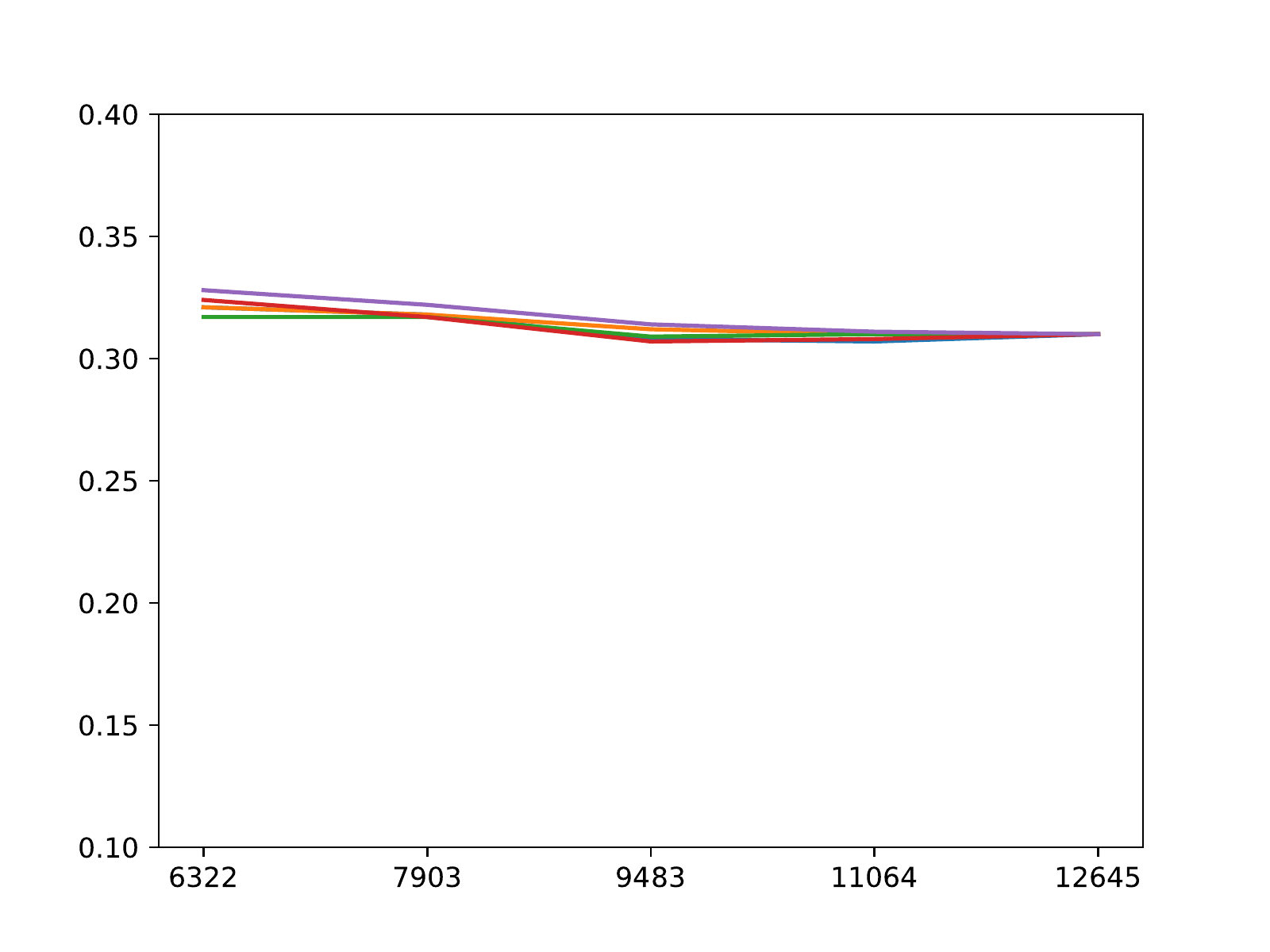}\\
(b)
\end{minipage}
\hfill
\begin{minipage}{0.3\linewidth}\centering
\includegraphics[scale=0.32]{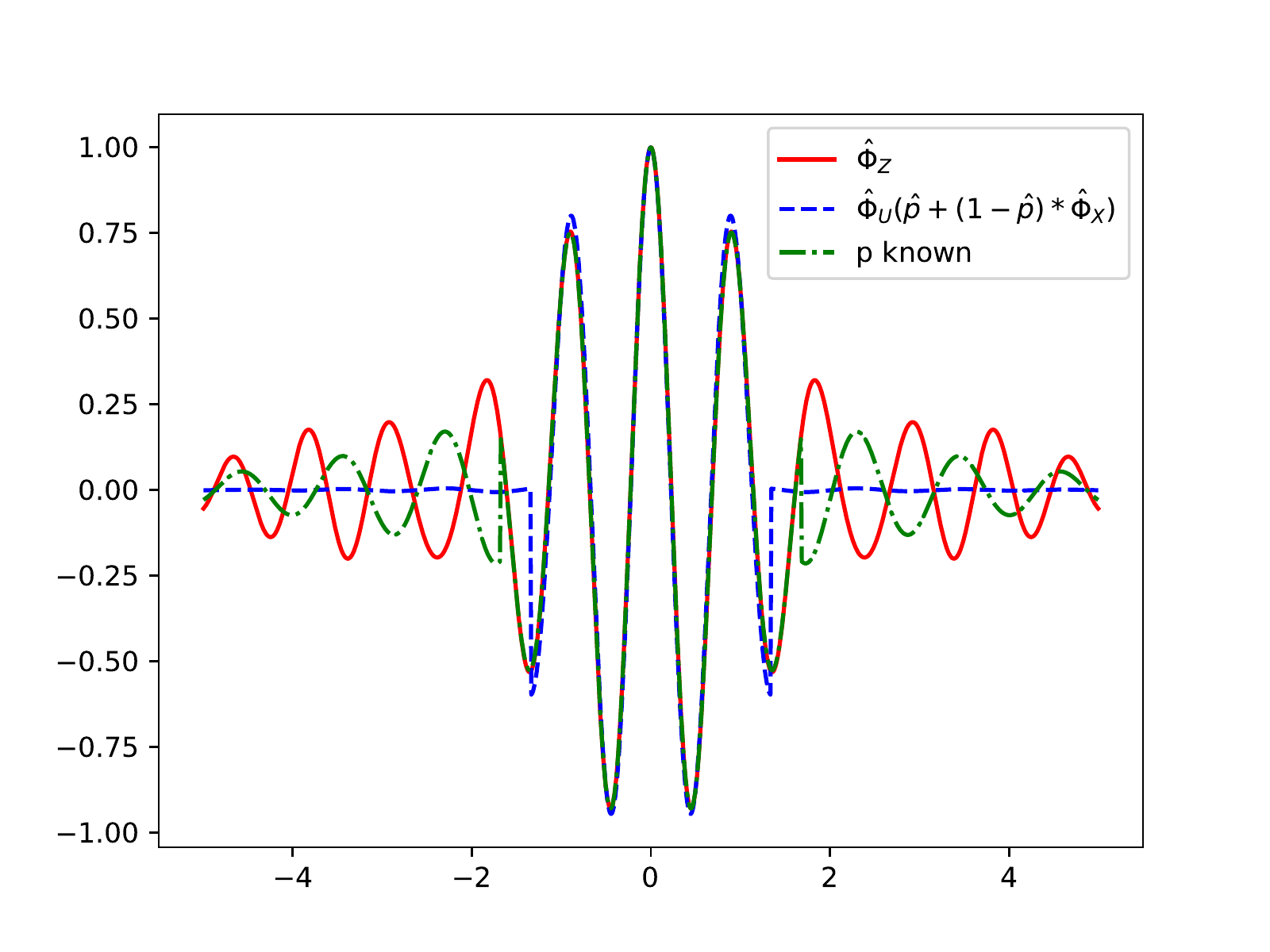}\\
(c)
\end{minipage}
\hfill
\\
\hfill
\begin{minipage}{0.3\linewidth}\centering
\includegraphics[scale=0.32]{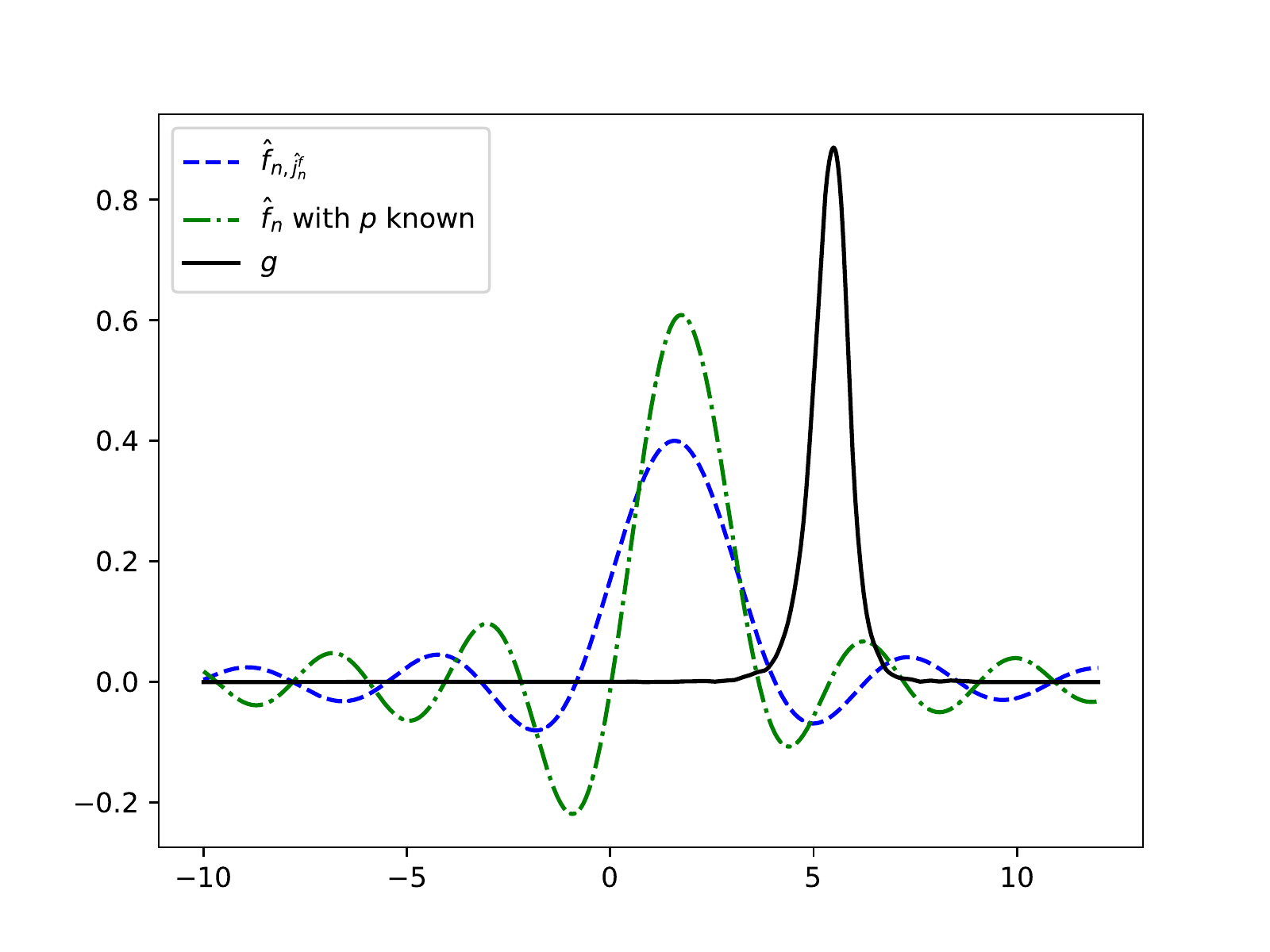}\\
(d)
\end{minipage}
\hfill
\begin{minipage}{0.3\linewidth}\centering
\includegraphics[scale=0.32]{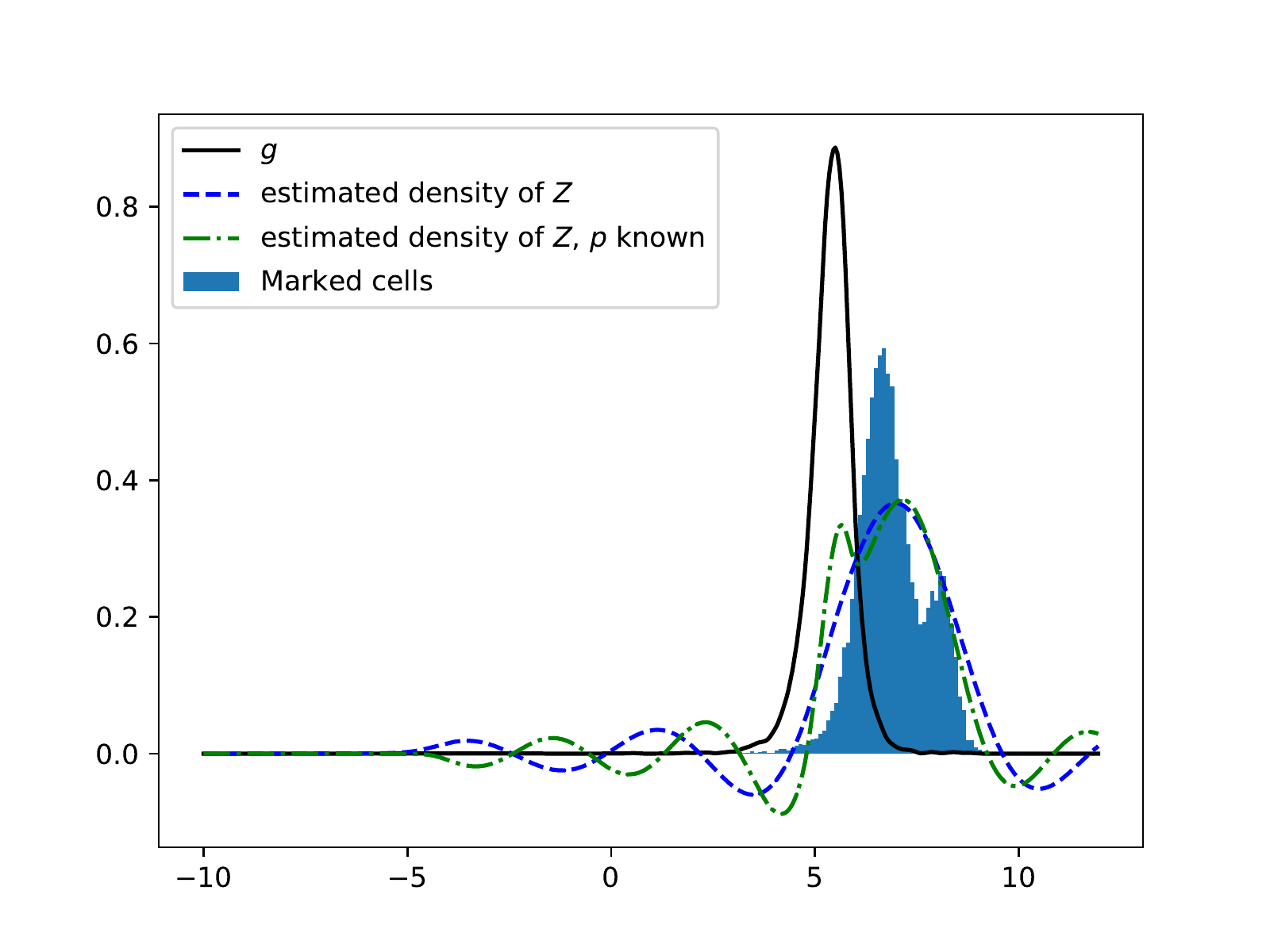}\\
(e)
\end{minipage}
\hfill
\begin{minipage}{0.3\linewidth}\centering
\includegraphics[scale=0.32]{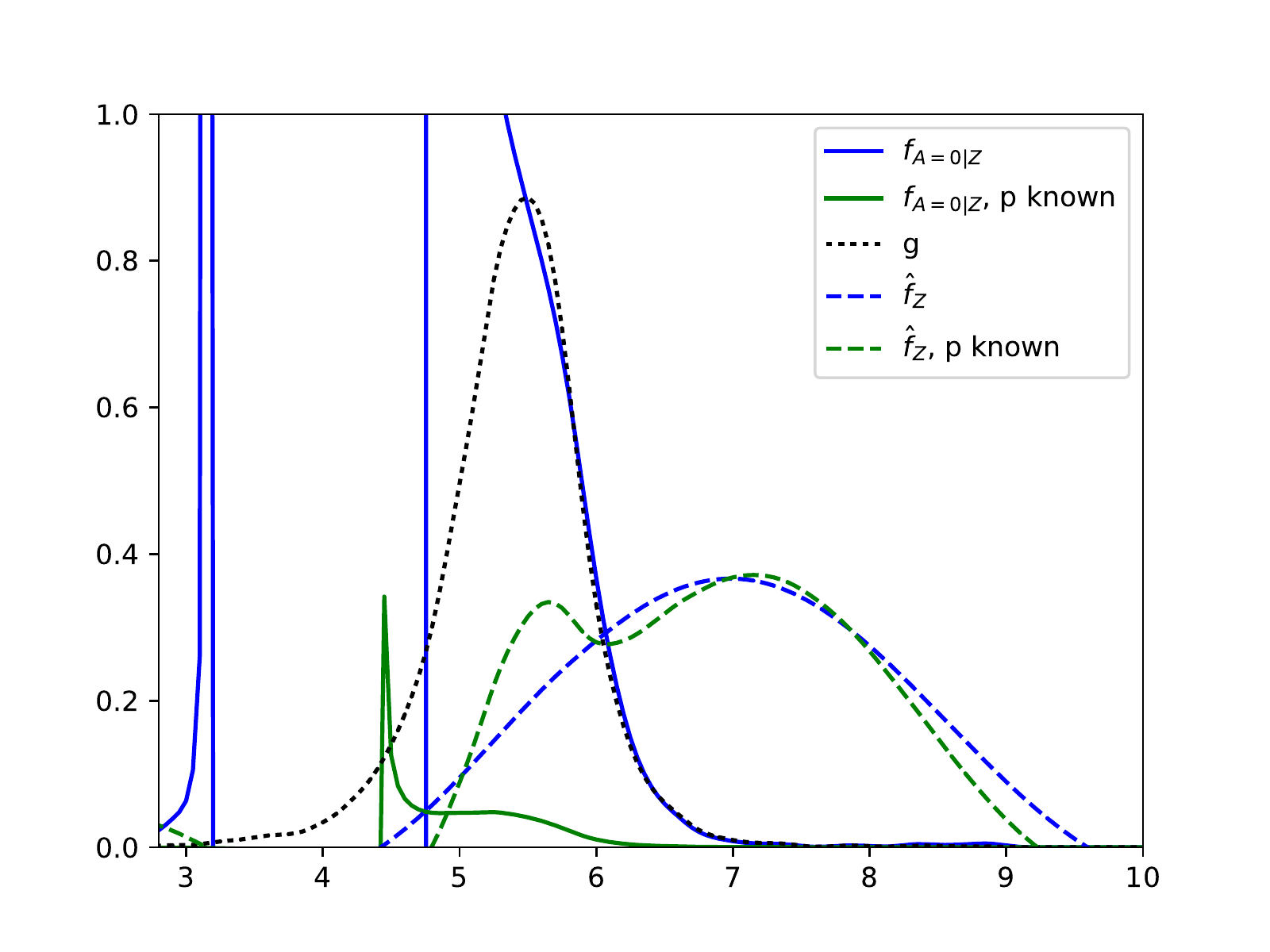}\\
(f)
\end{minipage}
\hfill
\caption{\small{Results of the Lepskii procedure on real data. (a) Histogram of the fluorescence of treated cells and estimated probability distribution for the fluorescence of un-treated cells. (b) Estimation of $p$ on an increasing sub-sample of the data (the data being permutated 5 times). (c) In red, empirical Fourier transform $\hat{\phi}_Z$ and two estimators computed respectively with the values $\hat{f}_{n,\hat{j}^f_n}$ and $\hat{p}^{(1)}_{n,\hat{j}^f_n}$ obtained with the Lepskii procedure for $f$ (in blue) and with the values obtained when $p$ is first estimated and then used in the estimation of $f$ (in green). (d) Comparison of the obtained estimators of $f$. (e) Estimated density of $Z$ computed using the two different strategies for estimating $f$ and comparison with the histogram of data. (f) Estimators of the conditional density
$f_{A=0|Z}(z)$. }}
\label{fig:test3}
\end{figure}

\paragraph{Estimation of $f$}
Next, we focus on the estimation of the density $f$, which represents here the distribution of fluorescence intensity due to the presence of CD27 on the cells' surface. We use two different estimation strategies: 
\begin{itemize} 
\item First, we apply the global Lepskii estimation procedure introduced in Section \ref{subsec:lepskif}.
\item Second, we use a similar approach but we consider that the value of $p$ is known and given by the previously estimated $\hat{p}_{n,\hat{j}_n,p}$ above. We then use the Lepskii procedure with a fixed value of $p$ as described in Section \ref{subsec:lepskif}.
\end{itemize}
In both cases we then compute an approximation of the Fourier transform $\Phi_Z$ of the mixed signal and compared it with the empirical estimator $\hat{\Phi}_{Z,n}$ in Figure \ref{fig:test3}(c). In particular, in Figure \ref{fig:test3}(c), we show several colored curves:
\begin{itemize}
\item  the red curve is the empirical estimator $\hat{\Phi}_{Z,n}$
\item  the blue dotted curve is the estimator derived from the global Lepskii procedure
\item the green dotted curve corresponds  to the modified Lepskii procedure for $f$ when $p$ is previously estimated).
 \end{itemize}
 
  We observe that the estimator of $f$ computed with $p=0.31$ seems to behave better, which is not surprising since the Lepskii procedure for $f$ does not provide any theoretical guaranty for estimating $p$ (we obtained $\hat{p}^{(1)}_{n,\hat{j}_n^f}=0.081$, which seems very low and not realistic for our biological framework).
Then we calculate the inverse transform of both estimators (see Figure \ref{fig:test3}(d) ), compute the estimated density of $Z$ and compare it with the observed histogram (Figure \ref{fig:test3}(e)). We observe first that our estimation can take negative values (due to the bandwidth parameter in the estimation) and secondly that it fails to capture the exact shape of the density, however it reproduces reasonably well the mean and the variance.

Finally, we study the  probability that a cell expresses CD27 on its surface conditionally to the value of the observed fluorescence. We compute estimators of this conditional density with the Bayes formula:
$$f_{A=0|Z}(z) = \frac{pg(z)}{pg(z)+(1-p)f\ast g(z)}$$
The results are drawn on Figure \ref{fig:test3}(f) for both estimators of $f$. Since our estimation of $f$ can take negative values and vanish, we observe artificial singularities that should be omitted for the biological interpretation. We have not pushed further our investigations from a biological point of view since the goal of our paper is mainly theoretical.

\appendix
\section{Technical results}\label{sec:appendix}
\subsection{Bernstein inequality}
We first state a classical concentration inequality that may be found  for example in \cite{BLM}.
\begin{theo}[Bernstein inequality]\label{theo:bernstein}
Let $X_1,\ldots,X_n$ be independent random variables with finite variance such that $|X_i| \leq b$ for some $b > 0$ almost surely for all $i \leq n$. Let $S=\sum_{i=1}^n X_i - \mathbb{E}[X_i]$ and $v=\sum_{i=1}^n \mathbb{E}[X_i^2]$. Then
$$
\mathbb{P}\left( |S| \geq t \right) \leq  2 \exp\left( -\frac{t^2}{2(v+bt/3)}\right).
$$
\end{theo}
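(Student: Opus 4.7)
I would follow the classical Chernoff--Cram\'er scheme: apply Markov's inequality to $e^{\lambda S}$ for a free parameter $\lambda>0$, use independence to factor the moment generating function, derive a Bernstein-type bound on the log-MGF of each summand from the almost-sure bound $|X_i|\le b$, and finally optimise the resulting exponential inequality over $\lambda$. The two-sided tail in the statement is then obtained by running the same argument for $-S$ and taking a union bound, which produces the factor $2$.

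\textbf{Implementation.} For $\lambda>0$, Markov's inequality yields
\[
\PP(S\ge t)\le e^{-\lambda t}\EE[e^{\lambda S}]=e^{-\lambda t}\prod_{i=1}^n \EE[e^{\lambda(X_i-\EE[X_i])}].
\]
The core step is a Bernstein-type bound on the cumulant generating function of each term. From $|X_i|\le b$ one has $X_i^q\le |X_i|^q\le b^{q-2}X_i^2$ for $q\ge 2$; combining with the elementary combinatorial inequality $q!\ge 2\cdot 3^{q-2}$ yields the Bernstein moment condition
\[
\EE[X_i^q]\le \frac{q!}{2}\,\EE[X_i^2]\,(b/3)^{q-2},\quad q\ge 2.
\]
Plugging this into the Taylor expansion of $e^{\lambda X_i}$, summing the resulting geometric-type series, using $\log(1+u)\le u$, and cancelling the linear term via the factor $e^{-\lambda\EE[X_i]}$ yields, for every $\lambda\in(0,3/b)$,
\[
\log \EE[e^{\lambda(X_i-\EE[X_i])}] \le \frac{\lambda^2\EE[X_i^2]/2}{1-b\lambda/3}.
\]
Summing over $i=1,\dots,n$ with $v=\sum_i\EE[X_i^2]$ gives
\[
\PP(S\ge t)\le \exp\!\left(-\lambda t+\frac{\lambda^2 v/2}{1-b\lambda/3}\right),
\]
and the explicit choice $\lambda = t/(v+bt/3)\in(0,3/b)$ minimises the exponent and delivers the one-sided bound $\exp(-t^2/(2(v+bt/3)))$. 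Applying the same argument to $-S$ (which satisfies the same hypotheses with the same $b$ and $v$) and taking a union bound introduces the factor $2$ in the final statement.

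\textbf{Main obstacle.} The delicate point is extracting the sharp constant $b/3$ in the denominator of the MGF bound. A naive approach through the centered bound $|X_i-\EE[X_i]|\le 2b$ only yields $2b/3$ and therefore a sub-optimal tail; the clean way is to work directly with the uncentered moments via the combinatorial identity $q!\ge 2\cdot 3^{q-2}$, which concentrates all the arithmetic difficulty in a single line and pushes the centering entirely to the last step through the factor $e^{-\lambda\EE[X_i]}$. Once the Bernstein MGF estimate is secured, the remainder of the proof is routine convex optimisation in $\lambda$, with no further probabilistic input required.
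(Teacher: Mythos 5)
The paper does not prove this theorem: it is stated as a classical result and a reference is given to \cite{BLM} (Theorem~2.9 and Exercise~2.8 therein), so there is no in-paper argument to compare yours against. Your proof is a correct and complete rendition of the standard Chernoff--Cram\'er/Bernstein argument: the reduction of $\EE[|X_i|^q]\le b^{q-2}\EE[X_i^2]$ to the Bernstein moment condition via $q!\ge 2\cdot 3^{q-2}$ is right (and the induction $ (q+1)!\ge (q+1)\cdot 2\cdot 3^{q-2}\ge 2\cdot 3^{q-1}$ holds for $q\ge 2$), the geometric-series summation of the log-MGF for $\lambda\in(0,3/b)$ is valid, the cancellation of the linear term via $e^{-\lambda\EE[X_i]}$ together with $1+u\le e^u$ is exactly how one keeps the sharp $b/3$, and the two-sided bound by symmetry and a union bound is standard. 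One small imprecision: the choice $\lambda=t/(v+bt/3)$ is a convenient feasible point, not the actual minimiser of $-\lambda t+\tfrac{\lambda^2 v/2}{1-\lambda b/3}$ (the true minimiser yields the slightly sharper Bennett-type bound); since you only need \emph{some} admissible $\lambda$ to obtain an upper bound, this does not affect the validity of the proof, but ``minimises'' should be weakened to ``yields the stated bound''.
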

\par
\subsection{Non adaptive estimation}
\label{app1}
In this section, we sketch  the proof of Theorem \cite{vanes2} with $\tau_n = \log(n)^{-a}$ and $a>0$ instead of the choice $\tau_n=\log(3n)^{-1}$ given in \cite{vanes2}, in order to control $\E(\|\hfjn-f\|_{_2})$.
\begin{pro}
\label{prop:mieux}
If $\Hs$ and $\Hnu$ hold with $\nu > 1$, then the choices $h_{n,j} = n^{-\frac{1}{2s_j+2\nu}}$ in \eqref{eq:defpn}, $\tau_n=\log(n)^{-a}$ in \eqref{eq:taun} and $\delta_{n,j} = n^{-1/(2s_j+2\nu+1)}$ in \eqref{eq:def_deltanl} lead to an estimator $\hfnj$ that satisfies the consistency rate:
$$
\mathbb{E} [\|\hfnj-f\|^2_2] \le C(s,R) n^{-\frac{2\min(s,s_j)}{2s_j+2\nu+1}},
$$
where $C(s,R)$ is a positive constant depending continuously in $s$ and $R$.
\end{pro}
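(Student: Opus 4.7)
The plan is to follow the non-adaptive proof of Theorem \ref{theo:estimf} from \cite{vanes2}, being careful to track separately the \emph{true} smoothness $s$ of $f$ (which controls the approximation errors) and the \emph{chosen} smoothness $s_j$ (which determines the bandwidths $h_{n,j}$ and $\dnj$, and hence governs the variance). Only the bias coming from the plug-in of $\hat{p}^{(1)}_{n,j}$ and the kernel approximation by $\PQ(\dnj\cdot)$ are sensitive to this mismatch; the deconvolution variance will depend only on $\nu$ and $s_j$.

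By Parseval and by the independence of the two half-samples used to build $\hat{p}^{(1)}_{n,j}$ and $\hat{\Phi}_Z^{(2)}$, I would decompose $\E\|\hfnj-f\|_2^2 = (2\pi)^{-1}(T_1+T_2)$ with $T_1 = \|\E\hPXnj - \PX\|_2^2$ and $T_2 = \E\|\hPXnj - \E\hPXnj\|_2^2$, and reuse the identity
\begin{equation*}
\E\hPXnj(t) - \PX(t) = \E\Bigl[\tfrac{p-\hat{p}^{(1)}_{n,j}}{1-\hat{p}^{(1)}_{n,j}}\Bigr]\PQ(\dnj t)(1-\PX(t)) + [\PQ(\dnj t)-1]\PX(t)
\end{equation*}
already used in the proof of Proposition \ref{prop:inclusion}. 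With $\PQ = \mathbf{1}_{[-1;1]}$ and $f\in\Hs$, the second summand contributes $\int_{|t|\ge \dnj^{-1}}|\PX(t)|^2 dt \le R^2 \dnj^{2s}$; since $\dnj<1$, one has $\dnj^{2s}\le \dnj^{2\min(s,s_j)}$, giving $O(n^{-2\min(s,s_j)/(2s_j+2\nu+1)})$, which matches the target rate.

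For the first summand, the truncation $\hat{p}^{(1)}_{n,j}\le 1-\tau_n$ gives $|(p-\hat{p}^{(1)}_{n,j})/(1-\hat{p}^{(1)}_{n,j})|\le (\log n)^a|p-\hat{p}^{(1)}_{n,j}|$. I would then redo the bias--variance trade-off behind Theorem \ref{theo:estimp} for bandwidth $h_{n,j}$ tailored to $s_j$ but applied to an $f$ of true smoothness $s$: the variance remains $O((nh_{n,j}^{2\nu-1})^{-1}) = O(n^{-(2s_j+1)/(2s_j+2\nu)})$, while the bias, using only $f\in\Hs$, is of order $h_{n,j}^{s+1/2}$. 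This gives
\begin{equation*}
\E[(p-\hat{p}^{(1)}_{n,j})^2] \lesssim n^{-(2\min(s,s_j)+1)/(2s_j+2\nu)}.
\end{equation*}
Combined with $\|\PQ(\dnj\cdot)(1-\PX)\|_2^2\lesssim \dnj^{-1}$, this piece contributes at most $(\log n)^{2a}\, n^{-(2\min(s,s_j)+1)/(2s_j+2\nu)+1/(2s_j+2\nu+1)}$. A direct computation shows the gap with the target exponent $-2\min(s,s_j)/(2s_j+2\nu+1)$ equals $(2\min(s,s_j)+1)/[(2s_j+2\nu)(2s_j+2\nu+1)]>0$, which comfortably absorbs the $(\log n)^{2a}$ factor for $n$ large enough.

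The variance $T_2$ is handled exactly as in the proof of Proposition \ref{prop:concentration_f}, via the split $T_{2,1}+T_{2,2}-T_{2,3}$, but taking $L^2$ norms rather than deviation estimates. The dominating contribution comes from $\hat{\Phi}_Z^{(2)}-\PZ$, and the standard Fan-type bound $\int|\PQ(\dnj t)/\PU(t)|^2\, dt\lesssim \dnj^{-(2\nu+1)}$ (which uses $g\in\Hnu$ and $\nu>1$) gives $T_2\lesssim (n\dnj^{2\nu+1})^{-1} = n^{-2s_j/(2s_j+2\nu+1)}\le n^{-2\min(s,s_j)/(2s_j+2\nu+1)}$; the two residual pieces, controlled by the MSE bound on $\hat{p}^{(1)}_{n,j}$ just obtained, are of smaller order. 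Summing the three contributions yields the stated rate, with $C(s,R)$ continuous in $s$ because the radius $R$, the kernel norm $\|\PQ\|_2$, and the $\Hnu$-constants depend continuously on $s$. The main technical obstacle is the exponent bookkeeping in the $p$-estimation piece: one must verify that the nearly-parametric rate of $\hat{p}^{(1)}_{n,j}$ is fast enough to make this contribution strictly subdominant to the sinc-kernel bias, despite the $(\log n)^{2a}$ penalty induced by the truncation.
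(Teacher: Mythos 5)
Your overall route is the same as the paper's: Parseval, bias--variance decomposition, a bias split into a kernel-approximation term and a $p$-plug-in term, and a Fan-type deconvolution bound for the variance; the paper additionally recentres the estimator around $\hat f_{n,\delta_{n,j}}$ (the estimator with the \emph{true} $p$) rather than around the mean, but the two decompositions are equivalent. Your bias analysis is essentially correct, including the observation that the plug-in contribution to the bias sits strictly below the sinc-kernel term, so the $(\log n)^{2a}$ penalty from the $\tau_n$-truncation is harmless there.

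There is, however, a genuine gap in the variance. You claim $T_2 \lesssim (n\delta_{n,j}^{2\nu+1})^{-1}$ directly from the Fan-type integral bound, but the dominant piece $T_{2,1}(t) = \frac{\PQ(\dnj t)}{\PU(t)}\frac{\hat\Phi^{(2)}_Z(t)-\PZ(t)}{1-\hat p^{(1)}_{n,j}}$ carries the extra factor $(1-\hat p^{(1)}_{n,j})^{-2}$ after squaring and taking expectations. By independence of the two half-samples, $\E\|T_{2,1}\|_2^2 = \E[(1-\hat p^{(1)}_{n,j})^{-2}]\cdot\E\int|\PQ(\dnj t)/\PU(t)|^2|\hat\Phi^{(2)}_Z(t)-\PZ(t)|^2\,dt$, and the naive truncation bound only gives $\E[(1-\hat p^{(1)}_{n,j})^{-2}]\le \tau_n^{-2}=(\log n)^{2a}$. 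Since the second factor already realises the target exponent $n^{-2s_j/(2s_j+2\nu+1)}$, this would leave a residual $(\log n)^{2a}$ in front of the dominating term — fatal whenever $s_j\le s$, because then this \emph{is} $n^{-2\min(s,s_j)/(2s_j+2\nu+1)}$ and the proposition demands an $n$-free constant $C(s,R)$. This is precisely the obstacle the paper's Lemma~\ref{lem:A2} is designed to remove, by splitting on the event $\{|\hat p^{(1)}_{n,j}-p|\le\Psi_n\}$ for a polynomially small $\Psi_n$: on that event $(1-\hat p^{(1)}_{n,j})^{-2}\le(1-p-\Psi_n)^{-2}$ is genuinely bounded, while the complement has exponentially small probability that absorbs the $(\log n)^{2a}$. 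The same effect can be achieved by writing $(1-\hat p^{(1)}_{n,j})^{-1}=(1-p)^{-1}+[(1-\hat p^{(1)}_{n,j})^{-1}-(1-p)^{-1}]$ and using your own MSE bound on $\hat p^{(1)}_{n,j}$ to show the correction is $o(1)$. Without some such step, the asserted $L^2$ bound on $T_{2,1}$ does not follow from what you have written; your last paragraph explicitly flags the $(\log n)^{2a}$ problem for the bias, but the place where it actually bites in this proposition is this variance term.
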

\begin{proof}
The bias-variance decomposition and Fubini's Theorem yield
$$\mathbb{E} [\|\hfnj-f\|^2_2]\le \int_{-\infty}^{\infty} \left(\E(\hfnj(t)-f(t))\right)^2 dt+\int_{-\infty}^{\infty} \VV(\hfnj(t)) dt
$$
\paragraph{Integrated bias}
In this section, we introduce more explicit notations, we set 
$\hat{f}_{n,h_{n,j},\delta_{n,j}}=\hfnj$ defined by \eqref{eq:hatf} and set $\hat{f}_{n,\delta_{n,j}}$ the estimator of $f$ when $p$ is known :
$$\hat{f}_{n,\delta_{n,j}}=\frac{1}{2\pi}\int e^{-itx}\frac{\hpz{2}(t)-p \PU(t)}{(1-p)\PU(t)}\PQ(\delta_{n,j}t) dt.
$$
We also define
$$\tilde{f}_{n,\delta_{n,j}}=\frac{1}{2\pi}\int e^{-itx}\frac{\hpz{2}(t) }{\PU(t)}\PQ(\delta_{n,j}t) dt.
$$
Then
\bea
\int\left(\E(\hfnj-f)\right)^2 &= \int\left(\E(\hat{f}_{n,h_{n,j},\delta_{n,j}}-f)\right)^2\\
& \le 2 \int\left(\E(\hat{f}_{n,h_{n,j},\delta_{n,j}}-\hat{f}_{n,\delta_{n,j}})\right)^2 +2\int\left(\E(\hat{f}_{n,\delta_{n,j}}-f)\right)^2
\eea
Using the Parseval inequality, then assumptions \eqref{eq:kernel_f} and $f\in\Hs$ we deduce that the second term behaves like:
\bea
\int\left(\E(\hat{f}_{n,\delta_{n,j}}-f)\right)^2
&\le\frac{1}{2\pi}\int_{-\infty}^{\infty} |\Phi_f(t)|^2 \frac{|\PQ(\delta_{n,j}t)-1|^2}{(\delta_{n,j}t)^{2s}}(\delta_{n,j}t)^{2s} dt,\\
&\le MR\delta_{n,j}^{2s}.
\eea
Then, for the first term we obtain that:
\bea
\int\left(\E(\hat{f}_{n,h_{n,j},\delta_{n,j}}-\hat{f}_{n,\delta_{n,j}})\right)^2
&\le 2\int\E\left[ \tilde{f}_{n,\delta_{n,j}}\frac{(\hpnj^{(1)}-p)}{(1-p)(1-\hpnj^{(1)})} \right]^2 \\
&\hspace{2cm}+2\E\left[\frac{(\hpnj^{(1)}-p)}{(1-p)(1-\hpnj^{(1)})} \right]^2\int \left(\frac{1}{\delta_{n,j}}Q(t\delta_{n,j}^{-1} ) \right)^2 dt\\
&\le 2\E\left[\frac{(\hpnj^{(1)}-p)}{(1-p)(1-\hpnj^{(1)})} \right]^2 \int\E\left[ \tilde{f}_{n,\delta_{n,j}}\right]^2\\
&\hspace{2cm}+2\E\left[\frac{(\hpnj^{(1)}-p)}{(1-p)(1-\hpnj^{(1)})} \right]^2\int \left(\frac{1}{\delta_{n,j}}Q(t\delta_{n,j}^{-1} ) \right)^2 dt
\eea
Assumptions \eqref{eq:kernel_f} on the kernel $Q$ and the Parseval inequality lead to:
$$\int_{-\infty}^{\infty} \left(\frac{1}{\delta_{n,j}}Q(t\delta_{n,j}^{-1} ) \right)^2 dt =\delta_{n,j}^{-1}\int_{-1}^{1} Q(t)^2dt<\infty.
$$
Moreover, we use a concentration inequality similar to Proposition \ref{prop:concentration_p} stated in Lemma \ref{lem:A2}.
It remains to compute 
\bea
\int \E[\tilde{f}_{n\delta_{n,j}}]^2=\int \VV(\tilde{f}_{n\delta_{n,j}})+\int \E[\tilde{f}_{n\delta_{n,j}}^2].
\eea
From the definition of $\tilde{f}_{n\delta_{n,j}}$, we remark that 
\bea
\tilde{f}_{n\delta_{n,j}}(x)=\frac{1}{\delta_{n,j}(n-n')}\sum_{k=n'}^n W_{n,j}\left(\frac{x-Z_k}{\delta_{n,j}}\right)
\eea where $W_{n,j}(x)=\int_{-1}^1 e^{-itx}\PQ(t)\PU(t\delta_{n,j}^{-1})^{-1}dt$.
Then 
$$\int \VV(\tilde{f}_{n\delta_{n,j}})\le \frac{1}{\delta_{n,j}^2(n-n')}\int_{-\infty}^{\infty}\E\left( W_{n,j}\left( \frac{x-Z}{\delta_{n,j}} \right)^2 \right)dx$$
A straightforward computation using that $g\in\Hnu$ leads to:
\bea
\int_{-\infty}^{\infty}\E\left( W_{n,j}\left( \frac{x-Z}{\delta_{n,j}} \right)^2 \right)dx
&\le \frac{4\delta_{n,j}^{1-2\nu}}{d_2}\int_{-1}^{1}\lvert\PQ\lvert^2(\delta_{n,j}^{\nu}+t^{\nu})^2 dt
\eea
Thus for some constant $C$:
\be
\int \VV(\tilde{f}_{n\delta_{n,j}})\le C\delta_{n,j}^{-1-2\nu}(n-n')^{-1},
\ee
which entails as $s_j>0$ that $\VV(\tilde{f}_{n\delta_{n,j}})\to0$.\\
It remains to bound
$\int \E[\tilde{f}_{n,\delta_{n,j}}^2]
$. The Parseval inequality and the fact that $|\PX|\le1$ yields:
\be
\int \E[\tilde[f_{n,\delta_{n,j}}^2]\le \delta_{n,j}^{-1}\int_{-1}^1|\PQ(t)|^2dt.
\ee
Therefore, combining the previous inequality we deduce that:
\bea
\int\left(\E(\hfnj-f)\right)^2 
&\le 2MR\delta_{n,j}^{2s} +4C(s,R)(n-n')^{-\frac{2\min(s,s_j)+1}{2s_j+2\nu}}\\
&\hspace{1cm}\times \left( \delta_{n,j}^{-1}\int Q(t)^2dt+ C\delta_{n,j}^{-1-2\nu}(n-n')^{-1}+ \delta_{n,j}^{-1}\int_{-1}^1|\PQ(t)|^2dt\right)
\eea
The second term of the r.h.s. is of order $h_{n,j}^{2\min(s,s_j)+1}\delta_{n,j}^{-1}$ which is negligible before $\delta_{n,j}^{2\min(s,s_j)}$. This entails that:
\ben
\int\left(\E(\hfnj-f)\right)^2 
\le C(s,R)(n)n^{-\frac{2\min(s,s_j)}{2s_j+2 \nu+1}}.
\een
\paragraph{Integrated variance}
Let us first remark that:
\bea
\int_{-\infty}^{\infty}\VV(\hfnj)
&=\int_{-\infty}^{\infty}\VV(\hat{f}_{n,h_{n,j},\delta_{n,j}})\\
&\le 2\int_{-\infty}^{\infty}\VV(\hat{f}_{n,h_{n,j}}) +2\int_{-\infty}^{\infty}\VV(\hat{f}_{n,h_{n,j},\delta_{n,j}}-\hat{f}_{n,h_{n,j}}).
\eea
We have proved above that 
\be
2\int_{-\infty}^{\infty}\VV(\hat{f}_{n,h_{n,j}}) \le C\delta_{n,j}^{-1-2\nu}(n-n')^{-1}.
\ee
It remains to consider the second term.
We use once again an auxiliary sequence $\Psi_n \to0$:
\bea
\int_{-\infty}^{\infty}\VV(\hat{f}_{n,h_{n,j},\delta_{n,j}}-\hat{f}_{n,\delta_{n,j}}) 
&\le \int_{-\infty}^{\infty}\E[(\hat{f}_{n,h_{n,j},\delta_{n,j}}-\hat{f}_{n,\delta_{n,j}})^2]\\
&\le \int_{-\infty}^{\infty}\E[(\hat{f}_{n,h_{n,j},\delta_{n,j}}-\hat{f}_{n,\delta_{n,j}})^2\un_{|\hpnj-p|>\Psi_n}]\\
&\quad+\int_{-\infty}^{\infty}\E[(\hat{f}_{n,h_{n,j},\delta_{n,j}}-\hat{f}_{n,\delta_{n,j}})^2\un_{|\hpnj-p|\le \Psi_n}].\\
\eea
Similarly as for the integrated bias we obtain that:
\bea
\int_{-\infty}^{\infty}\E[(\hat{f}_{n,h_{n,j},\delta_{n,j}}-\hat{f}_{n,\delta_{n,j}})^2\un_{|\hpnj-p|>\Psi_n}]
&\le 2\int\E\left[\frac{(\hpnj^{(1)}-p)^2}{(1-p)^2(1-\hpnj^{(1)})^2} \tilde{f}_{n,\delta_{n,j}}^2\un_{|\hpnj^{(1)}-p|>\Psi_n}\right]\\
&\quad +2\E\left[\frac{(\hpnj^{(1)}-p)^2}{(1-p)^2(1-\hpnj^{(1)})^2}\un_{|\hpnj^{(1)}-p|>\Psi_n} \right]\int \left(\frac{1}{\delta_{n,j}}Q(t\delta_{n,j}^{-1} )\right)^2  dt.\\
\eea
The second term of the r.h.s. is bounded by
$$2\delta_{n,j}^{-1}\frac{2(1-\tau_n)^2+2p^2}{\tau_n^2(1-p)}\left(\int_{-1}^1Q(t)dt\right)\Pro(|\hpnj^{(1)}-p|>\Psi_n).$$
We prove below in \eqref{app:concentr_hp1} that $\Pro(|\hpnj^{(1)}-p|>\Psi_n)$ decreases exponentially fast to $0$ when 
$\psi_n$ is chosen as $\psi_n =n^{-1/(2(2s_{\max}+2\nu))}.$
Thus, the previous term is $o\left(n^{-\frac{2\min(s,s_j)}{2s_j+2\nu+1}}\right)$.

\noindent
Concerning the other term, we apply the Fubini theorem and obtain that:
\bea
\int\E\left[\frac{(\hpnj^{(1)}-p)^2}{(1-p)^2(1-\hpnj^{(1)})^2} \tilde{f}_{n,\delta_{n,j}}^2\un_{|\hpnj^{(1)}-p|>\Psi_n}\right] 
&= \E\left[\frac{(\hpnj^{(1)}-p)^2}{(1-p)^2(1-\hpnj^{(1)})^2} \un_{|\hpnj^{(1)}-p|>\Psi_n}\int\tilde{f}_{n,\delta_{n,j}}^2\right]\\
&=\E\left[\frac{(\hpnj^{(1)}-p)^2}{(1-p)^2(1-\hpnj^{(1)})^2} \un_{|\hpnj^{(1)}-p|>\Psi_n}\int\frac{|\hPZ(t)\PQ(\delta_{n,j}t)|^2}{|\PU(t)|^2}dt\right]\\
&\le \frac{2+2p^2}{\tau_n^2(1-p)^2}\frac{4\delta_{n,j}^{-1-2\nu}}{d_2^2}\int_{-1}^1[\PQ(t)|^2(1+t^\nu)^2dt \Pro(|\hpnj^{(1)}-p|>\Psi_n).
\eea
Hence, we deduce that similarly
$$ \int_{-\infty}^{\infty}\E[(\hat{f}_{n,h_{n,j},\delta_{n,j}}-\hat{f}_{n,\delta_{n,j}})^2\un_{|\hpnj^{(1)}-p|>\Psi_n} \le o\left(n^{-\frac{2\min(s,s_j)}{2s_j+2\nu+1}}\right).$$
We finally consider the event where $|\hpnj^{(1)}-p|\le\Psi_n$ and use the independence between $\hpnj^{(1)}$ and $\tilde{f}_{n,\delta_{n,j}}$ to obtain
\bea
\int_{-\infty}^{\infty}\E[(\hat{f}_{n,h_{n,j},\delta_{n,j}}-\hat{f}_{n,\delta_{n,j}})^2\un_{|\hpnj^{(1)}-p|\le\Psi_n}]
&\le 2\int\E\left[\frac{(\hpnj^{(1)}-p)^2}{(1-p)^2(1-\hpnj^{(1)})^2}\un_{|\hpnj^{(1)}-p|\le\Psi_n}\right]\E\left[ \tilde{f}_{n,\delta_{n,j}}^2\right]\\
&\quad +2\E\left[\frac{(\hpnj^{(1)}-p)^2}{(1-p)^2(1-\hpnj^{(1)})^2}\un_{|\hpnj^{(1)}-p|\le\Psi_n} \right]\int \left(\frac{1}{\delta_{n,j}}Q(t\delta_{n,j}^{-1} )\right)^2  dt.\\
\eea
\end{proof}
From Lemma \ref{lem:A2} and an argument similar to the one used for the integrated bias, we conclude that this last term is equal to $O\left(n^{-\frac{2\min(s,s_j)}{2s_j+2\nu+1}}\right)$, which concludes the proof of Proposition \ref{prop:mieux}. \hfill$\square$

\begin{lem} Assume $s_j\ge1/2$ and that $f\in\Hs$, $g\in \Hnu$, with the choices of $h_{n,j}=n^{-1/(2s_j+2\nu)}$ and $\tau_n= \log(n)^{-a}$ then
\label{lem:A2}
\be\E\left[\frac{(\hpnj^{(1)}-p)}{(1-p)(1-\hpnj^{(1)})} \right]^2
\le C(s,R)n^{-\frac{2\min(s,s_j)+1}{2s_j+2\nu}}.
\ee
\end{lem}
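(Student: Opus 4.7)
\prooff{Plan for Lemma \ref{lem:A2}}
The plan is to split the expectation according to whether $\hpnj^{(1)}$ lies close to $p$ or not, so that on a ``good'' event one controls the denominator $(1-\hpnj^{(1)})$ by a constant, while on the complementary event one falls back on the truncation $1-\hpnj^{(1)}\ge \tau_n$ after checking that this event has super-polynomially small probability. Concretely, set $\eta := (1-p)/2$ and $\mathcal{A}:=\{|\hpnj^{(1)}-p|\le \eta\}$. Then I would write
\begin{equation*}
\E\!\left[\frac{(\hpnj^{(1)}-p)^2}{(1-p)^2(1-\hpnj^{(1)})^2}\right]
\le \frac{4}{(1-p)^4}\,\E\!\left[(\hpnj^{(1)}-p)^2 \un_{\mathcal{A}}\right]
+\frac{1}{(1-p)^2\tau_n^2}\,\PP(\mathcal{A}^c),
\end{equation*}
using on $\mathcal{A}$ the lower bound $1-\hpnj^{(1)}\ge (1-p)/2$, and on $\mathcal{A}^c$ the crude bound $(\hpnj^{(1)}-p)^2\le 1$ together with $1-\hpnj^{(1)}\ge \tau_n$.

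For the first term I would reuse the bias--variance analysis carried out in Section \ref{subsec:non_ada_p}. Since $p\in[0,1-\tau_n]$ for $n$ large enough, the truncation step is a projection onto a convex set containing $p$ and cannot increase the squared error, so it suffices to bound the MSE of the untruncated kernel estimator. The bias bound \eqref{eq:biais} depends only on the true smoothness $s$ of $f$ (it invokes $f\in\Hs$ together with a Cauchy--Schwarz inequality, irrespective of the bandwidth) and yields bias $\lesssim h_{n,j}^{s+1/2}$, while the variance bound \eqref{eq:var} depends only on $\nu$ and gives variance $\lesssim (nh_{n,j}^{2\nu-1})^{-1}$. Plugging $h_{n,j}=n^{-1/(2s_j+2\nu)}$ produces bias$^{2}\lesssim n^{-(2s+1)/(2s_j+2\nu)}$ and variance $\lesssim n^{-(2s_j+1)/(2s_j+2\nu)}$, and therefore
\begin{equation*}
\E[(\hpnj^{(1)}-p)^2] \;\lesssim\; n^{-(2\min(s,s_j)+1)/(2s_j+2\nu)},
\end{equation*}
which is exactly the rate announced in the lemma.

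For the second term I would show that $\PP(\mathcal{A}^c)$ is exponentially small in $n$, which easily absorbs the $\tau_n^{-2}=\log(n)^{2a}$ factor. Since the bias tends to $0$, $\mathcal{A}^c$ is contained for $n$ large in $\{|\hat p_{n,j}^{(1),\mathrm{untr}} - \E \hat p_{n,j}^{(1),\mathrm{untr}}|\ge \eta/2\}$, and I would then apply Theorem \ref{theo:bernstein} to the i.i.d.\ centered variables $\xi_{k,j}$ introduced in the proof of Proposition \ref{prop:concentration_p}, which satisfy $|\xi_{k,j}|\le Ch_{n,j}^{-\nu}$ and $\VV(\xi_{k,j})\le h_{n,j}^{-2\nu+1}$. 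With the constant threshold $\eta/2$ the Bernstein bound gives
\begin{equation*}
\PP(\mathcal{A}^c)\;\lesssim\; \exp\!\left(-c\, n h_{n,j}^{2\nu-1}\right)\;=\;\exp\!\left(-c\, n^{(2s_j+1)/(2s_j+2\nu)}\right),
\end{equation*}
so that $\tau_n^{-2}\PP(\mathcal{A}^c)=o(n^{-k})$ for every $k\ge 0$. Combining the two contributions yields the desired bound.

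The only technical delicacy is checking that the bias--variance computation from Section \ref{subsec:non_ada_p}, originally displayed with the matched choice $s_j=s$, remains correct in the mismatched case $s_j\neq s$: the bias estimate keeps the exponent $s+1/2$ (it is the regularity of $f$ that enters the Cauchy--Schwarz step), whereas the variance involves only the bandwidth and $\nu$; it is precisely this asymmetry that produces the $\min(s,s_j)$ in the final exponent. Apart from that point, the argument is routine bookkeeping of the truncation and an application of Bernstein's inequality on a constant-size deviation event.
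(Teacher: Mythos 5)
Your proof is correct and follows essentially the same approach as the paper: both split on an event where $1-\hpnj^{(1)}$ is bounded below, bound the MSE $\E[(\hpnj^{(1)}-p)^2]$ there via the mismatched-bandwidth bias--variance analysis (bias exponent governed by the true $s$, variance by $s_j$ and $\nu$, giving the $\min(s,s_j)$), and show the complementary event has probability small enough, after multiplying by $\tau_n^{-2}$, to be negligible, via Bernstein applied to the $\xi_{k,j}$. The only cosmetic difference is that you use the constant threshold $(1-p)/2$ together with the non-expansiveness of the truncation projection onto $[0,1-\tau_n]$, whereas the paper takes a vanishing threshold $\Psi_n=n^{-1/(2(2s_{\max}+2\nu))}$ and handles the truncation by a separate triangle-inequality step.
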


\begin{proof}
Let us first remark that using the Cauchy-Schwarz inequality and a truncation strategy, we have that:
\bea
\E\left[\frac{(\hpnj^{(1)}-p)}{(1-p)(1-\hpnj^{(1)})} \right]^2 \le \frac{1}{\tau_n^2(1-p)^2}\E[(\hpnj^{(1)}-p)^2].
\eea
Moreover using \eqref{eq:biais} and \eqref{eq:var} 
\bea
\E[(\hpnj^{(1)}-p)^2] \le 2C_sR^2h_{n,j}^{2s+1} +\frac{cte(s)}{n h_{n,j}^{2\nu-1}},
\eea
where $cte(s)$ is a constant which depend continuously in $s$. Therefore, with our choice of $h_{n,j}$ in \eqref{def:hnj} we deduce that there exist a constant $\phi(s,R)$ depending continuously on $s$ an d $R$ such that
\be
\E[(\hpnj^{(1)}-p)^2] \le \phi(s,R)n^{-\frac{2\min(s,s_j)+1}{2s_j+2_nu}},
\ee
and thus
\bea
\E\left[\frac{(\hpnj^{(1)}-p)}{(1-p)(1-\hpnj^{(1)})} \right]^2 \le \frac{1}{\tau_n^2(1-p)^2}\phi(s,R)n^{-\frac{2\min(s,s_j)+1}{2s_j+2_nu}}.
\eea
With this simple reasoning our upper bound depend on the truncation $\tau_n$. Therefore, we will refine the result by using an auxiliary sequence $\Psi_n\to0$ and split the events into two sub-cases:
\bea
\E\left[\frac{(\hpnj^{(1)}-p)}{(1-p)(1-\hpnj^{(1)})} \right]^2
&\le \E\left[\frac{(\hpnj^{(1)}-p)^2}{(1-p)^2(1-\hpnj^{(1)})^2} \right]\\
&\le \E\left[\frac{(\hpnj^{(1)}-p)^2}{(1-p)^2(1-\hpnj^{(1)})^2} \un_{\lvert \hpnj^{(1)}-p\lvert \le \Psi_n}\right]+ \E\left[\frac{(\hpnj^{(1)}-p)^2}{(1-p)^2(1-\hpnj^{(1)})^2} \un_{\lvert \hpnj^{(1)}-p\lvert >\Psi_n}\right]\\
&\le \frac{1}{(1-p)^2(1-p-\Psi_n)^2}\phi(s,R)n^{-\frac{2\min(s,s_j)+1}{2s_j+2_nu}} +\frac{1}{(1-p)^2\tau_n^2}\Pro(\lvert \hpnj^{(1)}-p\lvert >\Psi_n).
\eea
In the following, our aim is to calibrate the sequence $\Psi_n$ such that the second term is negligible comparing to the first one. We have studied concentration inequalities in Section \ref{subsec:concentration_p} for a non truncated version of $\hpjn^{(1)}$, that we denote here $\tilde{p}_{n,j}^{(1)}$.
Then
\bea
\Pro(\lvert \hpnj^{(1)}-p\lvert >\Psi_n)
&\le \Pro(\lvert \hpnj^{(1)}-\tilde{p}_{n,j}^{(1)}\lvert >\Psi_n)+\Pro(\lvert \tilde{p}_{n,j}^{(1)}-p\lvert >\Psi_n)\\
&\le \E(\un_{\tau_n>\Psi_n/2}\un_{\tilde{p}_{n,j}^{(1)}>1-\tau_n})+\Pro(\lvert \tilde{p}_{n,j}^{(1)}-p\lvert >\Psi_n/2),\\
& \le \E\Bigl(\un_{\tau_n>\Psi_n/2}\un_{\tilde{p}_{n,j}^{(1)}>1-\tau_n}\Bigr)+
\Pro\left( \left|\frac{1}{n}\sum_{k=1}^n\xi_{k,j} \right| >\Psi_n/4\right)\quad+\Pro( C_{s} R h_{n,l}^{s+1/2}>\Psi_n/4).
\eea
where the random variables $\xi_{k,j}$ are defined in \eqref{eq:step1}
Therefore 
\bea
\Pro(\lvert \hpnj^{(1)}-p\lvert >\Psi_n)
&\le \Pro(\lvert \hpnj^{(1)}-\tilde{p}_{n,j}\lvert >\Psi_n)+\Pro(\lvert \tilde{p}_{n,j}^{(1)}-p\lvert >\Psi_n)\\
&\le \E(\un_{\tau_n>\Psi_n/2}\un_{\tilde{p}_{n,j}^{(1)}>1-\tau_n})+\Pro(\lvert \tilde{p}_{n,j}^{(1)}-p\lvert >\Psi_n/2),\\
& \le \E\Bigl(\un_{\tau_n>\Psi_n/2}\un_{\tilde{p}_{n,j}^{(1)}>1-\tau_n}\Bigr)+
\Pro\left( \left|\frac{1}{n-n'}\sum_{k=n'}^n\xi_{k,j} \right| >\Psi_n/4\right)\\
&\quad+\Pro( C_{s} R h_{n,l}^{s+1/2}>\Psi_n/4).
\eea Using the Bernstein inequality, we then have:
\bea
\Pro(\lvert \hpnj^{(1)}-p\lvert >\Psi_n)
&\le \un_{\tau_n>\Psi_n/2}\Pro\Bigl(\tilde{p}_{n,j}^{(1)}>1-\tau_n\Bigr)+
 \exp \left( - \frac{(n-n') \Psi_n^2}{8 \left( C' h_{n,l}^{-2\nu+1}+ C h_{n,l}^{-\nu} \Psi_n/3)\right)}\right)\\
 &\quad+\un_{ C_{s} R h_{n,l}^{s+1/2}>\Psi_n/4}.
\eea
We choose $\Psi_n=n^{-1/2(2s_{max}+2\nu)}$ such that the last term of the r.h.s. is always null. Thus, it remains to consider $\Pro(\tilde{p}_{n,j}>1-\tau_n)$. Applying similarly Bernstein inequality leads to:
\bea
\Pro(\tilde{p}_{n,j}^{(1)}>1-\tau_n)\le \Pro(\lvert\tilde{p}_{n,j}^{(1)}-p\lvert>1-\tau_n-p)\le  \exp \left( - \frac{(n-n') (1-\tau_n-p)^2}{8 \left( C' h_{n,l}^{-2\nu+1}+ C h_{n,l}^{-\nu} (1-\tau_n-p)/3)\right)}\right)
\eea
The two previous inequalities yield:
\bean
\label{app:concentr_hp1}
\Pro(\lvert \hpnj^{(1)}-p\lvert >\Psi_n)
&\le \exp \left( - \frac{(n-n') (1-\tau_n-p)^2}{8 \left( C' h_{n,l}^{-2\nu+1}+ C h_{n,l}^{-\nu} (1-\tau_n-p)/3)\right)}\right).\\
&\quad+
 \exp \left( - \frac{(n-n') \Psi_n^2}{8 \left( C' h_{n,l}^{-2\nu+1}+ C h_{n,l}^{-\nu} \Psi_n/3)\right)}\right)
\eean
It is clear that $\tau_n^{-2}\Pro(\lvert \hpnj^{(1)}-p\lvert >\Psi_n) = o \left( n^{-1/2(2s_{max}+2\nu)} \right)$, which concludes the proof. \hfill $\square$
\end{proof}
\par

 \bibliographystyle{alpha}
\bibliography{CGR}
\vskip2cm
\hskip70mm\box5

\end{document}